\documentclass{amsart}
 \usepackage{amsmath}
\usepackage{amssymb}
\usepackage{amsfonts}
 \usepackage{eucal}
      \makeatletter
     \def\section{\@startsection{section}{1}%
     \z@{.7\linespacing\@plus\linespacing}{.5\linespacing}%
     {\bfseries
     \centering
     }}
     \def\@secnumfont{\bfseries}
     \makeatother
\setlength{\oddsidemargin}{0.5in}
 \setlength{\topmargin}{-0.3in}

\setlength{\textwidth}{5.2in}
\setlength{\textheight}{8.9in}
\usepackage{a4wide}
\newtheorem{theorem}{Theorem}[section]
\newtheorem{lemma}[theorem]{Lemma}
\newtheorem{corollary}[theorem]{Corollary}
\newtheorem{proposition}[theorem]{Proposition}

\theoremstyle{definition}

\theoremstyle{remark}
\newtheorem{remark}[theorem]{Remark}

\numberwithin{equation}{section}

\def \a{{\alpha}}
\def \b{{\beta}}
\def \D{{\Delta}}

\def \d{{\delta}}
\def \e{{\varepsilon}}

\def \g{{\gamma}}

\def \l{{\lambda}}

\def \o{{\omega}}
\def \O{{\Omega}}
\def \p{{\varphi}}
\def \t{{\vartheta}}

\def \m{{\mu}}
\def \s{{\sigma}}

\def \N{{\bf N}}

\def \P{{\bf P}}

\def \qq{{\qquad}}
\def \R{{\bf R}}

\def \T{{\bf T}}

\def \uc{{\underline{c}}}

\def \Z{{\bf Z}}

\def \dd{{\rm d}}

\def \noi{{\noindent}}


\def \T{{\mathbb T}}

\def\P{{\mathbb P}}

\def\R{{\mathbb R}}

\def\Z{{\mathbb Z}}

\def\N{{\mathbb N}}

 \font\gum= cmbx10 at 10 pt

at  8 pt
\scrollmode

\font\gsec= cmb10 at 10 pt
\hfuzz =0pt 
   at 10,4  pt at  8,2 pt
  \scrollmode
\font\gsec= cmb10 at 9,8  pt
 at 8,2  pt
 
 

\begin{document}

  \title[Convergence of series of dilated functions]{\gum  An arithmetical approach  to the convergence problem  of series of dilated
 functions and its connection with the Riemann Zeta function} 
  \author{ Michel J.\,G.  WEBER}
 \address{Michel Weber: IRMA, 10  rue du G\'en\'eral Zimmer, 67084
 Strasbourg Cedex, France}
  \email{michel.weber@math.unistra.fr \!}

\keywords{Systems of dilated functions, series, decomposition of squared sums, FC sets, gcd,
arithmetical functions, Dirichlet convolution, $\O$-theorem, Riemann Zeta-function, mean convergence, almost everywhere convergence.}

 \begin{abstract}
Given a periodic function $f$, we study the convergence almost everywhere and in 
norm   of the series  $\sum_{k} c_k f(kx)$. Let  $f(x)= 
\sum_{m=1}^\infty a_m \sin {2\pi m x}$ where $\sum_{m=1}^\infty  a_{m }^2d(m) <\infty$ and $d(m)=\sum_{d|m} 1$, and let
$f_n(x) = f(nx)$. We show  by using a new
decomposition of squared sums that for any $K\subset \N$ finite,    
$ \|\sum_{k\in K} c_k f_k \|_2^2    \le    ( \sum_{m=1}^\infty a_{m }^2 d(m)
 )    \sum_{k\in K }   c_{k}^2d(k^2)$. If $f^s (x)= \sum_{j=1}^\infty \frac{\sin 2\pi jx}{j^s}$, 
 $s>1/2$,   by only using
  elementary  Dirichlet   convolution calculus, we show that for $0< \e\le 2s-1$,
$\zeta(2s)^{-1} \|\sum_{k\in K}  c_k f^s_k \|_2^2   \le  \frac{1+\e}{\e }  (\sum_{k \in K}
  |c_k|^2  \s_{ 1+\e-2s}(k)  )$, where $\s_h(n)=\sum_{d|n}d^h$. 
From this we deduce that  if $f\in {\rm BV}(\T)$,
$\langle f,1\rangle=0$ and  
    
$$\sum_{k}  c_k^2\frac{(\log\log k)^4}{(\log\log \log k)^2}  <\infty,$$ then the series $ \sum_{k } c_kf_k$ converges almost
everywhere. 
 This   slightly improves a recent result, depending on a fine analysis on the polydisc
(\cite{ABS}, th.3) ($n_k=k$), where it was assumed that  
$    \sum_{k} c_k^2 \, (\log\log k)^\g     
$
converges for some $\g>4$. We further show that the same conclusion holds under the arithmetical condition
$$\sum_{  k }
   c_k^2  (\log\log k)^{2 + b} \s_{ -1+\frac{1}{(\log\log k)^{  b/3}}  }(k)      <\infty,$$ for some $b>0$,    or if $ \sum_{ k} c_k^2  d(k^2) (\log\log
k)^2   <\infty$. We  also derive from a recent result of Hilberdink an $\O$-result for the Riemann Zeta function involving factor closed sets. As an
application we find that simple conditions on $T$ and $\nu$ ensuring that for any   $\s>1/2$, $0\le \e <\s$, we have 
  \begin{eqnarray*}
    \max_{1\le t\le T}  |\zeta(\s + it)| 
   &  \ge & C(\s) \big(
\frac{1}{ \s_{-2\e} (\nu)}\sum_{n|\nu}\frac{\s_{-s+\e}( n)^2}{n^{2\e}} \big)^{1/2}
    ,
\end{eqnarray*}
 We finally prove an important  complementary result to Wintner's  famous characterization of mean convergence of series
$\sum_{k=0}^\infty c_k f_k $.    
\end{abstract}

\maketitle


\section{\bf Introduction}\label{s1}
Given a periodic function $f$ and an increasing sequence $\mathcal N= \{n_k,k\ge 1\} $ of positive integers, one can formally
define  the series 
$\sum_{k=1}^\infty c_k f(n_kx)$ and ask    under which conditions     this series  converges     in
norm or almost everywhere, for instance for any real coefficient sequence $\uc=\{c_k,k\ge 1\}\in \ell^2(\N)$. This is one of the oldest
and   most central problems  in the theory of systems of dilated sums.  We only  briefly outline the kind of results
obtained.   First studies were made  at the beginning of the last century
 (see Jerosch and Weyl \cite{JW} where a.e. convergence is obtained under growth conditions on coefficients and Fourier coefficients of $f$), parallel to similar ones for the
trigonometrical system. This partly  explains why  until Carleson's famous proof of Lusin's hypothesis,   the results obtained 
essentially concerned  functions with slowly growing modulus or integral modulus of continuity and/or sequences
$\mathcal N$ verifying the classical Hadamard gap condition: ${n_{k+1}}/{n_k}\ge q>1$ for all $k$. Carleson's result triggered a new
interest, permitting substancial  progresses in this direction,  under the main impulse of Russian analysts, among them Gaposhkin and
later by Berkes. We refer to
\cite{BW1} for more details and references. Then
the attention to these  problems  
  declined until very recently where there is a renewed activity, notably concerning their connection with some questions ($\O$-results) on the Riemann Zeta function.  \vskip 2 pt  In analogy with parallel questions concerning
   partial sums
$\sum_{k=1}^n   f(kx)$,
$n=1,2,\ldots$, strong law of large numbers,   studied by G\'al, Koksma (see also \cite{BW2}), 
and  law of the iterated logarithm,  central limit theorem, invariance principle, much explored by Erd\"os,    Berkes and
Philipp,  and Gaposhkin 
   notably,
recent works show that the arithmetical nature of the support of the coefficient sequence, as well as the analytic nature of $f$, interact in a
complex way in the study of the convergence   almost everywhere and in norm   of these series.  The part of the theory devoted to
individual results, namely the search of convergence conditions linking $f$, $\mathcal N$ and $\uc$ is, to say the least,
barely investigated. Our main concern in this work is precisely the search of individual conditions ensuring the almost everywhere
convergence of the series 
$\sum_{k=1}^\infty c_k f(n_kx)$. We propose  new approaches  for treating these questions.   
Notice before continuing, that the problem
under consideration  is a natural continuation of the study of the trigonometrical system, since by Carleson's result, the series
$\sum_{k} c_k f(n_kx)$ converges almost everywhere for any trigonometrical polynomial $f$. And   this is in fact a convergence problem
that can be put inside the study of the   two-indices    trigonometrical system with    $\{e_{jk}, j,k\ge 1\}$ where we denote
$e(x)= e^{2i\pi x}$,
$e_n(x)= e(nx)$, $n\ge 1$. Let
$\T=\R\slash
\Z=[0,1[$. Let
$f(x)\sim
\sum_{j=1}^\infty  a_j e_j(x) $.  Let  
$f_n(x) = f(nx)$,
$n\in
\N$.  We assume throughout that 
$$f\in L^2(\T), \qq \qq \langle f,1\rangle =0 .$$ 

\vskip 2 pt A key  
preliminary step naturally consists with the search of bounds of
$
\|\sum_{k\in K}  c_k f_k
\|_2$  integrating in their formulation the arithmetical  structure of
$K$. That question   has      received a satisfactory answer only for specific cases. 
   In this work  we propose an 
approach   based on  
  elementary  Dirichlet   convolution calculus  and on a new decomposition of squared sums.  Although quite natural in regard of the posed problem, it
seems  at least to our knewledge, that  this  direction was  not prospected before, apart in the recent works \cite{We}, \cite{BW}. 
\vskip 2 pt
We show that our approach is strong enough to recover and even slightly improve a recent a.e. convergence result \cite{ABS} (Theorem 3) 
in the case $\mathcal N=\N$ without using analysis on the polydisc, see Theorem \ref{t1a}. We obtain  norm estimates 
of arithmetical type of  $\sum_{k} c_k f(kx)$  and also a related $\O$-result for the Riemann Zeta function involving factor closed sets. We begin with
stating and commenting them. Next we will state almost everywhere convergence results derived  from these estimates. The remainding part of the paper is
devoted to  the proofs of the results.

 \vskip 3pt \noi {\gsec Notation.} We write $\log\log x= \log \log (x\vee e^e)$, $\log\log\log x= \log\log \log (x\vee e^{e^e})$, $x>0$. 
 \vskip 3pt 
\section{Arithmetical Results} 
\subsection{A general arithmetical norm estimate.}  Let  
$d(n)$ be the  divisor function, namely the number of divisors of   $n$. 
  \begin{theorem}\label{c1} Assume that $ \sum_{m=1}^\infty a_{m }^2 d(m)<\infty$. Then, for any finite set $K$ of positive integers,
\begin{eqnarray*}  \big\|\sum_{k\in K} c_k f_k\big\|_2^2   &\le & \Big( \sum_{m=1}^\infty a_{m }^2 d(m)
\Big)    
\sum_{k
\in K }   c_{k}^2d(k^2).
    \end{eqnarray*}
 \end{theorem}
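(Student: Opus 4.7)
My strategy is to reduce the norm to a bilinear form in the Fourier coefficients via Parseval, apply a Cauchy--Schwarz inequality exploiting the divisor structure, and close the estimate with an elementary arithmetic inequality on $d(km)$.

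First, expand $f(x) = \sum_{m\ge 1} a_m \sin(2\pi m x)$, so that
$f_k(x) = \sum_{m \ge 1} a_m \sin(2\pi k m x).$
Collecting the contributions by the Fourier frequency $n = km$ gives
$$\sum_{k \in K} c_k f_k(x) \;=\; \sum_{n \ge 1} S_n \sin(2\pi n x), \qquad S_n \;:=\; \sum_{\substack{k \in K \\ k\,\mid\, n}} c_k\, a_{n/k}.$$
Parseval then yields
$\big\|\sum_{k \in K} c_k f_k\big\|_2^{\,2} = \tfrac12 \sum_n S_n^{\,2}.$

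The ``decomposition of squared sums'' I will use is the pointwise Cauchy--Schwarz bound, which leverages the fact that $S_n$ is a sum over the divisors $k$ of $n$ that lie in $K$, hence contains at most $d(n)$ non-zero terms:
$$S_n^{\,2} \;\le\; d(n) \sum_{\substack{k \in K\\ k\,\mid\, n}} c_k^{\,2}\, a_{n/k}^{\,2}.$$
Summing over $n$ and exchanging the order of summation (permissible since all terms are non-negative and $K$ is finite) via the substitution $n = km$, I obtain
$$\sum_n S_n^{\,2} \;\le\; \sum_{k \in K} c_k^{\,2}\, \sum_{m \ge 1} d(km)\, a_m^{\,2}.$$

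The final step is an arithmetical estimate of the inner sum. Using the classical submultiplicative bound $d(km) \le d(k)\,d(m)$ (both sides are multiplicative, and on prime powers one checks $(a+b+1) \le (a+1)(b+1)$) and the trivial monotonicity $d(k) \le d(k^2)$, one has
$d(km) \le d(k^2)\, d(m),$
so that
$\sum_{m \ge 1} d(km)\, a_m^{\,2} \le d(k^2) \sum_{m \ge 1} a_m^{\,2}\, d(m).$
Combining the three steps gives the desired inequality (with a factor $\tfrac12$ to spare from Parseval).

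The main obstacle, I expect, is the Cauchy--Schwarz step: the bound $S_n^2 \le d(n)(\cdots)$ individually loses a factor $d(n)$ that grows with $n$, and only the subsequent exchange of summation ``cashes this in'' for the cleaner arithmetic sum $\sum_m d(km) a_m^2$. The hypothesis $\sum_m a_m^{\,2}\,d(m) < \infty$ is precisely what is needed to control this sum uniformly in $k$, up to the factor $d(k^2)$. An alternative (and arguably more natural) decomposition groups the pairs $(k,k')$ appearing in the expansion $\sum_n S_n^{\,2} = \sum_{k,k'} c_k c_{k'} \sum_r a_{(k/g)r}\, a_{(k'/g)r}$ (with $g = \gcd(k,k')$) by the value of $g$; applying Cauchy--Schwarz block-wise then reproduces the same type of bound, and is likely closer to what the author has in mind when describing a ``new decomposition''.
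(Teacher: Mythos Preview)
Your proof is correct and, in fact, slightly sharper than what is stated (you obtain $d(k)$ in place of $d(k^2)$ before weakening, and a factor $\tfrac12$ to spare). However, the route is genuinely different from the paper's. The paper does \emph{not} collect Fourier coefficients by frequency and apply Cauchy--Schwarz pointwise; instead it proves a more general result (Theorem~\ref{t1}) by the GCD-decomposition you sketch in your final paragraph. Concretely, the paper writes
\[
\big\|\sum_{k\in K}c_kf_k\big\|_2^2=\sum_{d\in F(K)}S_d,\qquad S_d=\sum_{\substack{k,\ell\in K\\(k,\ell)=d}}c_kc_\ell\sum_{\nu\ge1}a_{\nu k/d}\,a_{\nu\ell/d},
\]
uses the M\"obius identity $\delta((k,\ell))=\sum_{d\mid(k,\ell)}\mu(d)$ to factorize $S_1$, and applies Cauchy--Schwarz with an arbitrary positive weight $\psi$. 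This yields the flexible bound $\|\sum c_kf_k\|_2^2\le B\sum_k c_k^2\,(\psi*\zeta_0)(k)$; Theorem~\ref{c1} is then the specialization $\psi=\theta$ (so that $\psi*\zeta_0(k)=d(k^2)$ via formula~(\ref{theta})), together with a short computation giving $B\le\sum_m a_m^2 d(m)$. What your approach buys is brevity and a marginally better constant for this particular statement; what the paper's approach buys is a parametrized family of inequalities (Theorem~\ref{t1}) from which other estimates in the paper are also drawn. Your closing remark accurately identifies the paper's ``new decomposition''.
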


The presence of the arithmetical factor $d(k^2)$ comes from formula (\ref{theta}). In   \cite{We}, we recently showed a similar estimate, however
restricted to sets
$K$ such that   
$K\subset ]e^r , e^{r+1}]$ for some
 integer $r$. Then,  
\begin{eqnarray}\label{s01}  \big\|\sum_{k\in K}c_k f_k\big\|_2^2   \le   \Big(  \sum_{\nu=1 }^\infty a_{ \nu }^2\D( \nu
 ) \Big)\sum_{ k  \in K  }c_k^2 d(k)  ,
  \end{eqnarray}  
where $\D(v)$ is  Hooley's  Delta function, 
$$\D(v)=\sup_{u\in \R}\sum_{d|v \atop
u<d\le eu} 1.$$
This one can be used to prove that under the conditions
$$A=\sum_{\nu\ge 1 }  a_{ \nu }^2\D(  \nu  ) <\infty, \qq \quad  
 B= \sum_{j  } c_j^2d(j)(\log j)^2 <\infty$$
     the series $\sum_{k=0}^\infty c_k f_k(  x)$ converges for almost all $x $. A slightly weaker result was established in \cite{We}
(see Theorem 1.1). Condition $A<\infty$ is very weak. As by \cite{T1},
  $$ \frac1{x}  \sum_{n\le x} \D(n)= \mathcal O\big( e^{c \sqrt{\log\log x \cdot
\log\log\log  x}}
\big) $$ for a suitable constant $c>0$ (whereas $\frac1{x}  \sum_{v\le x} d(v) \sim \log x$), 
it reduces when the Fourier coefficients are monotonic to 
\begin{equation} \label{hooley1} \sum_{\nu\ge 1 }  a_{ \nu }^2e^{c \sqrt{\log\log \nu \cdot
\log\log\log  \nu} }<\infty.
\end{equation} 
 Theorem
\ref{c1} is  deduced from a more general result. Introduce the necessary notation.  Let 
$$A_k=\sum_{\nu=1}^\infty a_{ {  {\nu  k} } }^2 
.$$
  Let    $\zeta_h$ denotes the arithmetic function defined by $\zeta_h (n) = n^h$ for all positive $n$.
In particular
$\zeta_0(n)=1$ for all
$n$. Let   $\theta(n) $ denotes the number of  squarefree divisors
of $n$.
Then $\theta(n)=2^{\o(n)}$  where $\o(n)$ is the prime divisor function, and by Mertens estimate, $\sum_{ k\le x }   2^{\o(k)} =C x
\log x   +{\mathcal O}(x)$, $x\ge 2$, where $C$ is some positive constant (\cite{CMS}, p.70). 
\vskip 1pt 
Given   $K\subset \N$, we note $F(K)=\{ d\ge 1;
\exists k\in K: d|k\}$. If $K$ is factor closed ($d|k\Rightarrow d\in K$  for all $k\in K$), then $F(K)=K$. 
These sets are usually termed FC sets (see \cite{CS} \S 3.3, \cite{HSW}). 
Typical examples are 
$  \{1,\ldots, n\}$  or   sets of mutually coprime integers. 
  Recall that if $\psi, \phi$ are arithmetical functions,   the Dirichlet convolution $\psi*\phi$ is defined by $ \psi*\phi (n)=
\sum_{d|n} \psi(d)
\phi(n/d)$.
  \begin{theorem}
\label{t1}
   Let $\psi$ be any
  arithmetical function taking only positive values. 
\vskip 2 pt
\noi i) For any finite set $K$ of positive integers, 
 \begin{eqnarray*}\label{s1b} \big\|\sum_{k\in K} c_k f_k\big\|_2^2& \le&     \sum_{d\in F(K)}\Big(\sum_{k \in
K\atop d|k}   c_{k}^2\psi(\frac{k}{d})\Big)
   \Big(\sum_{k \in K\atop d|k} 
\frac{A_{  \frac{k}{d}  }}{
\psi(\frac{k}{d})} \theta(\frac{k}{d}) 
\Big)  .
  \end{eqnarray*}
 \noi ii) In particular,
\begin{eqnarray*}\big\|\sum_{k\in K} c_k f_k\big\|_2^2&\le & B\, \sum_{k\in K} c_k^2\psi*\zeta_0(k ),
\end{eqnarray*} 
where  \begin{eqnarray*} B  =\sup_{ d\in F(K)} \Big(\sum_{k \in K\atop d|k} 
\frac{A_{  \frac{k}{d}  }}{
\psi(\frac{k}{d})} \theta(\frac{k}{d})
\Big)   
 <\infty.
  \end{eqnarray*} 
    \end{theorem}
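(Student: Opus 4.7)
The plan is to combine a Fourier (Parseval) expansion of the left-hand side with an arithmetic decomposition of each pair $(k,j)$ with $kj=m$ via its gcd, followed by a weighted Cauchy--Schwarz inequality. First, writing $f_k(x)=\sum_j a_j e_{jk}(x)$ and collecting Fourier modes,
\begin{equation*}
\sum_{k\in K} c_k f_k \;=\; \sum_{m\geq 1}\Big(\sum_{\substack{k\mid m\\k\in K}} c_k\,a_{m/k}\Big)e_m,
\end{equation*}
so that, by Parseval,
\begin{equation*}
\Big\|\sum_{k\in K} c_k f_k\Big\|_2^2 \;=\; \sum_{m\geq 1}\Big|\sum_{\substack{k\mid m\\k\in K}} c_k\,a_{m/k}\Big|^2.
\end{equation*}

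Next I would parameterize every pair $(k,j)$ with $k\in K$ and $kj=m$ through $d=\gcd(k,j)$, $\alpha=k/d$, $\beta=j/d$: this puts $d\in F(K)$, $d\alpha\in K$, $\gcd(\alpha,\beta)=1$, and $m=d^2\alpha\beta$, and is a bijection between the divisors $k\in K$ of $m$ and the triples $(d,\alpha,\beta)$ satisfying those constraints. A Cauchy--Schwarz applied to the inner sum for each $m$ with weights $\psi(\alpha)=\psi(k/d)$ then gives
\begin{equation*}
\Big|\sum_{k\mid m,\,k\in K} c_k a_{m/k}\Big|^2 \;\leq\; \Big(\sum_{(d,\alpha,\beta)} c_{d\alpha}^{\,2}\,\psi(\alpha)\Big)\Big(\sum_{(d,\alpha,\beta)} \frac{a_{d\beta}^{\,2}}{\psi(\alpha)}\Big),
\end{equation*}
both sums ranging over triples with $d^2\alpha\beta=m$, $\gcd(\alpha,\beta)=1$, $d\alpha\in K$. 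The key arithmetic input needed to aggregate these bounds after summing over $m$ is that $\theta(\ell)=2^{\omega(\ell)}$ counts the ordered coprime factorizations $\ell=uv$, together with the identity $A_{k/d}=\sum_\nu a_{\nu k/d}^{\,2}$: summing the Cauchy--Schwarz bound over $m$ and regrouping the triples by the common value of $d$ converts the $\beta$--aggregation into the advertised weight $A_{k/d}\,\theta(k/d)/\psi(k/d)$, summed over $k\in K$ with $d\mid k$.

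Part (ii) then falls out of (i) by bounding the second factor in each $d$--summand by its supremum $B$; what remains is $\sum_{d\in F(K)}\sum_{k\in K,\,d\mid k} c_k^{\,2}\psi(k/d)$, and swapping the order of summation (every $d\mid k$ with $k\in K$ lies in $F(K)$) gives $\sum_{k\in K} c_k^{\,2}(\psi\ast\zeta_0)(k)$. The main obstacle is inside the second paragraph: the Cauchy--Schwarz naturally produces a sum of products indexed by $m$, and one has to exploit the coprimality $\gcd(\alpha,\beta)=1$ together with the square--extraction identity $d(n)=\sum_{d^2\mid n}\theta(n/d^2)$ to regroup this sum into the diagonal form $\sum_{d\in F(K)} S_1(d)S_2(d)$ with the precise factor $\theta(k/d)$ stated in the theorem.
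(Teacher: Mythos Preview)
Your Parseval starting point is fine, and part (ii) from (i) is straightforward. The difficulty is exactly where you flag it: after applying Cauchy--Schwarz to each Fourier coefficient $S_m=\sum_{k\mid m,\,k\in K}c_k a_{m/k}$, you obtain $\sum_m P_m Q_m$ where $P_m,Q_m$ are sums over triples $(d,\alpha,\beta)$ with $d^2\alpha\beta=m$. But $P_m Q_m$ mixes \emph{pairs} of triples $(d_1,\alpha_1,\beta_1)$, $(d_2,\alpha_2,\beta_2)$ with possibly $d_1\neq d_2$, subject only to $d_1^2\alpha_1\beta_1=d_2^2\alpha_2\beta_2$. There is no single value of $d$ to ``regroup by,'' and the square-extraction identity $d(n)=\sum_{d^2\mid n}\theta(n/d^2)$ does not untangle this coupling. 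You have not shown, and it is not clear how to show along these lines, that $\sum_m P_m Q_m$ is dominated by $\sum_{d\in F(K)}\big(\sum_{k}c_k^2\psi(k/d)\big)\big(\sum_{k}A_{k/d}\theta(k/d)/\psi(k/d)\big)$.

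The paper takes a different route that avoids this obstruction. It expands the square first, decomposing $\|\sum_k c_kf_k\|_2^2=\sum_{d\in F(K)}S_d$ according to $d=(k,\ell)$ (not $\gcd(k,m/k)$). The heart of the argument is the treatment of $S_1=\sum_{(k,\ell)=1}c_kc_\ell\sum_\nu a_{\nu k}a_{\nu\ell}$: the M\"obius identity $\sum_{\delta\mid n}\mu(\delta)=\delta(n)$ applied to $n=(k,\ell)$ turns $S_1$ into $\sum_\nu\sum_\delta\mu(\delta)\big(\sum_{k\in K,\,\delta\mid k}c_k a_{\nu k}\big)^2$, a sum of \emph{squares} indexed by squarefree $\delta$. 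Cauchy--Schwarz on each square then yields the bound with the factor $\theta(k)=\#\{\delta\text{ squarefree}:\delta\mid k\}$, and the general $S_d$ reduces to $S_1$ by the substitution $k\mapsto k/d$. The point is that the M\"obius step produces a genuine diagonalization \emph{before} Cauchy--Schwarz is applied, so no cross-terms between different $d$'s ever appear.
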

  
  Choose for instance $\psi= \theta$ and note (\cite{MC} formula 1.54)) that
\begin{eqnarray}\label{theta}\psi*\zeta_0(k )=\sum_{d| k} \theta(d) = d(k^2).
  \end{eqnarray}
Let $d\in F(K)$, then
\begin{eqnarray*}\sum_{k \in K\atop d|k} 
\frac{A_{  \frac{k}{d}  }}{
\psi(\frac{k}{d})} \theta(\frac{k}{d})=\sum_{k \in K\atop d|k} 
\sum_{\nu=1}^\infty a_{ {  {\nu   \frac{k}{d} } } }^2=\sum_{m=1}^\infty a_{m }^2\Big(\!\sum_{k \in K\atop d|k,  \frac{k}{d}|m} 
  1
\Big)=\sum_{m=1}^\infty a_{m }^2\Big(\sum_{   jd\in K\atop j|m} 1
\Big) \le \sum_{m=1}^\infty a_{m }^2 d(m).
\end{eqnarray*}  
Since it is true for any $d\in F(K)$, we deduce that $B\le \sum_{m=1}^\infty a_{m }^2 d(m)$, and so   Theorem \ref{c1} follows  from
Theorem
\ref{t1}.
     
\subsection{Related classes of  arithmetical quadratic forms.} 
  The question  of the reduction of a quadratic form whose coefficients are a function of the greater commun divisor of
their indices 
$$ X= \sum_{i,j}  x_ix_j  F((i,j))$$
       was  considered    long ago by Ces\`aro \cite{Ce1}, \cite{Ce2} in 1885-1886, after the works of  Smith \cite{S}
and Mansion  in 1875-1876 who calculated     their determinant (Ces\`aro also calculated other classes of arithmetical determinants). 
These quadratic forms are in turn directly related to an important class of functions (see \ref{funct})) whose associated systems of dilated functions was
recently intensively studied.    Other authors, among them Jacobstahl
\cite{J},  Carlitz 
\cite{C}, investigated before this problem (see
the survey on GCD matrices by Haukkanen, Wang and Sillanp\"a\"a \cite{HSW}  for more references). In the present case, the reduction takes the following
form, 
\begin{equation}  \label{reds} \sum_{k,\ell =1}^n c_kc_\ell \frac{(k,\ell)^{2s}}{j^s \ell^s}=\sum_ {i=1}^n  J_{2s}(i)\Big( \sum_{k=1}^n c_k k^{-s} {\bf
1}_{i|k} \Big)^2.
\end{equation}
Here   $J_{2s}(i)=i^{2s} \prod_{p|i} (1-p^{-2s})$ is the generalized Euler totient function (see Section \ref{s3}). This formula, which is used in \cite{BW}
(see Lemma 1.1), was already known by Ces\`aro. Obviously, (\ref{reds}) remains   true when replacing 
$\{1,\ldots,n\}$ by a factor-closed set. A dual problem is   M\"obius inversion of a family of vectors (with Gram matrix $\{F((i,j))\}_{i,j} $). Recent
related works are in  Balazard and Saias
\cite{BS}, Br\'emont
\cite{Br}.

\vskip 3pt
In matrix form, this can be condensed in the following   proposition, which  generalizes Proposition 2.2 in \cite{Br} based on Carlitz
Lemma, see also Li's representation of GCD matrices \cite{Li} and
\cite{HL}. As the proof is elementary and short,  we shall give it right after some necessary complementary remarks. 
\begin{proposition}\label{mov} Let     $T=\big(t_{i,j}\big)_{n\times n}$ and   $\check T=\big(\check t_{i,j}\big)_{n\times n}$ be matrices defined
by 
\begin{equation} t_{i,j}= \d_i\theta_j {\bf 1}_{ i|j },  \qq \check t_{i,j}= \frac{1}{\theta_i\d_j} {\bf 1}_{ i|j
}\m\big(\frac{j}{i}\big) ,
\qq i=1,\ldots, n,\end{equation}  where     $\d_i , \,   \theta_i  ,  i=1,\ldots, n$ are     real numbers   verifying   $  \d_i>0,
\theta_i\not= 0$ for all $ i $.
 Then, \vskip 3pt
 {\rm (a)} 
$T$ is invertible and 
$T^{-1}=    
\check T$. 
\vskip 2pt
  {\rm (b)} Let $H_m, m=1,\ldots, n$ be real numbers defined as follows
\begin{equation}H_m=\sum_{k|m} \d_k^2    ,\quad
m=1,\ldots, n.
\end{equation}
Then ${}^tT T=A  $ where $A= \big(\theta_i\theta_j H_{(i,j)}\big)_{n\times n}$. Further $A$ is positive definite.
\vskip 2pt  {\rm (c)} Let $ G=(g_i)_{1\le i\le n} $ be   vectors    in  an inner product space     such that ${\rm Gram}(G)= A$. Then  
$F={}^t\check T G =(f_i)_{1\le i\le n}$ has orthonormal components.
\vskip 2pt  {\rm (d)} For any reals $c_i $, we have 
\begin{equation} \sum_{k=1}^n c_k g_k = \sum_{i=1}^n b_i f_i, \quad {\rm where} \quad b_i =\d_i\big(\sum_{k=1}^n c_k \theta_k {\bf
1}_{i|k} \big), \ i=1,\ldots, n.
\end{equation} In particular,
 $$\big\|\sum_{k=1}^n c_k g_k\big\|^2= \sum_{i=1}^n    b_i^2. $$

\end{proposition}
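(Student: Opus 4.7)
The plan is to prove parts (a)--(d) in sequence, each reducing to a short matrix computation. The single nontrivial input is the M\"obius identity $\sum_{a\mid m}\m(m/a)=\mathbf{1}_{m=1}$; everything else is bookkeeping about which entries vanish because of the divisibility indicator.

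For (a) I would compute the product $T\check T$ entrywise. The factors $\theta_k$ cancel immediately, leaving
\[
(T\check T)_{i,j}=\frac{\d_i}{\d_j}\sum_{i\mid k,\,k\mid j}\m(j/k).
\]
If $i\nmid j$ the sum is empty. If $i\mid j$, write $k=ia$ with $a\mid(j/i)$; the inner sum becomes $\sum_{a\mid(j/i)}\m((j/i)/a)$, which by M\"obius inversion is $\mathbf{1}_{j=i}$. Hence $T\check T=I$, so $T$ is invertible with inverse $\check T$.

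For (b) a direct expansion gives
\[
({}^tTT)_{i,j}=\sum_{k}\d_k^2\theta_i\theta_j\mathbf{1}_{k\mid i}\mathbf{1}_{k\mid j}=\theta_i\theta_j\sum_{k\mid(i,j)}\d_k^2=\theta_i\theta_j H_{(i,j)},
\]
which is the asserted equality $A={}^tTT$. Positive definiteness is then automatic: for any nonzero vector $x$, $\langle Ax,x\rangle=\|Tx\|^2>0$ since $T$ is invertible by (a). For (c), using $A={}^tTT$ and ${}^t\check T=({}^tT)^{-1}$ one gets
\[
\mathrm{Gram}(F)={}^t\check T\,A\,\check T={}^t\check T\,{}^tT\,T\,\check T=I,
\]
so the $f_i$ are orthonormal. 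Finally for (d), from $F={}^t\check T G$ we deduce $G={}^tT F$, that is $g_k=\sum_i t_{i,k}f_i=\theta_k\sum_{i\mid k}\d_i f_i$. Substituting and swapping the order of summation yields $\sum_k c_k g_k=\sum_i b_i f_i$ with $b_i=\d_i\sum_{k:\,i\mid k}c_k\theta_k$, and the norm identity follows from the orthonormality of $F$ established in (c).

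The only thing to be careful about is transpose conventions and the direction of the M\"obius inversion (in particular, checking that the weights $\d_i,\theta_j$ in $\check t_{i,j}$ are placed so that they cancel against the weights of $t_{i,j}$ to reproduce the identity). Once (a) is established, parts (b)--(d) are formal consequences obtained by manipulating $T$, $\check T$, and $A$, so no further combinatorial input is needed.
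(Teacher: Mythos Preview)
Your proof is correct and follows essentially the same approach as the paper. The only cosmetic differences are that the paper computes $\check T T$ first (then says ``similarly $T\check T=I$'') while you compute $T\check T$ directly, and in (c) the paper expands $\langle f_i,f_j\rangle$ entrywise whereas you package the same computation as the matrix identity $\mathrm{Gram}(F)={}^t\check T\,{}^tT\,T\,\check T=I$; both arguments are equivalent.
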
 
\begin{remark} We recall   that  positive semi-definite matrices are always Gram matrices (of vectors in  an inner
product space), hence the existence of
$G$ in   (c). Further, a matrix $B$ is positive  definite if and only if there exists a non-singular lower triangular matrix $L$ such that
$A= L{}^t L$, see  \cite{HJ}, Corollary 7.2.9. Furthermore,  by the M\"obius inversion formula,
$$H_m= \sum_{k|m} \d_k^2 ,\quad
m=1,\ldots, n\qq \Longleftrightarrow \qq \d_k^2= \sum_{  \ell |k} \mu\big(\frac{k}{\ell}\big)H_\ell,\quad
k=1,\ldots, n. $$
\end{remark}
\begin{remark}
  Take    $\theta_j= j^{-s}$, $H(x)= x^{2s}$. Then $\d_k= \sqrt{J_{2s}(k)}$ and 
$$a_{i,j}=\frac{(i,j)^{2s}}{i^s j^s} , \qq \quad b_{i}=\sqrt{J_{2s}(i)}\Big(\sum_{k=1}^n c_k k^{-s} {\bf 1}_{i|k} \Big)  .$$
Hence (\ref{reds}). 
Next  consider  the   class of functions   introduced in \cite{LS},  
\begin{equation}\label{funct}  f^s (x)= \sum_{j=1}^\infty \frac{\sin 2\pi jx}{j^s} 
\end{equation}
where  
$s>1/2$. 
 Recall that 
\begin{equation*}  \langle f^s_k, f^s_\ell\rangle=\zeta(2s) \frac{(k,\ell)^{2s}}{k^s\ell^s} .
\end{equation*}
And so \begin{equation}  \label{HS1}\big\|\sum_{k=1}^n c_k f^s_{k}\big\|_2^2=\zeta(2s)\sum_{k,\ell=1}^n
 c_k   c_\ell\frac{(k,\ell)^{2s}}{k^s\ell^s} .
 \end{equation}
  This class of functions was recently much studied. We  begin with examinating    

Further,  the system  
\begin{equation} h_i^{s}= \frac{1}{\sqrt{J_{2s}(i)}}\sum_{j|i}   j^s    \m\big( \frac{i}{j}\big)f_j^{s}\qq i=1,\ldots  
\end{equation}
is orthonormal and 
\begin{equation} \sum_{k=1}^n c_k f^{s}_k = \sum_{i=1}^n b_i h^{s}_i, 
\quad {\rm where} \quad b_i =\sqrt{J_{2s}(i)}\big(\sum_{k=1}^n c_k
k^{-s} {\bf 1}_{i|k} \big), \ i=1,\ldots, n, \quad n\ge 1.
\end{equation} 

This is Br\'emont's result,  who deduced the following characterization:
\vskip 2pt  {\it The series $\sum_{k } c_kf^{s}_k$ converges in
$L^2(\T)$ if and only if the following
uniformity condition holds,}
\begin{equation}\label{uc} \lim_{n\to \infty}\sup_{N>n} \sum_ {i=1}^\infty J_{2s}(i)\Big( \sum_{k  =N+1}^\infty c_k k^{-s} {\bf
1}_{i|k} \Big)^2 \ =\ 0.
 \end{equation}
Notice that by  assumption and Cauchy-Schwarz's inequality, the series $\sum_{k\ge 1}|c_k|k^{-s} $ is convergent. 
See \cite{Br}, Proposition 2.2 and Corollary 2.3.  Although satisfactory, (\ref{uc})  is however  implicit, and it would be desirable to find a more concrete
characterization, namely depending more directly on the coefficients $(c_k)_k$. 
As a (non trivial) application, it is showed in \cite{Br}, that  $L^2(\T)$-convergence holds if $|c_k|\le \d(k)$ where $\d$ is multiplicative and
$\sum_n\d^2(n)<\infty$. \end{remark} 
\begin{proof}[Proof of Proposition \ref{mov}] Let $I$ denote the $n\times n$ identity matrix. 
\item (a) We have,
$$\sum_{k=1}^n \check t_{i,k} t_{k,j}=\sum_{k=1}^n\frac{1}{\theta_i\delta_k}{\bf 1}_{i|k}\m\big(\frac{k}{i}\big) \d_k\theta_j {\bf
1}_{k|j}=\frac{\theta_j}{\theta_i}\sum_{k=1}^n {\bf 1}_{i|k}\m\big(\frac{k}{i}\big)   {\bf
1}_{k|j} = \frac{\theta_j}{\theta_i}{\bf 1}_{i|j}\sum_{  \ell |\frac{j}{i}} \m(\ell)=0,
$$
if $i\not =j$, since $\sum_{  \ell |m} \m(\ell)=0$, if $m\ge 2$. Hence $\check T T= I$, and similarly $  T \check T= I$.
\item (b) We compute  the $(i,j)$-th entry of  ${}^t T T$. 
$$ \sum_{k=1}^n t_{k,i}t_{k,j} =\theta_i \theta_j \sum_{k|(i,j)} \d_k^2 =\theta_i \theta_j H_{(i,j)}. $$
  by the M\"obius inversion formula.
We also have $\det A=( \det T)^2=\prod_{i=1}^n \d_i\theta_i\not= 0$. 
And if $X\not=0$, then ${}^t XAX={}^t YY> 0$ with $Y= TX\not=0$.
    \item (c)
\begin{eqnarray*}\langle f_i, f_j\rangle&= &\sum_{k=1}^n \sum_{\ell =1}^n \check t_{k,i}\check t_{\ell, j} \langle g_k, g_\ell\rangle 
= \sum_{k=1}^n \sum_{\ell =1}^n \check t_{k,i}\Big(  \sum_{u=1}^n t_{k,u}t_{u, \ell} \Big)\check t_{\ell, j}\cr 
&=&\sum_{k=1}^n\check t_{k,i} \sum_{u=1}^n t_{k,u}{\bf 1}_{u=j}  =\sum_{k=1}^n\check t_{k,i}
t_{k,j}={\bf 1}_{i=j}.\end{eqnarray*}
  \item (d)   As   $   G ={}^tTF$, we have  
 $t_{i,j}= \d_i\theta_j {\bf 1}_{ i|j }$
 $$\sum_{k=1}^n c_k g_k =\sum_{k=1}^n c_k \sum_{i=1}^n t_{i,k}f_i =\sum_{k=1}^n c_k\big(\theta_k \sum_{i=1}^n \d_i {\bf 1}_{i|k}
f_i\big)=
\sum_{i=1}^n  f_i
 \d_i\big(\sum_{k=1}^n c_k \theta_k {\bf 1}_{i|k} \big)=\sum_{i=1}^n  f_i b_i.$$
 \end{proof}
 \vskip 3 pt

Another approach was proposed by  Hilberdink \cite{H} who has estimated the sums
$$\sum_{k,\ell=1}^N
  \frac{(n_k ,n_\ell)^{2s}}{n_k^s n_\ell^s}  $$
when $n_k=k$ and obtained optimal bounds in this case. 
He showed that if $b_n=n^{-s} \sum_{d|n} d^{s}a_d$, then
\begin{equation}\label{hi}\sum_{n=1}^N  |b_n|^2= \sum_{m,n\le N} a_m   \overline{a}_n\frac{(m,n)^{2s}}{m^sn^s}{\sum_{k\le
N/[m,n]}}^{\hskip -13 pt*}\ \ \ \frac{1}{k^{2s}}.
\end{equation}
Here we introduce the symbol (${}^*$) to mean that the sum is $0$ when the summation index is  empty. And this requires  some restriction with respect to the original statement,  see
Proposition 3.1 and after in \cite{H}.  
More precisely, if  $a_n\ge 0$, the
right-term is less than
$\zeta(2s) \sum_{m,n\le N} a_m   \overline{a}_n\frac{(m,n)^{2s}}{m^sn^s}$. And a  similar lower bound occurs when   $a_n\ge 0$. 

\vskip 2 pt
When the $n_k$'s are arbitrary but distinct
positive integers,
 the initial result is due to G\'al \cite{Ga},
who showed for $s=1$ that
\begin{equation}\label{gal}\sum_{k,\ell=1}^N
  \frac{(n_k ,n_\ell)^{2 }}{n_k n_\ell} \le CN(\log\log N)^2, 
\end{equation}
where $C$ is an absolute constant, and moreover that estimate is optimal. It follows for this choice of values of
$n_k$ and by taking $c_k\equiv 1$ that in this case
\begin{equation}  \label{ext}\big\|\sum_{k=1}^N c_k f^1_{n_k}\big\|_2^2\ge C N(\log\log N)^2\gg  \sum_{k=1}^N c_k^2.
 \end{equation}
 
This is a famous result and   a few  explanatory words concerning the proof are necessary. G\'al's proof is based on the observation that the sum in (\ref{gal}) will be not maximal unless $\{n_1,\ldots, n_k\}$ is an FC set, 
namely $d|n_j\Rightarrow d=n_i$ for some $i$. Hence it follows that if the sum is maximal, then the corresponding $n_i$ are products of powers of at most $C \log N$ primes. 

This result was recently extended in \cite{ABS} to the case
$0<s<1$ (see also \cite{BoS} for   recent improvments, in the case $s=1/2$ notably)
 by representing these sums as Poisson integrals on the polydisc and by suitably modifying G\'al's combinatorial argument.   When
sieving the coefficients $c_k$   according to their order of magnitude, that estimate can be implemented and then becomes a  decisive
tool when $f$ has slowly decreasing Fourier coefficients, typically  when
$f=f^1$. That allowed the authors to
establish quite sharp results for the a.e. convergence of series $\sum_k c_kf^{1}_{n_k} $,  and in fact by a plain monotonicity argument
on the Fourier coeffcients, for any $f\in{\rm BV}(\T)$. The authors further extended their result to any $f\in{\rm Lip}_{1/2}(\T)$. 
These results are of relevance in the present work.
\vskip 3pt \noi {\gsec
Representation using Cauchy measures.} 
Notice before continuing that 
\begin{eqnarray*}
  \frac{(n_k ,n_\ell)^{2 s}}{n_k^s n_\ell^s}&=&\prod_{p} p^{-s|v_p(n_k)-v_p(n_\ell)|}
\end{eqnarray*}
where $v_p(n)$ is the $p$-valuation of $n$ (namely $p^{v_p(n)}||n$ and $v_p(n)=0$ if $p\not|n$). And from the relation    
  $ e^{-|\t|}= \int_\R  e^{i\t t}\frac{\dd t}{\pi( t^2+1)}$, 
follows that 
\begin{eqnarray*} \label{cor1}e^{-s|v_p(n_k)-v_p(n_\ell)|\log p}=\int_\R   \frac{1}{  p^{ iv_p(n_k)st}p^{-iv_p(n_\ell )st}}
\frac{\dd t}{\pi( t^2+1)}. 
\end{eqnarray*}  
So that  \begin{eqnarray} \label{Cauchy}\sum_{k,\ell =1}^N c_k\overline{c}_\ell \frac{(n_k ,n_\ell)^{2 s}}{n_k^s n_\ell^s}
&=&\sum_{k,\ell =1}^N c_k\overline{c}_\ell  \int_{\R^\infty} 
  \prod_{p}\Big(\frac{1}{  p^{ iv_p(n_k)st_p}p^{-iv_p(n_\ell )st_p}} \frac{\dd t_p}{\pi( t_p^2+1)}\Big)
  \cr 
  &=&  \int_{{\R}^\infty}  
 \Big| \sum_{k =1}^N   \frac{c_k}{  \prod_{p}p^{ iv_p(n_k)st_p}}\Big|^2\prod_{p} \frac{\dd t_p}{\pi( t_p^2+1)}, 
\end{eqnarray} 
namely, the sum directly expresses as a squared norm with respect  to the infinite Cauchy measure. 
\subsection{A new arithmetical quadratic estimate.} It turns up that even for this specific class of functions,  another much simpler
device can be used, based on Dirichlet convolution calculus, which also leads, at least when
$n_k=k$, to slightly sharper convergence results. The basic tool, which we are
going to state now,   provides a new estimate of $\|\sum_{k\in K} c_k f^s_{ k}\|$, $K$ arbitrary. This estimate is of individual type, in the sense that
it is expressed by means of the values taken on $K$ by some elementary arithmetical functions. 
 Let for  $u\in \R$, $\sigma_u (k)=\sum_{d|k} d^u$. In particular $ \sigma_0=d $, $ \sigma_1=\s $ is the usual sum-divisor
function and   $\s_{-\a}(n)=n^{-\a} \s_\a(n)  $.
 \begin{theorem} \label{p1}Let $s>0$ and $0\le \tau\le 2s$. Let also $\psi_1(u)>0$  be non-decreasing.  Then for any finite set $K$ of integers,
\begin{eqnarray*} \label{oa}
\sum_{k,\ell\in K} c_k   c_\ell\frac{(k,\ell)^{2s}}{k^s\ell^s}  &\le& \Big(\sum_{u \in F(K)} \frac{1}{\psi_1(u)\s_\tau(u) }
\Big)\Big(\sum_{\nu \in K}
  c_\nu^2  \psi_1(\nu)\s_{ \tau-2s}(\nu) \Big). 
 \end{eqnarray*}

In particular, 
\vskip 3pt 
\noi \ {\rm (i)}  
\begin{eqnarray*} 
\sum_{k,\ell\in K}c_k   c_\ell
 \frac{(k,\ell)^{2s}}{k^s\ell^s}   &\le& M(K) \Big(\sum_{k \in K}
  |c_k|^2  \s_{ \tau-2s}(k) \Big) , 
 \end{eqnarray*}
with
$$M(K)=  \sum_{k \in F(K)} \frac{1}{\s_\tau(k) }   .$$

\noi \ {\rm (ii)} \begin{eqnarray*}
\sum_{k,\ell\in K} c_k   c_\ell\frac{(k,\ell)^{2s}}{k^s\ell^s}  &\le &  \Big(\sum_{u \in F(K)} \frac{1}{\overline{\s}_{-s}(u)\s_s(u) }
\Big)\Big(\sum_{\nu \in K}
  c_\nu^2 \overline{\s}_{-s}(u)\s_{-s}(u) \Big), 
 \end{eqnarray*}
where $\overline{\s}_{-s}(u):=\max\{\s_{-s}(v),  1\le v\le u\}$.
\vskip 3pt 
\noi \ {\rm (iii)} Further
\begin{eqnarray*} 
\sum_{k,\ell\in K} c_k   c_\ell\frac{(k,\ell)^{2}}{k\ell} 
&\le&
 \frac{\pi^2}{6 } \Big(\sum_{u \in F(K)} \frac{\p(u)}{{u^2}\log\log u } \Big)\Big(\sum_{\nu \in K}
  c_\nu^2 \s_{-1}(\nu) \log\log \nu \Big),
 \end{eqnarray*}
  \end{theorem}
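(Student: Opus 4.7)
The starting point is the exact identity (\ref{reds}), which, applied to the factor-closed set $F(K)$ with $c_k$ extended by $0$ on $F(K)\setminus K$, gives
\begin{equation*}
S := \sum_{k,\ell \in K} c_k c_\ell \frac{(k,\ell)^{2s}}{k^s\ell^s} \ =\ \sum_{d\in F(K)} J_{2s}(d)\,T_d^2, \qquad T_d := \sum_{k\in K,\ d\mid k}\frac{c_k}{k^s}.
\end{equation*}
Writing $k=dm$ inside $T_d$, my plan is to apply Cauchy--Schwarz in the variable $m\in\{m:dm\in K\}$ with a weight $h(m)$ chosen so that the second (coefficient-free) factor immediately reproduces the quantity $A:=\sum_{u\in F(K)}1/[\psi_1(u)\sigma_\tau(u)]$ appearing in the theorem. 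The natural choice is $h(m):=\psi_1(m)\sigma_\tau(m)/m^{2s}$: Cauchy--Schwarz then yields
\begin{equation*}
T_d^2 \ \le\ \frac{1}{d^{2s}}\Big(\sum_{m:\,dm\in K} c_{dm}^2\,h(m)\Big)\Big(\sum_{m:\,dm\in K}\frac{1}{\psi_1(m)\sigma_\tau(m)}\Big) \ \le\ \frac{A}{d^{2s}}\sum_{m:\,dm\in K}c_{dm}^2\,\frac{\psi_1(m)\sigma_\tau(m)}{m^{2s}},
\end{equation*}
since $dm\in K$ forces $m\in F(K)$, so the second factor is bounded by $A$ uniformly in $d$.

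Plugging back and exchanging the $(d,m)$-summation via $k=dm$ produces
\begin{equation*}
S \ \le\ A\sum_{k\in K}\frac{c_k^2}{k^{2s}}\sum_{d\mid k}J_{2s}(d)\,\psi_1(k/d)\,\sigma_\tau(k/d).
\end{equation*}
The monotonicity assumption $\psi_1(k/d)\le\psi_1(k)$ (valid since $k/d\le k$) lets me pull $\psi_1(k)$ out, leaving the Dirichlet convolution $(J_{2s}*\sigma_\tau)(k)$. Using the elementary identities $J_{2s}*\mathbf{1}=\zeta_{2s}$ and $\sigma_\tau=\mathbf{1}*\zeta_\tau$, one has $(J_{2s}*\sigma_\tau)(k)=(\zeta_{2s}*\zeta_\tau)(k)=k^\tau\sigma_{2s-\tau}(k)=k^{2s}\sigma_{\tau-2s}(k)$. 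The factor $k^{2s}$ cancels the denominator, yielding the main inequality of the theorem.

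The three corollaries are immediate specializations. Item (i) corresponds to $\psi_1\equiv 1$. For (ii) I would take $\tau=s$ and $\psi_1(u)=\overline{\sigma}_{-s}(u)$, which is non-decreasing by its very definition as a running maximum; then $\sigma_{\tau-2s}=\sigma_{-s}$ and the stated form drops out. For (iii) I would set $s=\tau=1$ and $\psi_1(u)=\log\log u$ (non-decreasing thanks to the convention in the Notation), and conclude via the pointwise identity $\varphi(u)\sigma(u)/u^2=\prod_{p\mid u}(1-p^{-2})\ge\prod_p(1-p^{-2})=6/\pi^2$, which converts $1/\sigma(u)$ into $(\pi^2/6)\varphi(u)/u^2$. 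The only delicate point in the plan is guessing the weight $h$; once one sees that $h(m)=\psi_1(m)\sigma_\tau(m)/m^{2s}$ matches the second Cauchy--Schwarz factor to $A$, the convolution identity $(\zeta_{2s}*\zeta_\tau)(n)=n^\tau\sigma_{2s-\tau}(n)$ completes the argument in one line.
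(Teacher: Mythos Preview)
Your proof is correct and follows the same route as the paper's: the decomposition $S=\sum_{d\in F(K)}J_{2s}(d)\,T_d^{2}$, a weighted Cauchy--Schwarz, the convolution identity $J_{2s}*\sigma_\tau(n)=n^{\tau}\sigma_{2s-\tau}(n)$, and the monotonicity of $\psi_1$ to pull it outside the divisor sum. Your presentation is in fact slightly more streamlined---you apply Cauchy--Schwarz once, directly to each $T_d$ in the variable $m=k/d$, whereas the paper first reorganizes the double sum over the quotients $u=k/d$, $v=\ell/d$ and applies Cauchy--Schwarz twice (first in $d$ to decouple $u$ and $v$, then in $u$ with the weight $\psi(u)=u^{-s}\psi_1(u)\sigma_\tau(u)$)---but the resulting bound is identical. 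One minor slip in your derivation of (iii): $\varphi(u)\sigma(u)/u^{2}=\prod_{p^{a}\| u}(1-p^{-(a+1)})$, which equals $\prod_{p\mid u}(1-p^{-2})$ only for squarefree $u$; in general one has $\ge$ at that step, which is still sufficient for the needed bound $\varphi(u)\sigma(u)/u^{2}\ge 6/\pi^{2}$ (the paper simply quotes this inequality from the literature).
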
  
  As an immediate consequence, 
  we get recalling  (\ref{HS1}),
  \begin{corollary} \label{r1} Let $s>1/2$.    Then for any finite  set $K$,\begin{eqnarray*} 
\zeta(2s)^{-1}\big\|\sum_{k\in K}  c_k f^s_k\big\|_2^2  &\le&  \inf_{0< \e\le 2s-1} \frac{1+\e}{\e } \, \sum_{k \in K}
  |c_k|^2  \s_{ 1+\e-2s}(k)   . 
 \end{eqnarray*}
  \end{corollary}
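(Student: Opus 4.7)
The plan is to combine the identity \eqref{HS1} with Theorem~\ref{p1}(i) applied at the specific value $\tau = 1+\e$. By \eqref{HS1} we already have
\[
\zeta(2s)^{-1}\bigl\|\sum_{k\in K} c_k f^s_k\bigr\|_2^2 = \sum_{k,\ell \in K} c_k c_\ell \frac{(k,\ell)^{2s}}{k^s\ell^s},
\]
so the whole task reduces to estimating the right-hand side by the quantity appearing in the corollary.

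To implement Theorem~\ref{p1}(i), I would fix $\e$ with $0 < \e \le 2s-1$ and set $\tau = 1+\e$; the constraint $\tau \le 2s$ is exactly $\e \le 2s-1$, so this choice is admissible. The theorem then gives
\[
\sum_{k,\ell \in K} c_k c_\ell \frac{(k,\ell)^{2s}}{k^s\ell^s} \;\le\; M(K)\sum_{k\in K} |c_k|^2 \s_{1+\e-2s}(k), \qquad M(K) = \sum_{u\in F(K)} \frac{1}{\s_{1+\e}(u)}.
\]
It therefore remains to show the purely arithmetical bound $M(K) \le (1+\e)/\e$, uniformly in $K$.

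For this, the key observation is the trivial divisor bound $\s_{1+\e}(u) \ge u^{1+\e}$ for every positive integer $u$ (the term $d=u$ already contributes $u^{1+\e}$). Since $F(K)\subset \N$, this yields
\[
M(K) \;\le\; \sum_{u=1}^\infty \frac{1}{u^{1+\e}} \;=\; \zeta(1+\e),
\]
and an elementary integral comparison gives
\[
\zeta(1+\e) \;\le\; 1 + \int_1^\infty \frac{du}{u^{1+\e}} \;=\; 1 + \frac{1}{\e} \;=\; \frac{1+\e}{\e}.
\]
Putting these bounds together yields the desired inequality for each admissible $\e$, and taking the infimum over $0 < \e \le 2s-1$ finishes the proof.

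There is no real obstacle here: the structural estimate is already carried by Theorem~\ref{p1}(i), and the corollary is just an optimization step. The only mildly delicate point is matching the factor $(1+\e)/\e$ rather than $1/\e$ or $\zeta(1+\e)$; the integral-test bound above is what produces that precise constant cleanly without invoking any deeper property of $\zeta$.
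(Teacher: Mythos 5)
Your proposal is correct and follows essentially the same route as the paper: apply Theorem~\ref{p1}(i) with $\tau=1+\e$, use $\s_{1+\e}(u)\ge u^{1+\e}$ to bound $M(K)$ by $\sum_{u\ge 1}u^{-(1+\e)}\le (1+\e)/\e$, and combine with \eqref{HS1}. (The paper's text cites assertion~(ii) of Theorem~\ref{p1} at this point, but the quantity $M(K)$ it manipulates is the one from assertion~(i), which is what you correctly use.)
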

\begin{proof}  Let indeed $0< \e\le 2s-1$ and take $\tau = 1+\e$. From the obvious inequality $\s_\tau(k)\ge k^\tau$, follows that
$$M(K)\le   \sum_{k \in F(K)}\frac{1}{k^{1+\e} } \le  \sum_{k \ge 1} \frac{1}{k^{1+\e} }\le \frac{1+\e}{\e } .$$
So that Corollary \ref{r1} just follows from assertion (ii) of Theorem \ref{p1}. 
\end{proof} 
\begin{remark}Letting for instance $s=1$ and using
monotonicity of the Fourier coefficients, Corollary 
\ref{r1} implies in particular:  
\vskip 1 pt For any $f\in {\rm BV}(\T)$ with $\int_\T f\dd \l = 0$, 
 \begin{eqnarray*} 
 \big\|\sum_{k\in K}  c_k f_k\big\|_2^2  &\le& C(f) \inf_{0< \e\le 2s-1} \frac{1+\e}{\e }\, \sum_{k \in K}
  |c_k|^2  \s_{ -1+\e }(k)   . 
 \end{eqnarray*} 
And $C(f)$ depends on $f$ only.
\end{remark}
 
We derive from Corollary \ref{r1}  a  sharp arithmetical sufficient condition for the in-norm
convergence of the series
$\sum_{k\ge 1}  c_k f^s_k$.
 \begin{corollary} \label{r2} Let $s>1/2$. Assume that the following condition is fulfilled:
     \begin{eqnarray*} 
\hbox{For some $\e>0$,}\qq \sum_{k\ge 1}
  |c_k|^2  \s_{ 1+\e-2s}(k) <\infty . 
 \end{eqnarray*}
Then the series $\sum_{k\ge 1}  c_k f^s_k$ converges in $L^2(\T)$.   \end{corollary}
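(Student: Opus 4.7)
The plan is to deduce $L^2$ convergence from Corollary \ref{r1} via the standard Cauchy criterion. Since $L^2(\T)$ is complete, it suffices to show that the partial sums $S_N := \sum_{k=1}^{N} c_k f^s_k$ form a Cauchy sequence, i.e.\ that $\|S_M - S_N\|_2 \to 0$ as $N \to \infty$, uniformly in $M > N$.

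First I would reduce the hypothesis to the range of $\e$ covered by Corollary \ref{r1}, namely $0 < \e \le 2s-1$. Given the $\e > 0$ supplied by the hypothesis, set $\e' := \min(\e,\, 2s-1)$; this lies in $(0, 2s-1]$ because $s > 1/2$. The key elementary observation is that for each fixed $k \ge 1$ the function $u \mapsto \s_u(k) = \sum_{d\mid k} d^u$ is non-decreasing in $u$, since every divisor $d$ satisfies $d \ge 1$ and $d \mapsto d^u$ is non-decreasing in $u$ for such $d$. Applied with the exponents $1+\e'-2s \le 1+\e-2s$, this yields
\begin{eqnarray*}
\sum_{k \ge 1} |c_k|^2 \s_{1+\e'-2s}(k) &\le& \sum_{k \ge 1} |c_k|^2 \s_{1+\e-2s}(k) \ < \ \infty,
\end{eqnarray*}
so the hypothesis is inherited by the admissible parameter $\e'$.

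Next, I would apply Corollary \ref{r1} to the finite set $K = \{N+1, \ldots, M\}$ with the parameter $\e'$, which gives
\begin{eqnarray*}
\|S_M - S_N\|_2^2 &\le& \zeta(2s)\,\frac{1+\e'}{\e'} \sum_{k=N+1}^{M} |c_k|^2 \s_{1+\e'-2s}(k).
\end{eqnarray*}
By the convergence established in the previous step, the tail of the series $\sum_{k \ge 1} |c_k|^2 \s_{1+\e'-2s}(k)$ tends to $0$ as $N \to \infty$, and the bound above is uniform in $M > N$. The Cauchy criterion is therefore verified, and the series $\sum_{k \ge 1} c_k f^s_k$ converges in $L^2(\T)$.

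I do not foresee any genuine obstacle: Corollary \ref{r1} does the heavy lifting, and only the brief renormalisation of $\e$ is needed to reconcile the hypothesis (arbitrary $\e > 0$) with the restricted range $\e \le 2s-1$ in which Corollary \ref{r1} is stated. The monotonicity of $\s_u$ in $u$ makes this reconciliation immediate.
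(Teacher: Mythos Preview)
Your proof is correct and follows essentially the same route as the paper: apply Corollary \ref{r1} to the tail sums and invoke the Cauchy criterion in $L^2(\T)$. The only difference is that you explicitly handle the reduction of an arbitrary $\e>0$ to the range $0<\e\le 2s-1$ required by Corollary \ref{r1} via the monotonicity of $u\mapsto\s_u(k)$, whereas the paper leaves this implicit in its constant $C(s,\e)$.
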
 
 
\begin{proof}This is now straightforward, since by Corollary \ref{r1}
\begin{eqnarray*} 
 \sup_{n,m\ge N}\big\|\sum_{n\le k\le m}  c_k f^s_k\big\|_2^2  &\le& C(s,\e) \sum_{k\ge N}
  |c_k|^2  \s_{ 1+\e-2s}(k)  \to 0, 
 \end{eqnarray*}
as $N$ tends to infinity. Hence $\{\sum_{1\le k\le m}  c_k f^s_k, m\ge 1\}$ is a Cauchy sequence in $L^2(\T)$. 
\end{proof}See also the recent work \cite{ABSW} for a different proof. 

\begin{remark} By monotonicity  of Fourier coefficients, Corollary \ref{r2} immediately extends with no change to functions
$f\sim\sum_{j=1}^\infty a_j\sin 2\pi jx $ such that $a_j= \mathcal O(j^{-s})$, $s>1/2$. 
\end{remark}\vskip 5 pt 
 \begin{remark} Estimates (i)-(iii) also provide sharp bounds to GCD sums indexed on FC sets. Estimate  (i) with
$s=\tau=1$ further implies
  \begin{eqnarray*} 
\sum_{k,\ell\in K}c_k   c_\ell
 \frac{(k,\ell)^{2}}{k\ell}  &\le&\frac{\pi^2}{6}  \Big(\sum_{k \in K}\frac{\p(k)}{k^2} \Big)\Big(\sum_{k \in K}
  |c_k|^2  \s_{ -1}(k) \Big), \end{eqnarray*}  
where $\p(n)$ is Euler totient function.  Indeed \begin{eqnarray*} \sum_{k,\ell\in K}c_k   c_\ell
  \frac{(k,\ell)^{2}}{k\ell}   &\le& \Big(\sum_{k \in K} \frac{1}{\s(k) } \Big)\Big(\sum_{k \in K}  |c_k|^2  \s_{ -1}(k) \Big) 
 \cr  &\le&\frac{\pi^2}{6}  \Big(\sum_{k \in K}\frac{\p(k)}{k^2} \Big)\Big(\sum_{k \in K}  |c_k|^2  \s_{ -1}(k) \Big),\end{eqnarray*}
since (\cite{CMS}, p.10) ${\s(n)\p(n)} >{6{n^2}}/{\pi^2}$.
Concerning the first factor, notice that
$$\sum_{k=1}^\infty \frac{\p(k)}{k^s}= \frac{\zeta(s-1)}{\zeta(s)} \qq \quad \hbox{for $s>2$.} $$\end{remark}
 \vskip 3pt Recall for later use  that by Gronwall's estimates,
(\cite{Gr} p.\ 119--122),
\begin{equation} \label{gron}   \limsup_{n\to \infty} \frac{\log
\big(\frac{\s_\a(n)}{n^\a}\big)}{\frac{(\log n)^{1-\a}}{\log\log n}} =\frac{1}{1-\a}, \qq (0<\a<1)
\end{equation}
 Further,  \begin{equation} \label{gron1}  \limsup_{n\to \infty}\frac{\s (n)}{n\log\log n}= e^\g,
 \end{equation}
   where $\g$ is Euler's constant.
(The validity   of the seemingly slightly stronger inequality $ \s(n)  < e^\g n\log\log n  $ for all $n\ge 5041$, is known to be
equivalent to the Riemann Hypothesis.)
 \begin{remark} One can substitute in the preceding estimates to    $\s_{-1}(n)$ the prime divisor function $\o(n)$ (counting the number
of prime divisors of $n$). By
 Duncan's inequality      $  \s_{-1}(n) < \frac{7 \o(n)+10}{6}$. 
  Further,  by  Satyanarayana and Vangipuram result, if $n$ is odd and $3\!\!\not| n$, $5\!\!\not|n$, then
 $\s_{-1}(n)<   (\frac{P^-(n)+2\o(n) +21/2}{P^-(n)+3} )^{1/2}$,  
where $P^-(n)$ is the smallest prime divisor of $n$. See \cite{CMS} p.78-79.
\end{remark}
\begin{remark} A strenghtened form of Theorem-(i) \ref{p1}, involving a more delicate analysis is proved in Section \ref{s7}.
\end{remark}
 \subsection{Eigenvalues arithmetical estimates.} The recent estimates  of the eigenvalues  of the
arithmetical matrix
$$ M(K,s)= \Big\{\frac{(k,\ell)^{2s}}{k^s\ell^s}\Big\}_{  k,\ell \in K} $$ established in \cite{H,ABS,BoS} are sharp but are not of
arithmetical type. An important and quite challenging question is precisely to know  whether it is possible to provide bounds of this
type, expressed in a simple way by arithmetical functions.  In this direction,  the following GCD sum estimate established in 
\cite{BW}, (Proposition 1.13) is relevant.  
\begin{proposition}\label{prop01e} Let $0<s\le 1$.   For any $k\in K$ (letting 
$K_-=\min{K}$, $K^+= \max(K)$),
\begin{eqnarray*} \sum_{\ell\in K\atop
\ell\not =k}  \frac{(k,\ell)^{2s}}{k^s\ell^s}\le \begin{cases}
2\big(\log \frac{K_+  } {K_- } \big)\s_{-1}(k)   &\quad {\rm if}\ s =1,
\cr  2^sk ^{s-1}\big(\int^{K_+  }_{K_-  }\frac{\dd
u}{u^s}\big)
\s_{ 1-2s}(k)   &\quad {\rm if}\ 0< s<1. \end{cases}
\end{eqnarray*}
 \end{proposition}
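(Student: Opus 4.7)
The plan is to reduce the two-variable GCD sum to a one-variable tail sum by grouping terms according to $d=(k,\ell)$, and then compare that tail sum to an integral, recombining via a divisor-sum identity.

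Step 1 (GCD grouping). I would start from the Jordan-totient identity $(k,\ell)^{2s}=\sum_{d\mid(k,\ell)}J_{2s}(d)$ combined with the trivial bound $J_{2s}(d)\le d^{2s}$. Since $d\mid(k,\ell)$ iff $d\mid k$ and $d\mid\ell$, substituting $\ell=dm$ gives
\[
\sum_{\ell\in K,\,\ell\ne k}\frac{(k,\ell)^{2s}}{k^s\ell^s}\;\le\;\sum_{d\mid k}\frac{d^s}{k^s}\,S_d,\qquad
S_d\;:=\;\sum_{\substack{m\in[K_-/d,\,K_+/d]\cap\mathbb{Z}\\ m\ne k/d}}\frac{1}{m^s}.
\]
The restriction $m\ne k/d$ encodes the diagonal exclusion $\ell\ne k$.

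Step 2 (integral comparison for $S_d$). Using that $1/u^s$ is decreasing and convex, and that $(m+1)/m\le 2$, one has the pointwise bound $1/m^s\le 2^s\int_m^{m+1}du/u^s$ for every integer $m\ge 1$. Summing over the admissible $m$'s and rescaling via
\[
\int_{K_-/d}^{K_+/d}\frac{du}{u^s}\;=\;d^{s-1}\int_{K_-}^{K_+}\frac{du}{u^s},
\]
I would derive
\[
S_d\le 2^s\,d^{s-1}\int_{K_-}^{K_+}\frac{du}{u^s}\quad(0<s<1),\qquad S_d\le 2\log(K_+/K_-)\quad(s=1),
\]
where the excluded term $1/(k/d)^s$ is used to absorb the stray boundary contribution generated by the integral comparison.

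Step 3 (divisor-sum recombination). Plugging the bound of Step 2 back,
\[
\sum_{d\mid k}\frac{d^s}{k^s}S_d\;\le\;2^s\int_{K_-}^{K_+}\frac{du}{u^s}\cdot k^{-s}\sum_{d\mid k}d^{2s-1}.
\]
The change of summation variable $e=k/d$ in the divisor sum gives
\[
k^{-s}\sum_{d\mid k}d^{2s-1}\;=\;k^{s-1}\sum_{e\mid k}e^{1-2s}\;=\;k^{s-1}\sigma_{1-2s}(k),
\]
which yields the announced estimate for $0<s<1$. For $s=1$, the same computation produces the factor $\sigma_{-1}(k)$ and the integral $\int_{K_-}^{K_+}du/u=\log(K_+/K_-)$, recovering the first line.

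The main technical obstacle is Step 2. The naive bound $\sum_{m=A}^{B}1/m^s\le 1/A^s+\int_A^B du/u^s$ leaves a boundary term $1/A^s$, and I expect a short case analysis on the location of $k/d$ within $[K_-/d,K_+/d]$ (at the left endpoint, at the right endpoint, or in the interior) will be needed to show that this boundary term is uniformly dominated by the excluded term $1/(k/d)^s$, thereby delivering the clean constants $2$ and $2^s$ that appear in the statement.
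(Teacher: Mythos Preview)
The paper does not supply a proof of this proposition; it is quoted from \cite{BW}, Proposition~1.13. Your overall strategy---group by divisors $d\mid k$, bound the inner sum $S_d$ by an integral over $[K_-/d,K_+/d]$, rescale, and recombine via the divisor-sum identity $k^{-s}\sum_{d\mid k}d^{2s-1}=k^{s-1}\sigma_{1-2s}(k)$---is correct and is the natural route. (The detour through $J_{2s}$ in Step~1 is harmless but unnecessary: one may simply set $d=(k,\ell)$ and relax the condition $(k,\ell)=d$ to $d\mid\ell$.)

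The one point that needs repair is your proposed resolution of Step~2. The claim that the boundary term from the naive comparison is ``dominated by the excluded term $1/(k/d)^s$'' fails: since $k/d\ge\lceil K_-/d\rceil$, one always has $(k/d)^{-s}\le\lceil K_-/d\rceil^{-s}$, so the excluded term is the \emph{smaller} of the two, and for small $s$ the slack $(2^s-1)\int_{K_-/d}^{K_+/d}u^{-s}\,du$ is not enough to cover the difference. The clean fix is instead to split $S_d$ at the excluded index $n_0=k/d$: for $m<n_0$ use your pointwise bound $m^{-s}\le 2^s\int_m^{m+1}u^{-s}\,du$, while for $m>n_0$ use the standard $m^{-s}\le\int_{m-1}^{m}u^{-s}\,du$. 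The two blocks are then controlled by $2^s\int_{K_-/d}^{n_0}$ and $\int_{n_0}^{K_+/d}\le 2^s\int_{n_0}^{K_+/d}$ respectively, and summing gives $S_d\le 2^s\int_{K_-/d}^{K_+/d}u^{-s}\,du$ with no case analysis. The same split handles $s=1$, yielding $S_d\le 2\log(K_+/K_-)$; your Step~3 then goes through unchanged.
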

   It is easy to derive  eigenvalues estimates of $M(K,s)$ for  $K$  arbitrary. \begin{corollary}\label{cor01}  Let $0<s\le 1$. Let
$\lambda(k,s), k\in K$  be the eigenvalues of 
$M(K,s)$. Then for any
$k\in K$,
  \begin{eqnarray*}|\lambda(k,s)-\zeta(2s)|\le \begin{cases}
2\big(\log \frac{K_+  } {K_- } \big)\displaystyle{\sup_{k\in K}} \s_{-1}(k)   &\quad {\rm if}\ s =1,
\cr   2^s (\frac{K_+}{K_-})  ^{1-s}
\displaystyle{\sup_{k\in K}} \s_{ 1-2s}(k)   &\quad {\rm if}\ s<1. \end{cases}
\end{eqnarray*}
 \end{corollary}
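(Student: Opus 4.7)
The statement is an eigenvalue-localisation bound for the symmetric (in fact positive semidefinite, by (\ref{HS1})) matrix $M(K,s)$, and the natural tool is Gershgorin's circle theorem. The plan is: (i) read off the diagonal entries of $M(K,s)$, (ii) bound the sum of absolute off-diagonal entries in each row by Proposition \ref{prop01e}, and (iii) pass to the supremum of these Gershgorin radii over $k\in K$ to obtain a single uniform bound valid for every eigenvalue $\lambda(k,s)$.

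The diagonal entries of $M(K,s)$ are $(k,k)^{2s}/k^{2s}=1$, so a direct application of Gershgorin's theorem gives $|\lambda(k,s)-1|\le R_k$ for some $k\in K$, where $R_k=\sum_{\ell\in K,\ \ell\neq k}(k,\ell)^{2s}/(k^s\ell^s)$; the centering at $\zeta(2s)$ announced in the corollary is read through the Gram normalisation of $\{f^s_k\}_{k\in K}$ supplied by (\ref{HS1}), whose Gram matrix $\zeta(2s)\,M(K,s)$ has diagonal $\zeta(2s)$. Proposition \ref{prop01e} immediately yields
\[
R_k \le \begin{cases} 2\log(K_+/K_-)\,\s_{-1}(k) & (s=1),\\[4pt] 2^s\,k^{s-1}\bigl(\int_{K_-}^{K_+}u^{-s}\dd u\bigr)\s_{1-2s}(k) & (0<s<1).\end{cases}
\]
In the second line I will use $k\ge K_-$ to replace $k^{s-1}$ by $K_-^{s-1}$ and evaluate the integral, so that $k^{s-1}\int_{K_-}^{K_+}u^{-s}\dd u\le (K_+/K_-)^{1-s}/(1-s)$, matching the advertised form up to the implicit constant.

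To finish, I will take the supremum of $\s_{-1}(k)$ (resp.\ $\s_{1-2s}(k)$) over $k\in K$, which makes the Gershgorin radius uniform in $k$ and yields the claimed estimate simultaneously for every eigenvalue of $M(K,s)$. The only real point of care is the bookkeeping in Step (ii) for $0<s<1$: tracking the factor $k^{s-1}$ against $K_-^{s-1}$ so that the integral collapses cleanly into the ratio $(K_+/K_-)^{1-s}$ with the $1/(1-s)$ absorbed into the constant, and reconciling the Gershgorin centering (at $1$ for $M(K,s)$ itself) with the Gram normalisation that produces the $\zeta(2s)$ in the stated form. Apart from these bookkeeping points, the argument is routine and contains no genuine analytic obstacle, since Proposition \ref{prop01e} already does all the GCD-sum work.
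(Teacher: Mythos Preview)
Your approach is exactly the paper's: Ger\v sgorin's theorem plus Proposition~\ref{prop01e}, then the supremum over $k\in K$. Your bookkeeping remarks are apt: the paper's proof also jumps to the centering at $\zeta(2s)$ without comment (the diagonal of $M(K,s)$ being $1$, this appears to be a slip in the statement rather than something your Gram-normalisation reading needs to repair), and it likewise suppresses the factor $1/(1-s)$ you isolate when simplifying $k^{s-1}\int_{K_-}^{K_+}u^{-s}\,\dd u$.
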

 Gronwall's estimates (\ref{gron}) further allow to   provide quantitative bounds.  \begin{proof} We apply Ger\v sgorin's theorem stating that the eigenvalues of an
$n\times n$ matrix $(a_{i,j })$ with complex entries lie in the union of the closed disks (Ger\v sgorin disks)
\begin{equation}|z-a_{i,i}| \le \sum_{j=1\atop 
j\not =i}^n |a_{i,j }| \qq\quad  (i=1,2,\ldots , n) \label{Ger} 
  \end{equation} 
in the complex plane, see for instance \cite{Va}.
 Hence
\begin{equation*} |\lambda(k,s)-\zeta(2s)| \le \sup_{k\in K} \sum_{\ell\in K\atop 
\ell\not =k}  \frac{(k,\ell)^{2s}}{k^s\ell^s} . 
  \end{equation*} 
Applying  Proposition \ref{prop01e} and  noticing that when $s<1$,
$$ \sum_{\ell\in K\atop \ell\not =k}  \frac{(k,\ell)^{2s }}{k^s
\ell^s }\le   2^s (\frac{K_+}{K_-})  ^{1-s}
\s_{ 1-2s}(k) ,$$
allows to conclude.  \end{proof}
When combined with the classical weighted estimate for quadratic forms:

{\it  For any system of complex numbers $\{x_i\}$ and  $ \{\a_{i,j}\}$},
 \begin{eqnarray}\label{wqe} \Big|  \sum_{ 1\le i,j\le n\atop i\not= j}  x_ix_j\a_{i,j} \Big|\le  \frac1{2}\sum_{ i=1}^n |x_i|^2\Big(  \sum_{\ell
=1\atop
\ell\not = i }^n (|\a_{i,\ell} | +|\a_{
\ell ,i} |) \Big) ,
\end{eqnarray}
Proposition \ref{prop01e} immediately implies that
  \begin{eqnarray}\label{61}\qq    \big\|\sum_{k\in K} c_k f^s _k\big\|_2^2  \le\begin{cases}
  2^s (\frac{K_+}{K_-})  ^{1-s}  \sum_{k\in K} \s_{1-2s}(k) c_k^2&\quad \hbox{if $1/2< s\le 1$}\cr
 2\big(\log \frac{K_+  } {K_- } \big)\sum_{k\in K} d(k)c_k^2 &\quad \hbox{if $s=1/2$},  \end{cases}\end{eqnarray}
as observed in \cite{BW}. These estimates turn up to   be  of   crucial  use in the  last part of the proof of
Theorem \ref{t1a}.
\begin{remark} Let $p_1,\ldots, p_N$ be distinct prime numbers and let $G=\langle p_1,\ldots, p_N\rangle$ the associated  multiplicative semi-group. A simple consequence of (\ref{wqe}) is also that 
\begin{eqnarray*} \sum_{i,j=1}^Nx_i\overline{x}_j \frac{(n_{i},n_{j})^{2\a }}{n_{i}^\a
n_{j}^\a }  &\le  & C\prod_{i=1}^N\Big( \frac{1}{1- p_i^{-\a      
}}\Big)\, \sum_{i =1}^N|x_i|^2, 
\end{eqnarray*}
for any $\a>0$, any $n_1, \ldots, n_N\in G$ and any complex numbers $x_1, \ldots, x_N$. And the constant $C$ is absolute. \end{remark}
 
\subsection{An arithmetical $\boldsymbol \O$-theorem for the Zeta function.}

We  derive from a recent result of Hilberdink  \cite{H} (see Theorem 3.3) an arithmetical type $\Omega$-result for the Riemann Zeta
function which is, to our knewledge, the first of this kind in the theory.  \begin{theorem} \label{OFC1}Let $\s>1/2$. There exist 
a positive constant $c_\s$ depending on $\s$ only and a  positive absolute constant $C$, such that 
for any FC set
$K$ such that 
   $$K^+=
\max\{ k, k\in K\}
\le  T , \qq  \max_{ k,\ell \in K}\ \frac{k\vee \ell }{(k,\ell)}\ge  c_\s, $$
and any $0\le
\e <\s$,\begin{eqnarray*}
   (1+ C)\Big(\sum_{n\in K } \frac{1}{n^{ 2\e}}  \Big)  \ \max_{1\le t\le T}  |\zeta(\s + it)|^2
  &  \ge & \frac{\zeta(2\s)}{2}
 \sum_{n\in K }\frac{1}{n^{2\e}}\s_{-s+\e}( n)^2  
  \cr & &  - \frac{1}{T^{ 2\s-1}}  \Big(\sum_{  \ell \in K } \frac{1}{\ell^{ \e}} \Big) \Big(\sum_{  k \in K }      k ^{1-\s-\e }\log ( kT)  \Big\}.
\end{eqnarray*} 
   \end{theorem}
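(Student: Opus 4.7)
The strategy is to apply Hilberdink's Theorem 3.3 from \cite{H} through identity (\ref{hi}), using the resonating coefficients $a_d = d^{-\e}\mathbf{1}_{\{d\in K\}}$ tailored to the FC structure of $K$. Since $K$ is factor closed, every divisor of $n\in K$ again lies in $K$, so for such $n$
\begin{equation*}
 b_n \,=\, n^{-\s}\sum_{d\mid n} d^\s a_d \,=\, n^{-\s}\s_{\s-\e}(n) \,=\, n^{-\e}\s_{-\s+\e}(n),
\end{equation*}
whence $\sum_{n\in K}|b_n|^2 = \sum_{n\in K} n^{-2\e}\s_{-\s+\e}(n)^2$ coincides with the main-term quantity on the right-hand side of Theorem \ref{OFC1}. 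The resonator norm $\|a\|_2^2 = \sum_{n\in K} n^{-2\e}$ matches the factor on the left.

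By (\ref{hi}) applied with $N=T$ (admissible since $K^+\le T$), the same sum also equals
\begin{equation*}
 \sum_{m,n\in K} a_m a_n\,\frac{(m,n)^{2\s}}{(mn)^\s}\,{\sum_{k\le T/[m,n]}}^{*}\,\frac{1}{k^{2\s}} \,=\, \zeta(2\s)\,G(K,\e)\,-\,\mathcal R,
\end{equation*}
where $G(K,\e) = \sum_{m,n\in K}a_m a_n(m,n)^{2\s}/(mn)^\s$ is the full GCD quadratic form and $\mathcal R$ is the deficit from the truncation in $k$. Under the spread hypothesis $\max_{k,\ell\in K}(k\vee\ell)/(k,\ell)\ge c_\s$, Hilberdink's Theorem 3.3 supplies a resonance bound $G(K,\e)\le \frac{2(1+C)}{\zeta(2\s)}\|a\|_2^2\,\max_{1\le t\le T}|\zeta(\s+it)|^2$ with $C$ an absolute constant. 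Chaining these and dividing by $2$ yields the statement, pending identification of $\mathcal R$ with the claimed error term.

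The final step is the bookkeeping of $\mathcal R$. Using the elementary tail bound $\sum_{k>x}k^{-2\s}\ll x^{1-2\s}/(2\s-1)$ together with the factorization $(m,n)^{2\s}[m,n]^{2\s-1}/(mn)^\s = (m,n)(mn)^{\s-1}$ yields
\begin{equation*}
 \mathcal R \,\ll\, T^{1-2\s}\sum_{m,n\in K}(m,n)\,m^{\s-1-\e}\,n^{\s-1-\e}.
\end{equation*}
To recover the asymmetric product form $(\sum_\ell \ell^{-\e})(\sum_k k^{1-\s-\e}\log(kT))$ of the claim, one splits $(m,n)\le \min(m,n)$ and dyadically decomposes according to the size of $[m,n]$; the $\log(kT)$ factor then arises from summing over the $\asymp \log T$ scales where $[m,n]$ reaches up to $T$, while the FC property of $K$ ensures that this decomposition commutes cleanly with the divisor structure. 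The principal obstacle is precisely this dyadic bookkeeping, since the naive diagonal bound $(m,n)\le\sqrt{mn}$ would produce a square of a single sum rather than the asserted product of two distinct ones.
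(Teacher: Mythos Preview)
Your identification of the resonating coefficients $a_d=d^{-\e}\mathbf{1}_{d\in K}$ and the computation $b_n=n^{-\e}\s_{-\s+\e}(n)$ are correct and match the paper. The gap is in your accounting of the error term.

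You invoke Hilberdink's Theorem~3.3 as the clean bound $G(K,\e)\le \frac{2(1+C)}{\zeta(2\s)}\|a\|_2^2\max_{1\le t\le T}|\zeta(\s+it)|^2$ and then try to generate the error term $T^{1-2\s}\big(\sum_\ell \ell^{-\e}\big)\big(\sum_k k^{1-\s-\e}\log(kT)\big)$ by bounding the truncation remainder $\mathcal R$ in~(\ref{hi}). This misplaces the source of the error. In the paper, the resonance inequality itself (Corollary~\ref{zeta3}) already carries the error term:
\[
(1+C)\|a\|_2^2\max_{1\le t\le T}|\zeta(\s+it)|^2 \ \ge\ \Big(\zeta(2\s)-\frac{C_\s}{\varpi(K)^{2\s-1}}\Big)G \ -\ T^{1-2\s}\Big(\sum_\ell|c_\ell|\Big)\Big(\sum_k|c_k|k^{1-\s}\log(kT)\Big),
\]
and the $\log(kT)$ factor arises in Lemma~\ref{zeta1} from the off-diagonal sum $\sum_{km\neq\ell n}(nm)^{-\s}|\log\tfrac{km}{\ell n}|^{-1}$ (the $S_2$ estimate leading to~(\ref{rectangle})), not from any tail of $\sum_k k^{-2\s}$. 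By contrast, your $\mathcal R$ is nonnegative (since $a_d\ge 0$), so in your chain $\zeta(2\s)G=\sum|b_n|^2+\mathcal R$ it only \emph{helps} the lower bound and requires no estimation at all; the paper simply uses $\sum|b_n|^2\le\zeta(2\s)G$. Your dyadic sketch cannot manufacture a $\log(kT)$ from $\sum_{k>T/[m,n]}k^{-2\s}\ll ([m,n]/T)^{2\s-1}$, and indeed no such logarithm is present there.

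In short: the paper's route is (i)~derive Corollary~\ref{zeta3} via the zeta-integral approximation of Lemma~\ref{zeta1} plus Montgomery--Vaughan, which is where both the spread condition $\varpi(K)\ge c_\s$ and the $\log(kT)$ error originate; then (ii)~apply~(\ref{hi}) only as the one-line lower bound $\zeta(2\s)G\ge\sum_{n\in K}n^{-2\e}\s_{-\s+\e}(n)^2$. Your proposal swaps the roles, treating the hard analytic step as a black box and then looking for the error in the wrong place.
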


\begin{theorem}\label{OFC2}Let $\s>1/2$. For any   integer $\nu\ge 2$ such that $ \max_{ [k,\ell ]|\nu}\ \frac{ (k\vee
\ell) }{(k, \ell)}\ge c_\s$,   and $0\le \e <\s$, we have 
  \begin{eqnarray*}
    \max_{1\le t\le T}  |\zeta(\s + it)| 
   &  \ge & c \Big(
\frac{1}{ \s_{-2\e} (\nu)}\sum_{n|\nu}\frac{\s_{-s+\e}( n)^2}{n^{2\e}} \Big)^{1/2}
    ,
\end{eqnarray*}
whenever $\nu$ and $T$ are such that
$$ \frac{\s_{-\e} (\nu)\s_{1-\s-\e} (\nu) \log ( \nu T)}{  \sum_{n|\nu}\frac{\s_{-s+\e}(
n)^2}{n^{2\e}}  }\le \frac{\zeta(2\s)^{1/2}}{4}  T^{(2\s-1)}.   $$
Here  $c =\frac{\zeta(2\s)}{2\sqrt{1+C}   }$ and $c_\s, C$ are as in Theorem \ref{OFC1}.   \end{theorem}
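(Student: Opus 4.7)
The natural strategy is to invoke Theorem~\ref{OFC1} with the specific choice $K = \{d : d \mid \nu\}$, which is manifestly an FC set (the divisors of $\nu$ are closed under taking further divisors). With this choice, $K^+ = \nu$, and the max--LCM condition $\max_{k,\ell\in K}\frac{k\vee\ell}{(k,\ell)}\ge c_\s$ becomes exactly the hypothesis $\max_{[k,\ell]\mid \nu}\frac{k\vee\ell}{(k,\ell)}\ge c_\s$, since for divisors of $\nu$ one has $k,\ell\mid\nu \iff [k,\ell]\mid\nu$. The admissibility $\nu\le T$ we expect to be absorbed into (or follow from) the quantitative hypothesis relating $\nu$ and $T$.

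Once $K$ is fixed as the divisor set of $\nu$, the three weighted sums appearing in Theorem~\ref{OFC1} collapse into divisor sums of $\nu$:
\begin{align*}
\sum_{n\in K}\frac{1}{n^{2\e}} &= \s_{-2\e}(\nu),\qquad
\sum_{\ell\in K}\frac{1}{\ell^{\e}} = \s_{-\e}(\nu),\\
\sum_{k\in K}k^{1-\s-\e}\log(kT) &\le \log(\nu T)\,\s_{1-\s-\e}(\nu),
\end{align*}
where the last bound uses $k\le\nu$ inside the $\log$. Substituting into the conclusion of Theorem~\ref{OFC1} and setting $S:=\sum_{n\mid\nu}\frac{\s_{-s+\e}(n)^2}{n^{2\e}}$, I obtain
\[
(1+C)\,\s_{-2\e}(\nu)\,\max_{1\le t\le T}|\zeta(\s+it)|^2 \ \ge\ \frac{\zeta(2\s)}{2}\,S \ -\ \frac{\s_{-\e}(\nu)\,\s_{1-\s-\e}(\nu)\,\log(\nu T)}{T^{2\s-1}}.
\]

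The quantitative hypothesis on $(\nu,T)$ is precisely designed so that the error term is absorbed. Rewritten, that hypothesis asserts
\[
\frac{\s_{-\e}(\nu)\,\s_{1-\s-\e}(\nu)\,\log(\nu T)}{T^{2\s-1}}\ \le\ \frac{\zeta(2\s)^{1/2}}{4}\,S,
\]
so using $\zeta(2\s)\ge 1$ for $\s>1/2$ (hence $\zeta(2\s)^{1/2}\le \zeta(2\s)$), the error is bounded by $\tfrac{\zeta(2\s)}{4}S$, leaving at least $\tfrac{\zeta(2\s)}{4}S$ on the right-hand side. Dividing by $(1+C)\s_{-2\e}(\nu)$ and taking square roots gives
\[
\max_{1\le t\le T}|\zeta(\s+it)| \ \ge\ \frac{\sqrt{\zeta(2\s)}}{2\sqrt{1+C}}\,\Bigl(\frac{1}{\s_{-2\e}(\nu)}\,S\Bigr)^{1/2},
\]
which is the stated inequality (the constant $c$ should be read as $\sqrt{\zeta(2\s)}/(2\sqrt{1+C})$).

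The proof is essentially bookkeeping: the real content sits in Theorem~\ref{OFC1}. The only mildly delicate points are (i) verifying that the sum estimate $\sum_{k\mid\nu}k^{1-\s-\e}\log(kT)\le \log(\nu T)\s_{1-\s-\e}(\nu)$ is the intended way to bound that sum (straightforward since $k\le\nu$), and (ii) ensuring that the implicit requirement $\nu\le T$ from Theorem~\ref{OFC1} is consistent with the stated hypothesis, which I expect to hold because the right-hand side of the hypothesis forces $T$ to grow at least polynomially in $\nu$ (at rate $\nu^{1/(2\s-1)}$ up to logarithmic and arithmetic factors). Apart from this consistency check, no additional analysis is required.
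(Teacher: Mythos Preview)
Your proof is correct and follows essentially the same route as the paper's own argument: specialize Theorem~\ref{OFC1} to the FC set $K=\{d:d\mid\nu\}$, rewrite the resulting sums as divisor sums of $\nu$, and use the quantitative hypothesis to absorb the error term. You also correctly spot that the constant should read $c=\sqrt{\zeta(2\s)}/(2\sqrt{1+C})$ rather than $\zeta(2\s)/(2\sqrt{1+C})$, matching the paper's own computation $c^2=\zeta(2\s)/(4(1+C))$; the only point the paper states explicitly that you leave as a consistency check is the side condition $\nu\le T$.
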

 By taking $\nu$ a product of primes, it is easy to recover Hilberdink's result from Theorem \ref{OFC2}.

\section{Almost Everywhere Convergence Results.}
   We first
apply Theorem
\ref{t1} to almost everywhere convergence. We obtain new convergence conditions of mixed type, namely multipliers  partly expressed by arithmetical
functions. We will prove 
\begin{theorem}\label{c1a}   Assume that $a_m= \mathcal O(m^{-\a} )$ for some $\a>1/2$.
\vskip1 pt \noi i) Let  $1/2<\a<1$. Then the series $ \sum_{k\ge 1} c_k f_k$ converges almost everywhere whenever  the following
condition is satisfied,
\begin{eqnarray*}  \sum_{ k\ge 3} c_k^2 (\log k)^{4(1-\a)}({\log\log k})^{2(1-\a)} d(k^2)<\infty .   
      \end{eqnarray*}
 
\vskip1 pt \noi ii) Let $\a=1$. Then the same conclusion holds true  if  the above condition  is replaced by 
\begin{eqnarray*}  \sum_{ k\ge 3} c_k^2  d(k^2) (\log\log k)^2   <\infty .   
      \end{eqnarray*}

\vskip1 pt \noi iii) Assume that $a_m= \mathcal O(m^{-1/2}(\log m)^{-(1+h)/2} )$ for some $h>1$. Then the same conclusion holds true  under
the following condition
\begin{eqnarray*}\sum_{k\ge 3}  c_k^2d(k^2)   (\log k)^2  (\log\log k)^{1-h}
  <\infty .  
 \end{eqnarray*}
\end{theorem}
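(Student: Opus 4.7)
The plan is to take Theorem~\ref{c1} as the $L^2$ backbone,
\[
\Bigl\|\sum_{k\in K}c_kf_k\Bigr\|_2^2 \le C_f\sum_{k\in K}c_k^2\,d(k^2),\qquad C_f=\sum_{m\ge 1}a_m^2 d(m),
\]
and to pass from $L^2$ to almost everywhere convergence via a Rademacher-Menshov maximal inequality refined through a Fourier-dyadic decomposition of $f$. One first verifies that $C_f<\infty$ in each of the three hypotheses: for (i) and (ii) this is immediate from $a_m=\mathcal O(m^{-\alpha})$ with $\alpha>1/2$ together with $d(m)=\mathcal O(m^\varepsilon)$, and for (iii) one uses $\sum_{m\le x}d(m)\sim x\log x$ (together with Abel summation) to reduce $C_f$ to an integral of the form $\int_e^\infty x^{-1}(\log x)^{-h}\,dx$, which converges as soon as $h>1$.

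A direct combination of Theorem~\ref{c1} with the Rademacher-Menshov maximal inequality on dyadic $k$-blocks $\Delta_j=[2^j,2^{j+1})$ produces a maximal $L^2$ bound with the standard $(\log|\Delta_j|)^2\sim j^2$ overhead, giving a.e.\ convergence only under the sub-optimal condition $\sum_kc_k^2d(k^2)(\log k)^2<\infty$. To reach the sharper weights of (i)-(iii), the idea is to exploit the pointwise decay $a_m=\mathcal O(m^{-\alpha})$ beyond the single integrated constant $C_f$. Split $f=\sum_rg_r$ along Fourier dyadic blocks $g_r(x)=\sum_{m\in[2^r,2^{r+1})}a_m\sin 2\pi mx$, and apply Theorem~\ref{c1} to each $g_r$ separately:
\[
\Bigl\|\sum_{k\in\Delta_j}c_kg_r(kx)\Bigr\|_2^2 \le C\,r\cdot 2^{-r(2\alpha-1)}\sum_{k\in\Delta_j}c_k^2d(k^2),
\]
with the corresponding logarithmic expression for the decay rate of (iii). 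On each $\Delta_j$ one introduces a cut-off $R_j$, bounds the Fourier head $\sum_{r\le R_j}g_r$ by a maximal inequality exploiting the bounded trigonometric degree $\lesssim 2^{R_j+1}$, and the Fourier tail $\sum_{r>R_j}g_r$ by a direct $L^2$ estimate which is summable in $r$. Optimizing $R_j$ to balance the two contributions should produce precisely the announced weights: $R_j\sim c\log j$ for $\alpha=1$ yields $(\log\log k)^2 d(k^2)$ in (ii); an intermediate balance for $1/2<\alpha<1$ yields $(\log k)^{4(1-\alpha)}(\log\log k)^{2(1-\alpha)}d(k^2)$ in (i); and an analogous balance with the logarithmic correction for (iii) yields $(\log k)^2(\log\log k)^{1-h}d(k^2)$.

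The main obstacle is the maximal inequality used on the Fourier head, which must be sharp enough to avoid incurring the full $(\log k)^2$ Rademacher-Menshov cost. The head being a trigonometric polynomial of controlled Fourier degree, one anticipates that a Carleson-Hunt type maximal inequality in the variable $x$, or alternatively a careful arithmetic partitioning of $\Delta_j$ according to the divisor structure captured by $d(k^2)$, yields only a Rademacher-Menshov-like cost $R_j^2\sim(\log\log k)^2$ in case (ii). A secondary difficulty is the careful bookkeeping of logarithmic side-factors (including the extra $\log m$ coming from the average order of $d(m)$) so that the optimized weight reassembles exactly into the expressions $(\log k)^{4(1-\alpha)}(\log\log k)^{2(1-\alpha)}$, $(\log\log k)^2$, and $(\log k)^2(\log\log k)^{1-h}$ announced in (i)-(iii).
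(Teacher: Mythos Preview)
Your plan is essentially the paper's: split $f=R^J+r^J$ at a Fourier cutoff $J=J(j)$, control the head by Carleson--Hunt and the tail by Theorem~\ref{c1} combined with a chaining maximal inequality (the paper's Lemma~\ref{entro}, which plays the role of your Rademacher--Menshov step), then optimise $J$ against the $k$-block. Two execution points the paper makes precise and that you should adopt: first, your ``main obstacle'' is resolved by applying Carleson--Hunt termwise in $m$, namely $\big\|\sup_{u\le v}\big|\sum_{u\le k\le v}c_kR^J_k\big|\big\|_2\le \sum_{m\le J}|a_m|\,\big\|\sup_{u\le v}\big|\sum_{u\le k\le v}c_ke_{mk}\big|\big\|_2\le C\big(\sum_{m\le J}|a_m|\big)\big(\sum c_k^2\big)^{1/2}$, so the head cost is exactly $(\sum_{m\le J}|a_m|)^2$; second, the $k$-blocks $[N_j,N_{j+1}]$ are \emph{not} dyadic but grow fast enough (e.g.\ $\log\log N_j\sim j^2$ in case~(ii)) so that a separate Borel--Cantelli step, using only Theorem~\ref{c1} and Chebyshev, yields $\sum_j|S_{N_{j+1}}-S_{N_j}|<\infty$ a.e.\ --- with your dyadic $\Delta_j$ this inter-block convergence would not follow from $\sum_j\|\sup_{\Delta_j}|\cdot|\|_2^2<\infty$ alone.
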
 
  These  arithmetical conditions are   meaningful for coefficient sequences supported by sets of integers $k$
having few divisors. 
In      \cite{BW} Theorem 2.8, we showed that the condition
 \begin{eqnarray*} 
 \sum_{ k\ge 1} c_k^2(\log k)^2 \s_{1-2\a}({ k} ) <\infty,
\end{eqnarray*}
also implies the  convergence almost everywhere of the series $ \sum_{k\ge 1} c_k f_k$. Although not exactly comparable with the condition given in $i$),  this one
yields a better condition for coefficient sequences supported by   integers with few divisors. A similar remark holds
  concerning the general condition given in  \cite{BW} (see  Corollary 2.6 and Remark 2.7). The condition given in (ii) has to be compared
with the one in Theorems \ref{t1a}, \ref{t1ab}.
  
   As to  (ii) and (iii),  the non-arithmetical factors of the multipliers are significantly better  than those in Theorem \ref{t1a}, and Theorem 1.1 in \cite{We},  respectively. Recall concerning (ii)  
that condition (see Theorems 3,7 in \cite{ABS})
   \begin{eqnarray*}  \sum_{ k\ge 3} c_k^\g  (\log\log k)^\g   <\infty ,  
      \end{eqnarray*}
for  $\g<2$ is necessary for the  convergence almost everywhere of the series $ \sum_{k\ge 1} c_k f_k$.
\vskip 3 pt
We further prove the following   almost everywhere convergence result concerning the Banach space ${\rm BV}(\T)$ of functions with bounded variation.    
 \begin{theorem}\label{t1a} Let $f\in {\rm BV}(\T)$,
$\langle f,1\rangle=0$. Assume that 
\begin{eqnarray} \label{coeff2}  \sum_{k\ge 3}  c_k^2\frac{(\log\log k)^4}{(\log\log \log k)^2}  <\infty . 
\end{eqnarray}
Then the series $ \sum_{k } c_kf_k$ converges almost everywhere. 
\end{theorem}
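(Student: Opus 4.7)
The plan is to reduce the problem to the prototype $f=f^1$ and then combine a dyadic block / Rademacher-Menshov decomposition with a coefficient-magnitude sieve applied on each block, feeding in the GCD-sum estimates of Section 2 (in particular Corollary \ref{r1} and the eigenvalue estimate (\ref{61})) on each level set. For the reduction step, since $f\in \mathrm{BV}(\T)$ has zero mean and Fourier coefficients of order $O(1/j)$ with essentially monotone envelope, an Abel summation on frequencies transfers norm bounds for $\sum_k c_k f^1_k$ to bounds for $\sum_k c_k f_k$ up to a constant $C(f)$; the same transfer applies to the relevant maximal functions in $x$. Thus one may assume $f=f^1$ throughout the rest of the argument.

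Next, set $B_r=(2^r,2^{r+1}]$ and, following the Rademacher-Menshov strategy, reduce the a.e.\ convergence of $\sum_k c_k f^1_k$ to the estimate $\sum_r \|T_r^*\|_2^2<\infty$, where $T_r^*=\max_{n\in B_r}\big|\sum_{k\le n,\,k\in B_r} c_k f^1_k\big|$. Within each $B_r$ I sieve the indices by coefficient magnitude, partitioning $B_r$ into level sets $B_{r,j}=\{k\in B_r:\,2^{-j-1}\rho_r<|c_k|\le 2^{-j}\rho_r\}$ with $\rho_r=\max_{k\in B_r}|c_k|$ and cardinalities $M_{r,j}$. On each $B_{r,j}$ the coefficients are comparable to the constant $2^{-j}\rho_r$, so either Corollary \ref{r1} applied to the factor closed hull of $B_{r,j}$ with parameter $\e$ tuned like $1/\log\log r$ (exploiting $\s_{-1+\e}(k)\le k^\e\s_{-1}(k)$ on $B_r$), or the G\'al-type bound coming from (\ref{61}) after passage to the FC hull, produces a norm estimate of shape $\|\sum_{k\in B_{r,j}}c_k f^1_k\|_2^2\ll (2^{-j}\rho_r)^2\, M_{r,j}(\log\log M_{r,j})^2$. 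An additional Rademacher-Menshov factor $(\log M_{r,j})^2$ then accommodates the inner maximum over $n\in B_r$.

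Summing over $j$ at fixed $r$, the factor $(2^{-j}\rho_r)^2 M_{r,j}$ rebuilds $\sum_{k\in B_r}|c_k|^2$, and after optimizing the sieve scale the surviving logarithmic losses contribute a per-block multiplier of size $(\log r)^4/(\log\log r)^2$. Translating $r\asymp \log k$ yields exactly the hypothesized condition (\ref{coeff2}) on the $c_k$, hence $\sum_r \|T_r^*\|_2^2<\infty$ and the a.e.\ convergence follows.

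The main obstacle is the numerology underlying the previous paragraph: the $(\log\log\log k)^{-2}$ improvement over \cite{ABS} only materialises through an optimal trade-off between the parameter $\e$ in Corollary \ref{r1} and the sieve-level index $j$, while losing no more than $(\log M_{r,j})^2$ in the Rademacher-Menshov step. A secondary technical difficulty is that the level sets $B_{r,j}$ are not themselves FC, so one must pass to their factor closed hulls without blowing up the GCD-sum bound; this is precisely where the sharp FC-set results Theorem \ref{p1} and Proposition \ref{prop01e} become decisive, and where the strengthened form of Theorem \ref{p1}(i) announced for Section \ref{s7} is presumably needed to make the bookkeeping tight enough to reach the advertised condition.
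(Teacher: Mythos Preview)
Your proposal has a genuine gap: it omits the two ingredients that actually drive the paper's proof, namely the \emph{frequency splitting} $f=R^J+r^J$ and the use of the \emph{Carleson--Hunt maximal inequality} on the low-frequency part $R^J$. Without these, the numerology you sketch cannot close. On a dyadic block $B_r=(2^r,2^{r+1}]$ of size $\sim 2^r$, any Rademacher--Menshov-type control of the inner maximum $\max_{n\in B_r}$ costs a factor $(\log |B_r|)^2\sim r^2\sim(\log k)^2$, regardless of how you sieve coefficients. Even with the optimal choice $\e\sim 1/\log\log k$ in Corollary~\ref{r1} (which, via Gronwall, makes $\s_{-1+\e}(k)$ bounded and yields only a $1/\e\sim\log\log k$ loss on the norm), the maximal factor $r^2$ remains, and your per-block multiplier is at best $(\log k)^2\log\log k$, not $(\log\log k)^4/(\log\log\log k)^2$. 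The coefficient-magnitude sieve does not help here: if all $c_k$ on $B_r$ are equal there is a single level set with $M_{r,0}\sim 2^r$, and your own stated factor $(\log M_{r,0})^2(\log\log M_{r,0})^2\sim r^2(\log r)^2$ is already far too large.

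The paper instead blocks along a very sparse sequence $N_j=e^{e^{j^B}}$ and, on each block, splits $f=R^J+r^J$ at a frequency cutoff $J$. The point is that Carleson--Hunt gives the supremum over partial sums of $\sum c_k R^J_k$ \emph{for free} (no Rademacher--Menshov loss), at the cost only of the $\ell^1$-sum $\sum_{\ell\le J}|a_\ell|\sim\log J$. The tail $r^J$ does incur the entropy/Rademacher--Menshov factor $(\log N_{j+1})^2$, but its norm is tiny: the $(\e,1-\e)$ trick combined with Corollary~\ref{r1} and Gronwall's bound on $\s_{-1+2\e}$ produces a decay $J^{-\e}$ that absorbs this factor completely once $\e$ and $J$ are chosen so that $\e\log J\sim\log\log N_{j+1}$ and $\e\sim(\log\log\log N_{j+1})/\log\log N_{j+1}$. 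The resulting $(\log J)^2\sim(\log\log k)^4/(\log\log\log k)^2$ is exactly the multiplier in \eqref{coeff2}. Finally, note that the strengthened form of Theorem~\ref{p1} in Section~\ref{s7} plays no role in the proof of Theorem~\ref{t1a}; what is needed beyond Carleson--Hunt and Corollary~\ref{r1} is estimate~\eqref{61} (from Proposition~\ref{prop01e}) for a delicate overlap argument handling the $R^J$-contribution along the subsequence $N_j$.
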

This slightly improves
 Theorem 3 in \cite{ABS} ($n_k=k$), where it was assumed that the series
\begin{eqnarray} \label{abs4} 
    \sum_{k=1}^\infty c_k^2 \, (\log\log k)^\g     
\end{eqnarray}
converges for some $\g>4$.    
\vskip 3 pt
 We will also prove the following rather delicate result where multipliers have arithmetical factors. \begin{theorem}\label{t1ab} Let $f\in {\rm BV}(\T)$,
$\langle f,1\rangle=0$. Assume that for some real $b>0$, 
\begin{eqnarray} \label{coeff1}
 \sum_{  k \ge 3}
   c_k^2  (\log\log k)^{2 + b} \s_{ -1+\frac{1}{(\log\log k)^{  b/3}}  }(k)      <\infty . 
\end{eqnarray}
Then the series $ \sum_{k } c_kf_k$ converges almost everywhere. 
\end{theorem}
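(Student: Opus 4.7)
The plan is to derive a.e.\ convergence from a maximal $L^2$-estimate on partial sums by combining Corollary \ref{r1} (in the $BV$ form given in the Remark following it) with a $\log\log k$-scale block decomposition and a Rademacher--Menshov maximal inequality. The novel feature is that the arithmetical multiplier $\s_{-1+\e}(k)$ supplied by Corollary \ref{r1} depends on a free parameter $\e$, which I let depend on the block scale, tuned so as to match the hypothesized $(\log\log k)^{2+b}$ weight exactly.

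First, partition $\N$ into super-dyadic blocks $J_n := \{k \in \N : e^{e^n} \le k < e^{e^{n+1}}\}$, on which $\log\log k \asymp n$. For any sub-interval $I \subset J_n$, Corollary \ref{r1} applied with $\e = \e_n := n^{-b/3}$ yields
\begin{equation*}
\Big\|\sum_{k \in I} c_k f_k\Big\|_2^2 \le C(f)\, n^{b/3}\sum_{k \in I} c_k^2\, \s_{-1+\e_n}(k),
\end{equation*}
uniformly in $I$, with constant depending only on $f$. The uniformity in $I$ is what makes the subsequent maximal inequality possible.

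Second, apply Rademacher--Menshov inside each $J_n$. A naive application on the $|J_n| \approx e^{e^{n+1}}$ individual terms would produce a ruinous factor $(\log|J_n|)^2 \approx e^{2n} \asymp (\log k)^2$. To avoid this, I work at the level of the $\approx e^n$ sums over dyadic sub-blocks $I_{n,m} := (2^m, 2^{m+1}]$ of $J_n$: each sub-block sum is controlled in $L^2$ by the size-independent estimate (\ref{61}), and their assembly by Rademacher--Menshov contributes only the logarithm of the number of sub-blocks, $(\log e^n)^2 = n^2$. Combined with the factor $n^{b/3}$ from Corollary \ref{r1}, this gives
\begin{equation*}
\Big\|\max_{M \in J_n}\Big|\sum_{k \in J_n,\, k\le M} c_k f_k\Big|\Big\|_2^2 \le C(f)\, n^{2+b/3}\sum_{k \in J_n} c_k^2\, \s_{-1+\e_n}(k).
\end{equation*}
The residual maxima inside each dyadic sub-block $I_{n,m}$ are handled by an analogous inner decomposition whose accumulated cost again scales only in $\log\log k$ units, and is absorbed by the remaining factor $n^{2b/3}$ that the hypothesis leaves free.

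Finally, sum over $n$: since $n \asymp \log\log k$ on $J_n$, the hypothesis
$$\sum_k c_k^2\, (\log\log k)^{2+b}\, \s_{-1+(\log\log k)^{-b/3}}(k) < \infty$$
produces $\sum_n \|M_n\|_2^2 < \infty$, where $M_n$ denotes the super-block maximum above. Borel--Cantelli then forces $M_n \to 0$ a.e., and combined with the a.s.\ convergence of the sub-sequence $(S_{J_n^-})_{n\ge 1}$, which follows from the same summability, this yields the a.e.\ convergence of $\sum_k c_k f_k$. The main obstacle throughout is avoiding the $(\log k)^2$-loss imposed by a straightforward Rademacher--Menshov: this forces the two-level decomposition described above and relies essentially on the uniform-in-sub-interval size-independence of (\ref{61}), which is precisely the feature that distinguishes the $BV$ regime and makes $\log\log k$-type multipliers attainable.
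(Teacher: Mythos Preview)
Your proposal has a genuine gap at the step where you dispose of the ``residual maxima inside each dyadic sub-block $I_{n,m}$.'' You assert this can be done by ``an analogous inner decomposition whose accumulated cost again scales only in $\log\log k$ units,'' but no such decomposition exists with the tools you invoke. Inside a block $I_{n,m}=(2^m,2^{m+1}]\subset J_n$ one has $2^m\asymp e^{e^n}$; controlling $\max_{M\in I_{n,m}}\big|\sum_{2^m<k\le M}c_kf_k\big|$ via Lemma~\ref{entro} (or any Rademacher--Menshov-type argument) built on the non-maximal bounds from Corollary~\ref{r1} or (\ref{61}) costs $(\log|I_{n,m}|)^2\asymp m^2\asymp e^{2n}\asymp(\log k)^2$, not a power of $\log\log k$. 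This loss is intrinsic: the Rademacher--Menshov factor is governed by the \emph{number of points} over which the supremum ranges, here $\asymp 2^m$, regardless of how one regroups. Your two-level scheme therefore controls maxima only along the dyadic subsequence; pushing it to all partial sums would require a hypothesis of strength $\sum_k c_k^2(\log k)^2\s_{-1}(k)<\infty$, far stronger than (\ref{coeff1}).

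The paper circumvents this by a \emph{frequency} rather than index-range splitting, $f=R^J+r^J$. For the trigonometric polynomial $R^J$, the Carleson--Hunt maximal inequality supplies a genuine pointwise maximal bound over all partial sums in $[N_j,N_{j+1}]$ at cost $(\log J)^2$, independent of $N_{j+1}-N_j$. For the tail $r^J$, an $(\e,1-\e)$ interpolation combined with Corollary~\ref{r1} gains a factor $(\e J^\e)^{-1}$, so that the full $(\log N_{j+1})^2$ loss from Lemma~\ref{entro} (applied over \emph{all} points of $[N_j,N_{j+1}]$) is absorbed by the choices $\log J=(\log\log N_{j+1})^{1+b}$ and $\e=2(\log\log N_{j+1})^{-b}$, yielding $J^\e=(\log N_{j+1})^2$. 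It is this frequency decomposition together with Carleson--Hunt, not a finer block structure, that produces a true maximal inequality at $\log\log k$ cost; this is the missing ingredient in your approach.
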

 We will derive it, as well as (\ref{coeff2}), directly from   Theorem \ref{p1}, thus    without
using analysis on the polydisc as in \cite{ABS}.    
  \begin{remark}In spite of the regular decay of its Fourier coefficients, it is well-known that a function $f\in {\rm BV}(\T)$ may have very pathological
behavior.
 Jordan \cite{Jo} gave in 1881 a remarkably simple and elegant   construction of a function  with bounded variation, having positive jumps on each
rational, and being continuous almost everywhere.
 \end{remark} \begin{remark}Theorem \ref{t1a} applies to the case $s=1$  in (\ref{funct}) which corresponds to the Fourier expansion of   the function $
  \langle  x\rangle= \frac{\pi}{2}(1-2  x)$, $0\le x\le 1$  
\begin{equation}  \langle 
x\rangle= \sum_{1}^\infty
\frac{\sin  2\pi n x}{n}\qq (0< x< 1) 
\end{equation} 
the series being discontinuous at $x=0$.  It is quite interesting to notice by expanding $\langle  x\rangle $    with respect to the  system
$\cos (n+\frac{1}{2})x$, $\sin (n+\frac{1}{2})x$, $n=0,1, \ldots$ (which is orthogonal and complete over any interval of length $2\pi$),
that one also gets (\cite{Z} p.71)
\begin{equation} \langle  x\rangle  =\frac{4}{\pi}\sum_{0}^\infty
\frac{\cos 2\pi (n+\frac{1}{2})x}{(2n+1)^2}\qq (0\le x\le 1),
\end{equation}
where this time the series is absolutely and uniformly  convergent. Let $\varsigma(x) $ denote the series in the right handside.    Further, it is
not a complicated task to prove that the series $\sum   c_k\varsigma(n_k x)  $ converges for almost every
$x$ under the minimal condition $\sum   c_k^2<\infty$. However $\varsigma(x) $ is
$2$-periodic whereas $\langle  x\rangle$ is
$1$-periodic. The study of the system $ \{\langle 
nx\rangle, n\in \N\}$ goes back to Riemann's work \cite{R}. Davenport \cite{D1, D2} much investigated its properties.  It is known  
that this system
  possesses smoothness properties going at the opposite of those of the trigonometrical system (the series
$ 
\sum_k c_k  \langle  kx\rangle$ is never continuous unless the  coefficients $ c_k
$ all vanish). 
 We refer to  Jaffard 
 \cite{J}.
 However, the a.s. convergence properties of series attached to this system seem to remain relatively close to those of the
trigonometrical system, namely to belong close to the domain of applicability of Carleson's theorem.

\end{remark} 
 
 \subsection{A complement to Wintner's Theorem.}
We finally also prove an important  complementary result to Wintner's 
famous characterization of mean convergence of series $\sum_{k=0}^\infty c_k f_k $. Recall some necessary facts.    Let  
$f\in L^2(\T)$ with
$\langle f, 1\rangle=0$ 
 and denote $ \bar {f} =\{f_n, n\ge 0\}$ where we recall that 
$f_n(x)= f(nx)$. We say that the system
$\bar {f} $ is mean convergent if the series  
$\sum_{k=0}^\infty c_k f_k $  converges in
  $L^2(\T)$ for any $ \{c_k, n\ge 0\}  \in \ell^2$.  
This property is characterized by the following well-known theorem. 
\begin{theorem}[Wintner \cite{Wi}]   \label{win}
\vskip 3pt The following statements are equivalent:
\vskip 3pt \noi 1.   The series   $\sum_{k=1}^\infty c_k f_k( x)$  converges in
  $L^2(\T)$ for any coefficient sequence $(c_k)_k\in \ell^2$.
\vskip 3pt \noi 2. There exists a constant $c>0$ such that for any $n\ge 1$ and
any reals $\{c_k, 1\le k \le n\}$ we have
 \begin{equation}\label{qos} \big\|\sum_{k=1}^n  c_kf_k \big\|^2\le c\sum_{k=1}^nc_k^2  .
\end{equation}
\vskip 3pt \noi 3. The infinite matrix
 $\big(\langle f_k, f_\ell\rangle \big)_{k,\ell}  $
defines a bounded operator on $\ell^2$.
\vskip 3pt \noi 4. The Dirichlet series
 $\sum_{n=1}^\infty a_n n^{-s}$ and 
$\sum_{n=1}^\infty b_n n^{-s}$ are regular and bounded
in the half-plane ${\Re (s)}>0$.
\end{theorem}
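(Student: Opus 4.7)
The plan is to establish the chain $(1)\Leftrightarrow(2)\Leftrightarrow(3)\Leftrightarrow(4)$, where the first two equivalences are soft functional-analytic facts, and only $(3)\Leftrightarrow(4)$ carries analytic content tied to Dirichlet series.

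For $(1)\Leftrightarrow(2)$ I would invoke the uniform boundedness principle applied to the partial-sum operators $T_n\colon \ell^2(\N)\to L^2(\T)$, $T_n(\uc)=\sum_{k=1}^n c_kf_k$. Each $T_n$ is bounded (finite sum), and if (1) holds, the sequence $\{T_n(\uc)\}$ is bounded for every $\uc\in\ell^2$, hence $\sup_n\|T_n\|<\infty$ by Banach--Steinhaus, which is exactly (2). Conversely, (2) applied to a tail $(c_k)_{N<k\le M}$ yields $\|\sum_{N<k\le M}c_kf_k\|^2\le c\sum_{N<k\le M}c_k^2\to 0$, so the partial sums form a Cauchy sequence in $L^2(\T)$ and converge. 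For $(2)\Leftrightarrow(3)$ I would use that the Gram matrix $G=(\langle f_k,f_\ell\rangle)_{k,\ell}$ is positive semi-definite and satisfies the key identity
\begin{equation*}
\langle G\uc,\uc\rangle_{\ell^2}=\Big\|\sum_{k}c_kf_k\Big\|_2^2,
\end{equation*}
so the optimal constant in (2) is precisely the operator norm $\|G\|_{\ell^2\to\ell^2}$.

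The heart of the proof is $(3)\Leftrightarrow(4)$. Writing $f=\sum_m a_m e_m$, I would expand
\begin{equation*}
\Big\|\sum_{k=1}^n c_kf_k\Big\|_2^2=\int_\T\Big|\sum_{N\ge 1}e_N(x)\sum_{\substack{dk=N\\ k\le n}}a_d c_k\Big|^2\dd x=\sum_{N\ge 1}\Big|\sum_{\substack{dk=N\\ k\le n}}a_dc_k\Big|^2,
\end{equation*}
which identifies the quadratic form with $\sum_N|\gamma_N|^2$ where $\gamma_N$ are the Dirichlet coefficients of the product $F(s)C(s)$, $F(s)=\sum_m a_m m^{-s}$ and $C(s)=\sum_k c_k k^{-s}$. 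Via Bohr's identification of Dirichlet series with power series on the infinite polytorus $\T^\infty$ (sending the $j$-th prime to the $j$-th coordinate), this coefficient-square sum is exactly the $L^2(\T^\infty)$-norm squared under the Bohr lift. The boundedness of the Gram operator $G$ on $\ell^2$ is thus equivalent to the multiplier $C\mapsto FC$ being a bounded operator on the Hardy space $\mathcal H^2$ of Dirichlet series, which by the Helson--Hedenmalm--Lindqvist--Seip multiplier theorem is equivalent to $F$ lying in $\mathcal H^\infty$, i.e.\ being represented by a bounded analytic Dirichlet series on the half-plane $\{\Re s>0\}$. Applying this to the real and imaginary parts of the Fourier decomposition (or, in the real formulation, separately to the sine and cosine coefficient Dirichlet series) yields the condition stated in (4).

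The main obstacle is Step 3: the identification of the Gram quadratic form with the $\mathcal H^2$-multiplier norm of $F$ and the reduction to $\mathcal H^\infty$. The coefficient-expansion identity is a direct computation, but the passage from $\sum|\gamma_N|^2$-boundedness to analytic boundedness on a half-plane is non-trivial and rests on the Bohr correspondence together with the $\mathcal H^\infty=\mathcal H^\infty(\T^\infty)$ multiplier characterization; if one prefers a self-contained route one can replace this by a direct test-vector argument, choosing $c_k=k^{-s-\frac{1}{2}-\delta}$ and letting $\delta\downarrow 0$ to recover boundedness of $F(s)$ pointwise on $\{\Re s>0\}$, and conversely using Cauchy--Schwarz on the Dirichlet product to get the matrix bound from an $L^\infty$ bound on $F$.
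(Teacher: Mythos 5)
The paper does not prove this statement: Theorem \ref{win} is quoted as a known result and attributed to Wintner \cite{Wi}, so there is no internal proof to compare against. Judged on its own terms, your argument is essentially correct in its main line. The soft equivalences are fine: $(1)\Leftrightarrow(2)$ by Banach--Steinhaus plus the tail/Cauchy argument, and $(2)\Leftrightarrow(3)$ because the Gram matrix is symmetric positive semi-definite, so boundedness of its quadratic form on finitely supported vectors is the same as boundedness of the operator. Your identification of the quadratic form with $\sum_N|\gamma_N|^2$, $\gamma=a*c$ the Dirichlet convolution, is a correct Parseval computation, and reducing $(3)\Leftrightarrow(4)$ to the statement that the multiplier algebra of $\mathcal H^2$ is $\mathcal H^\infty$ (Hedenmalm--Lindqvist--Seip) is legitimate, if anachronistic: Wintner's 1944 proof works directly with vertical-line means of $|F(\s+it)C(\s+it)|^2$ and Bohr's theory of uniform convergence, whereas you route everything through the polytorus; the HLS theorem is proved independently of Wintner, so there is no circularity.

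Two caveats. First, your proposed ``self-contained'' replacement --- testing against $c_k=k^{-s-1/2-\d}$ --- is essentially the reproducing-kernel argument for $\mathcal H^2$, and the kernels $\zeta(\cdot+\bar w)$ only live at points $w$ with $\Re w>1/2$; this yields boundedness of $F$ on $\{\Re s>1/2\}$ only. The passage from $1/2$ to $0$ \emph{is} the content of the Bohr-lift step and cannot be bypassed by test vectors, so that alternative as sketched does not prove $(3)\Rightarrow(4)$. Likewise ``Cauchy--Schwarz on the Dirichlet product'' does not by itself give the converse; one genuinely needs that the Bohr lift of an $\mathcal H^\infty$ series is an $L^\infty$ multiplier of $H^2(\T^\infty)$. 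Second, condition $(4)$ concerns the two real Dirichlet series attached to the cosine and sine coefficients, while your computation produces the single series $\sum_n \hat f(n)n^{-s}=\tfrac12(A(s)-iB(s))$; the equivalence of ``this is bounded'' with ``$A$ and $B$ are each bounded'' requires the reflection $D(s)\mapsto\overline{D(\bar s)}$ for series with real coefficients, which you should state rather than gesture at. Neither point invalidates the main argument, but both need to be written out for the proof to be complete.
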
 
   Suppose $\bar {f}$ is mean convergent. It is natural to ask whether there always exists a  
class of    coefficients $(c_k)_k$ for which  the series  $\sum_{k=1}^\infty c_k\p_k $ will  converge  almost everywhere. The   theorem below answers this
affirmatively by identifying  a general class of    coefficients.
\vskip 2 pt      
Recall a useful notion. A sequence of coefficients
$ \{c_k, n\ge 0\}$ is called   {\it universal} if for any orthonormal system $\Phi $ of functions defined on
a bounded interval (and possibly extended periodically over the real line), the
 series $\sum_{k=1}^\infty c_k\p_k $ converges a.e.  
  
 \begin{theorem}\label{sw} Assume that 
 $\bar {f}$ is mean convergent. Then the series $\sum_{k=1}^\infty c_k f_k( x)$ converges a.\!\! e. for any {\it universal}
coefficient sequence $(c_k)_k$.
\end{theorem}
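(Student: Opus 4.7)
The plan is to transfer the classical a.e.\ convergence machinery available for orthonormal systems to the system $\{f_k\}$, using the quasi-orthogonality supplied by Wintner's theorem as a substitute for Parseval's identity. The starting point is the equivalence $(1)\Leftrightarrow(2)$ in Theorem \ref{win}: mean convergence of $\bar f$ delivers the block inequality
$$\big\|\sum_{k=m}^n c_k f_k\big\|_2^2 \;\le\; c\,\sum_{k=m}^n c_k^2,$$
uniformly in $m\le n$ and in the coefficients. From this point on, only this quasi-orthogonality and the universality of $(c_k)$ will be used.

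Next I would apply Gram--Schmidt to $\{f_k\}$ in $L^2(\T)$ to produce an orthonormal system $\{\varphi_\ell\}$ and a lower triangular matrix $(a_{k\ell})$ with $f_k=\sum_{\ell\le k}a_{k\ell}\varphi_\ell$; the matrix $(a_{k\ell})$ is the Cholesky factor of the Gram matrix $\big(\langle f_k,f_\ell\rangle\big)$, and by the quasi-orthogonality bound is a bounded operator on $\ell^2$ with norm squared at most $c$. The partial sums then rewrite as
$$\sum_{k=1}^N c_kf_k=\sum_{\ell=1}^N\Big(\sum_{k=\ell}^N c_k a_{k\ell}\Big)\varphi_\ell,$$
so that $\sum c_kf_k$ agrees in $L^2$ with a genuine series in the ONS $\{\varphi_\ell\}$, with coefficients that are a bounded linear image of $(c_k)$.

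The decisive step is then a maximal inequality transfer. The proofs of the Menshov--Rademacher, Tandori and Gaposhkin maximal estimates of the form
$$\big\|\sup_N\big|\sum_{k\le N}c_k\varphi_k\big|\big\|_2\;\le\; C\,\Phi\big((c_k)\big)$$
for an arbitrary ONS use Parseval's identity only through its action on dyadic blocks of indices, and substituting the quasi-orthogonality inequality for Parseval yields the same estimate for the system $\{f_k\}$ up to a multiplicative factor $\sqrt c$. Since $(c_k)$ is universal, a Nikishin-type argument applied to pathological orthonormal systems forces the functional $\Phi$ governing universality to satisfy $\Phi((c_k))<\infty$. Combined with the $L^2$-convergence of $\sum c_kf_k$, already supplied by (2) in Theorem \ref{win}, the classical Banach limit principle then delivers the a.e.\ convergence of $\sum c_k f_k$.

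The hard part will be the identification of the functional $\Phi$ and the verification that the abstract definition of universality (a.e.\ convergence against \emph{every} ONS) really forces $\Phi((c_k))<\infty$. This requires invoking a suitable Nikishin--Menshov construction of a sufficiently rich family of orthonormal systems, after which the whole argument reduces to a verbatim adaptation of the standard a.e.\ convergence proofs for orthogonal series, with the Wintner bound playing the role of Parseval's equality.
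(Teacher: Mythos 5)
Your reduction to a maximal--inequality transfer contains a genuine gap at exactly the point you flag as ``the hard part'', and that gap cannot be filled as described. Your scheme needs a functional $\Phi$ with two properties simultaneously: (a) every universal sequence satisfies $\Phi((c_k))<\infty$, and (b) $\Phi((c_k))<\infty$ yields an $L^2$ maximal inequality valid for every quasi-orthogonal system. No such explicit $\Phi$ is known --- the characterization of universal (convergence-multiplier) sequences for arbitrary ONS is a long-standing open problem. The natural candidate, the Menshov--Rademacher functional $\Phi((c_k))^2=\sum_k c_k^2(\log k)^2$, does satisfy (b) (Menshov--Rademacher indeed only uses quasi-orthogonality), but fails (a): universality forces $\sum_k c_k^2(\log k)^2<\infty$ only for monotone coefficients (Tandori), not in general, and a Nikishin--Stein factorization applied to a single pathological ONS produces only a weak-type maximal bound for that fixed system, not a uniform quantitative functional over all ONS. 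A secondary issue: in your Gram--Schmidt step the coefficients $\sum_{k=\ell}^{N}c_ka_{k\ell}$ depend on $N$, so the partial sums of $\sum_k c_kf_k$ are not partial sums of a fixed orthogonal series, and a.e.\ convergence does not transfer from $\{\varphi_\ell\}$ back to $\{f_k\}$ without further argument.

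The missing idea is much simpler and avoids any quantitative functional: use the definition of universality directly on a suitably constructed ONS. Normalize so that the Wintner constant in (\ref{qos}) is $c=1$; then $I-G_n$ is positive semi-definite, where $G_n$ is the Gram matrix of $f_1,\dots,f_n$. Realize $I-G$ as the Gram matrix of functions $g_k$ supported on a bounded interval $Y$ disjoint from $\T$ (extending the $g_k$ consistently by induction, as in Schur's lemma). Since the supports are disjoint, $\langle f_k+g_k,\,f_\ell+g_\ell\rangle=G_{k\ell}+(I-G)_{k\ell}=\delta_{k\ell}$, so $(f_k+g_k)$ is an orthonormal system on $\T\cup Y$. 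Universality of $(c_k)$ applied to \emph{this} system gives a.e.\ convergence of $\sum_k c_k(f_k+g_k)$, and restricting to $\T$, where $g_k\equiv 0$, gives a.e.\ convergence of $\sum_k c_kf_k$. This is the route the paper takes; your quasi-orthogonality observation is correct but is used there only to guarantee that $I-G$ is positive semi-definite, not to run a maximal-inequality argument.
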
 
  
\vskip 3pt The paper is organized as follows: in Sections \ref{s2}  to \ref{s6} we prove
the results stated before. In Section \ref{s7}, a strenghtened form of Theorem \ref{p1} is proved. 

\section{\bf Proof of Theorem \ref{t1}}\label{s2} Let $\d$ be the arithmetical function defined by
\begin{eqnarray} \label{delta}   \d(n)=\begin{cases}1\quad & {\rm if}\ n=1,\cr
0\quad & {\rm if}\ n \not=1. \end{cases} \end{eqnarray} Let $\m$ denotes the M\"obius  function and recall that 
\begin{eqnarray} \label{sp} \sum_{d|n}\m(d)=\d(n) . \end{eqnarray} 
We have 
$$\big\|\sum_{k\in K} c_k f_k\big\|_2^2 =  \sum_{k, \ell\in
K}c_kc_\ell\sum_{\nu=1}^\infty a_{\frac{\nu k}{(k,\ell)}}a_{\frac{\nu
\ell}{(k,\ell)}}. $$
 We  decompose the right-hand side according to the values taken by $(k,\ell)$,  
 \begin{eqnarray}\label{sd0} 
 \sum_{k, \ell\in K}c_kc_\ell\sum_{\nu=1}^\infty a_{\frac{\nu k}{(k,\ell)}}a_{\frac{\nu \ell}{(k,\ell)}}&=& 
\sum_{d\in F(K)} S_d,
\end{eqnarray} 
where 
\begin{eqnarray}\label{sd}S_d= \sum_{k, \ell\in K\atop (k,\ell)=d}c_kc_\ell\sum_{\nu=1}^\infty a_{\frac{\nu k}{d}}a_{\frac{\nu
\ell}{d}} . 
\end{eqnarray}   
  
   We claim that
\begin{eqnarray}\label{s1} |S_1 |  &\le & \Big(\sum_{  k \in K }|c_k|^2\psi(k)\Big) \Big(\sum_{  k \in K }\frac{A_{  k  }}{ \psi(k)
} \theta(k) \Big) .   
   \end{eqnarray}
  Indeed, by (\ref{delta}), (\ref{sp}),
 \begin{eqnarray*} S_1 &=& \sum_{k, \ell\in K }c_kc_\ell\sum_{\nu=1}^\infty a_{ {\nu k} }a_{ {\nu
\ell} }\d((k,\ell)) =\sum_{k, \ell\in K }c_kc_\ell\sum_{\nu=1}^\infty a_{ {\nu k} }a_{ {\nu
\ell} }\sum_{d|(k,\ell)} \m(d)\cr 
& =& \sum_{\nu=1}^\infty\sum_{d\in F(K)}\m(d)\sum_{   k,\ell\in K \atop d|k, d|\ell}c_kc_\ell  a_{ {\nu k} }a_{ {\nu
\ell} }
= \sum_{\nu=1}^\infty\sum_{d\in F(K)}\m(d)\Big(\sum_{    k \in K \atop d|k}c_k  a_{ {\nu k} } 
 \Big)^2 .   
  \end{eqnarray*}
This thus factorizes, and now we can apply Cauchy-Schwarz's inequality to get
\begin{eqnarray*} |S_1|  & \le &\sum_{\nu=1}^\infty\sum_{d\in S\hskip -1,9ptF(K)} \Big(\sum_{  k \in K \atop d|k } |c_k|\sqrt{\psi(k)}. 
  \frac{a_{ {\nu k}}}{\sqrt{\psi(k)}}   \Big)^2
 \cr  & \le  & \Big(\sum_{  k \in K }|c_k|^2\psi(k)\Big)\sum_{\nu=1}^\infty \sum_{d\in S\hskip -1,9ptF(K)} 
   \sum_{k\in K   \atop d|k     }   \frac{a^2_{ {\nu k}
}}{ \psi(k) } 
\cr & = &   \Big(\sum_{  k \in K }|c_k|^2\psi(k)\Big) \Big(\sum_{  k \in K }\frac{A_{  k  }}{ \psi(k)
} \sum_{   d\in S\hskip -1,9ptF(K) \atop d|k  }   1  \Big)    \cr  & =&   \Big(\sum_{  k \in K }|c_k|^2\psi(k)\Big) \Big(\sum_{  k \in K
}\frac{A_{  k  }}{ \psi(k) }  \theta(k) \Big),    
  \end{eqnarray*}

Now let $K_d= \frac{1}{d}(d\N\cap K) $. For the sum   $S_d$
defined in (\ref{sd}), we have
 \begin{eqnarray*}S_d&= &\sum_{  k, \ell\in K\atop (k,\ell)=d}c_kc_\ell\sum_{\nu=1}^\infty a_{\frac{\nu k}{d}}a_{\frac{\nu
\ell}{d}}  
= \sum_{  k', \ell'\in K_d\atop (k',\ell')=1}c_{k'd}c_{\ell'd}\sum_{\nu=1}^\infty a_{ {\nu k' } }a_{ {\nu
k' } } .
   \end{eqnarray*}  
Thus $S_d $ has just same form than the sum $S_1$ studied before, with $K_d, k',c_{k'd}, a_{k'd}$ in place of $K, k,c_{k}, a_{k}$.  We deduce from   (\ref{s1}) that
 \begin{eqnarray}\label{s1a} S_d   &\le & \Big(\sum_{k'\in K_d}   |c_{k'd}|^2\psi(k' )\Big)
\Big(\sum_{k'\in K_d} 
\frac{A_{  k'  }}{
\psi(k')}\theta(k')
\Big)
\cr &= & \Big(\sum_{k \in K\atop d|k}   |c_{k}|^2\psi(\frac{k}{d})\Big)
   \Big(\sum_{k \in K\atop d|k} 
\frac{A_{  \frac{k}{d}  }}{
\psi(\frac{k}{d})} \theta(\frac{k}{d}) 
\Big)  .
   \end{eqnarray}
   Using (\ref{sd0}), we get 
 \begin{eqnarray}\label{s1b} \big\|\sum_{k\in K} c_k f_k\big\|_2^2&=&\sum_{d\in F(K)}  S_d   \le  \sum_{d\in F(K)}\Big(\sum_{k \in
K\atop d|k}    |c_{k}|^2\psi(\frac{k}{d})\Big)
   \Big(\sum_{k \in K\atop d|k} 
\frac{A_{  \frac{k}{d}  }}{
\psi(\frac{k}{d})} \theta(\frac{k}{d}) 
\Big)  
\cr  &\le &\Big(\sup_{d\in F(K)} \sum_{k \in K\atop d|k} 
\frac{A_{  \frac{k}{d}  }}{
\psi(\frac{k}{d})} \theta(\frac{k}{d})\Big)   \sum_{k \in K }    |c_{k}|^2\sum_{d\in F(K)\atop d|k}
\psi(\frac{k}{d}) 
     \cr &= & B \sum_{k \in K }  |c_{k}|^2 \psi*\zeta_0(k).
    \end{eqnarray}
 

  \section{\bf Proof of Theorem \ref{p1}}  \label{s3}
 Let $\{c_k, k\ge 1\}$ be a sequence of coefficients
$\{c_k, k\ge 1\}$ supported by
$K$,    
 ($c_k=0$ if $k\notin K$). Let $\e>0$. We recall that the generalized Euler totient function
$J_\e$ is the   multiplicative arithmetical function defined by 
$$J_\e(n)= \zeta_\e *\m(n)=\sum_{d|n} d^\e \m(\frac{n}{d}).$$
By M\"obius inversion Theorem,  
\begin{eqnarray} \label{m} n^\e =\sum_{d|n} J_\e (d). \end{eqnarray}
\ Step (1) is as in  \cite{BW1}, except that we introduce 
   an   arithmetic function $\psi
$. It is necessary to display it here. Step (ii) uses basic properties of   Dirichlet convolutions.  
\vskip 3 pt \noi 
(1)  Noticing that
if  
$d|k$  and
$k\in K$,  then
$d\in F(K)$, we have  by (\ref{m}) 
$$ (k,\ell)^\e =\sum_{d\in F(K)}  J_\e (d)  {\bf 1}_{d|k} {\bf 1}_{d|\ell}.  $$
Thus
\begin{eqnarray}  \label{HS1a}
L\ :=\  \sum_{k,\ell=1}^n
 c_k   c_\ell\frac{(k,\ell)^{2s}}{k^s\ell^s}&=& \sum_{k,\ell \in K}  \frac{c_k   c_\ell }{k^s\ell^s}\Big\{\sum_{d\in F(K)} 
J_{2s} (d)  {\bf 1}_{d|k} {\bf 1}_{d|\ell}\Big\}  
. 
 \end{eqnarray}
Writing $k=ud$, $\ell=vd$ and noting that $u,v\in F(K)$, we have 
\begin{eqnarray*} 
L  &\le& \sum_{u,v\in F(K)} \frac{1}{u^sv^s} \Big(\sum_{d\in F(K)} \frac{J_{2s}
(d)}{d^{2s}}c_{ud}c_{vd}   \Big) . 
 \end{eqnarray*}
By the Cauchy-Schwarz inequality,
$$ \sum_{d\in F(K)} \frac{J_{2s}
(d)}{d^{2s}}c_{ud}c_{vd}    \le \Big(\sum_{d\in F(K)} \frac{J_{2s}
(d)}{d^{2s}}c_{ud}^2  \Big)^{1/2}\Big(\sum_{d\in F(K)} \frac{J_{2s}
(d)}{d^{2s}} c_{vd}^2   \Big)^{1/2}.$$
Hence,
\begin{eqnarray*} 
L  &\le& \Big[\sum_{u \in F(K)} \frac{1}{u^s } \Big(\sum_{d\in F(K)} \frac{J_{2s}
(d)}{d^{2s}}c_{ud}^2  \Big)^{1/2}\Big]^2 . 
 \end{eqnarray*}
Let   $\psi $ be a positive arithmetic function. Writing $\frac{1}{u^s}=\frac{1}{u^{s/2} \psi(u)^{1/2}} \frac{\psi(u)^{1/2}}{u^{s/2}}$ and applying  Cauchy-Schwarz's inequality again gives,
\begin{eqnarray*} 
L &\le& \Big(\sum_{u \in F(K)} \frac{1}{u^s\psi(u) } \Big)\Big(\sum_{u \in F(K)}
\frac{\psi(u)}{u^s  } \sum_{d\in F(K)}
\frac{J_{2s} (d)}{d^{2s}}c_{ud}^2  \Big)  . 
 \end{eqnarray*}
Let $F^2(K)=\{ ud: u,d\in F(K)\}$. Then
\begin{eqnarray} \label{depp}
\sum_{u \in F(K)}
\frac{\psi(u)}{u^s  } \sum_{d\in F(K)}
\frac{J_{2s} (d)}{d^{2s}}c_{ud}^2    &\le&  \sum_{\nu \in F^2(K)}  c_\nu^2  \sum_{u \in
F(K)\atop u|\nu }
\frac{\psi(u)}{u^s  } 
\frac{J_{2s} (\frac{\nu}{u   })}{(\frac{\nu}{u   })^{2s}}  
\cr &=&  \sum_{\nu \in K} \frac{ c_\nu^2}{  \nu^{2s}   }   \sum_{u \in
F(K)\atop u|\nu }
  J_{2s}\big( \frac{\nu}{u   }\big) u^{ s} \psi(u)    , 
 \end{eqnarray}
since $c_\nu=0$ if $\nu\notin K$. Hence we get
\begin{eqnarray} \label{dep0}
L &\le& \Big(\sum_{u \in F(K)} \frac{1}{u^s\psi(u) } \Big)\Big(\sum_{\nu \in K} \frac{ c_\nu^2}{  \nu^{2s}   }   \sum_{u \in
F(K)\atop u|\nu }
  J_{2s}\big( \frac{\nu}{u   }\big) u^{ s} \psi(u) \Big)  . 
 \end{eqnarray}

\vskip 3 pt \noi (2)  
Choose $\psi(u) = u^{-s} \psi_1(u)\s_\tau(u)$ (recalling that $\psi_1(u)>0$  is non-decreasing). Then,
\begin{eqnarray*}
L &\le& \Big(\sum_{u \in F(K)} \frac{1}{\psi_1(u)\s_\tau(u) } \Big)\Big(\sum_{\nu \in K} \frac{ c_\nu^2}{  \nu^{2s}   }   \sum_{u \in
F(K)\atop u|\nu }
  J_{2s}\big( \frac{\nu}{u   }\big) \psi_1(u)\s_\tau(u) \Big) 
  \cr &\le& \Big(\sum_{u \in F(K)} \frac{1}{\psi_1(u)\s_\tau(u) } \Big)\Big(\sum_{\nu \in K} \frac{ c_\nu^2 \psi_1(\nu)
}{  \nu^{2s}   }   \sum_{u \in
F(K)\atop u|\nu }   J_{2s}\big( \frac{\nu}{u   }\big)\s_\tau(u) \Big)  . 
 \end{eqnarray*}
As $\nu \in K$,
\begin{eqnarray*} 
      \sum_{u \in
F(K)\atop u|\nu } 
  J_{2s}\big( \frac{\nu}{u   }\big) \s_\tau(u)=J_{2s}* \s_\tau(\nu) . 
 \end{eqnarray*}
    By commutativity and associativity of the Dirichlet convolution,
\begin{eqnarray*} 
     J_{2s}* \s_\tau =     (\zeta_{2s}*\m)* (\zeta_\tau*\zeta_0)=     (\zeta_{2s}*\zeta_\tau)* ( \zeta_0*\m)=    
(\zeta_{2s}*\zeta_\tau)*\d , 
 \end{eqnarray*}
since by (\ref{sp}), $\zeta_0*\m=\d$. 
Further
\begin{eqnarray*} 
      \zeta_{2s}*\zeta_\tau (n)= \sum_{d|n} d^{2s}\big(\frac{n }{ d  }\big)^\tau=n^\tau \sum_{d|n} d^{2s-\tau} =n^\tau \s_{2s-\tau}(n).
 \end{eqnarray*}
 Consequently,
\begin{eqnarray*} 
      J_{2s}* \s_\tau(\nu )&=& \sum_{n|\nu}n^\tau \s_{2s-\tau}(n) \, \d\big(\frac{\nu }{ n  }\big)=\nu^\tau \s_{2s-\tau}(\nu) .
 \end{eqnarray*}

By reporting
\begin{eqnarray} \label{oa}
L  &\le& \Big(\sum_{u \in F(K)} \frac{1}{\psi_1(u)\s_\tau(u) } \Big)\Big(\sum_{\nu \in K}
\frac{ c_\nu^2\psi_1(\nu)}{  \nu^{2s}   }\nu^\tau \s_{2s-\tau}(\nu) \Big) . 
\cr 
&=& \Big(\sum_{u \in F(K)} \frac{1}{\psi_1(u)\s_\tau(u) } \Big)\Big(\sum_{\nu \in K}
  c_\nu^2  \psi_1(\nu)\s_{ \tau-2s}(\nu) \Big), 
 \end{eqnarray}
as claimed. 
Taking $\psi_1(u)\equiv 1$ gives \begin{eqnarray*} 
L
&\le& \Big(\sum_{u \in F(K)} \frac{1}{\s_\tau(u) } \Big)\Big(\sum_{\nu \in K}
  c_\nu^2  \s_{ \tau-2s}(\nu) \Big), 
 \end{eqnarray*}
which is (i). Taking now $\psi_1(u)= \overline{\s}_{-s}(u)$, $\tau =s$ gives
\begin{eqnarray*} 
L  &\le &  \Big(\sum_{u \in F(K)} \frac{1}{\overline{\s}_{-s}(u)\s_s(u) } \Big)\Big(\sum_{\nu \in K}
  c_\nu^2 \overline{\s}_{-s}(u)\s_{-s}(u) \Big), 
 \end{eqnarray*}
 namely (ii). Finally  let $s=1=\tau$ and $\psi_1(u)=\log\log u$. Then, by (\ref{oa}) again,\begin{eqnarray*} 
L
&\le& \Big(\sum_{u \in F(K)} \frac{1}{\s(u)\log\log u  } \Big)\Big(\sum_{\nu \in K}
  c_\nu^2 \s_{-1}(\nu)\log\log \nu \big)  \Big)
  \cr&\le& \frac{\pi^2}{6 } \Big(\sum_{u \in F(K)} \frac{\p(u)}{{u^2}\log\log u } \Big)\Big(\sum_{\nu \in K}
  c_\nu^2 \s_{-1}(\nu) \log\log \nu \Big),
 \end{eqnarray*}
since (\cite{CMS}, p.10) ${\s(n)\p(n)} >{6{n^2}}/{\pi^2}$.  This is  (iii) and the proof is now complete.  \vskip 3pt 
 
\begin{remark}
 Quite similarly, one can also prove that \begin{eqnarray*} 
L  &\le&  \Big(\sum_{u \in F(K)} \frac{1}{\sum_{d|\nu } d\,  \log\log d} \Big)\Big(\sum_{\nu \in K}
  c_\nu^2 \sum_{d|\nu } \frac{  \log\log d}{  d   } \Big) . 
 \end{eqnarray*}
 \end{remark} 
\section{Proof of Theorems \ref{OFC1}, \ref{OFC2}.}
 We begin with a lemma and a corollary which are slightly improving on
Proposition 3.3 in Hilberdink
 \cite{H}. We displayed with care the necessary calculations.
\begin{lemma}\label{zeta1}Let   $\s>1/2$. There exists a constant $C_{\s}$ depending on $\s$ only such that for any   positive integers $k,\ell$ and any real
$T$, 
\begin{eqnarray*} 
\int_1^T |\zeta(\s + it)|^2 \Big(\frac{k}{\ell}\Big)^{it} \dd t=\begin{cases}  T\zeta(2\s) \frac{(k,\ell)^{2\s}}{(k\ell)^\s} + H_1& \quad {\it if}\ T\ge \frac{k\vee \ell}{(k,\ell)}\cr &\cr 
H_2  & \quad {\it if}\ T< \frac{k\vee \ell}{(k,\ell)},\end{cases}
\end{eqnarray*}
 where
\begin{eqnarray*}|H_1| & \le&  C_{\s} \Big\{T\,\frac{(k,\ell)^{2\s}}{(k\ell)^\s}\Big( \frac{T}{\frac{k\vee \ell }{(k,\ell)} }\Big)^{1-2\s} + 
T^{2(1-\s)}\big\{1 + k^{1-\s}\log (kT)+ \ell ^{1-\s}\log (\ell T)\big\} \Big\}
\cr |H_2| & \le&  C_\s T^{2(1-\s)}\big\{1 + k^{1-\s}\log (kT)+ \ell ^{1-\s}\log (\ell T)\big\} .\end{eqnarray*}
\end{lemma}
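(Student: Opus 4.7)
My plan is to use a truncated Euler--Maclaurin representation of $\zeta(\sigma+it)$, expand $|\zeta(\sigma+it)|^2 (k/\ell)^{it}$ as a product, and separate a diagonal contribution (yielding the main term) from off-diagonal and remainder contributions (yielding $H_1$ and $H_2$).

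First I would fix a truncation $X\ge 1$ (ultimately $X=T$) and write
$$\zeta(\sigma+it)=\sum_{m\le X}m^{-\sigma-it}-\frac{X^{1-\sigma-it}}{\sigma+it-1}+\rho(\sigma+it,X),$$
with $|\rho(\sigma+it,X)|\ll_\sigma (1+|t|)X^{-\sigma}$, and the conjugate expression for $\zeta(\sigma-it)$. Multiplying and integrating against $(k/\ell)^{it}$ on $[1,T]$ splits the output into a principal double sum
$$S_X:=\sum_{m,n\le X}\frac{1}{(mn)^\sigma}\int_1^T\left(\frac{nk}{m\ell}\right)^{it}dt,$$
cross terms involving the Euler--Maclaurin correction $X^{1-\sigma\mp it}/(\sigma\mp it-1)$, and remainder integrals involving $\rho$. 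The cross and remainder terms are controlled routinely by $|\int_1^T \tau^{it}dt|\le \min(T,2/|\log\tau|)$, the bound on $\rho$, and $|\sigma\pm it-1|^{-1}\ll (1+|t|)^{-1}$; in particular the remainder alone contributes $O_\sigma(T^{2(1-\sigma)})$ once $X=T$.

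Next I would analyze $S_X$ by splitting according to whether $nk=m\ell$. Writing $d=(k,\ell)$, $k=d\alpha$, $\ell=d\beta$ with $(\alpha,\beta)=1$, the diagonal is parametrized by $(m,n)=(j\alpha,j\beta)$ with $j\ge 1$, and the constraint $m,n\le X$ reduces to $j\le Xd/(k\vee\ell)$. The inner integral equals $T-1$ and $(mn)^{-\sigma}=(k,\ell)^{2\sigma}/[(k\ell)^\sigma j^{2\sigma}]$, so the diagonal contributes
$$\frac{(T-1)(k,\ell)^{2\sigma}}{(k\ell)^\sigma}\sum_{j\le Xd/(k\vee\ell)}j^{-2\sigma}.$$
For $X\ge (k\vee\ell)/d$ the $j$-sum differs from $\zeta(2\sigma)$ by $O_\sigma((Xd/(k\vee\ell))^{1-2\sigma})$, which after multiplication by $T$ produces exactly the first term of $H_1$; for $X<(k\vee\ell)/d$ the diagonal is empty, placing us in the $H_2$ regime.

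For the off-diagonal part I would use that $nk,m\ell$ are distinct positive integers, so $|\log(nk/m\ell)|\gg |nk-m\ell|/(m\ell\vee nk)\ge 1/(m\ell\vee nk)$; I then fix the residue $r=nk-m\ell\ne 0$ and sum over the arithmetic progression of $(m,n)\le X$ solving $nk-m\ell=r$. Together with the harmonic estimate $\sum_{0<|r|\le kX}1/|r|\ll \log(kX)$ applied symmetrically, this yields a bound of the form $O_\sigma(T^{2(1-\sigma)}(k^{1-\sigma}\log(kT)+\ell^{1-\sigma}\log(\ell T)))$ upon taking $X=T$. Collecting all contributions produces $H_1$ in the first regime and $H_2$ in the second. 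The main technical obstacle is precisely this off-diagonal bound: the near-diagonal pairs where $|nk-m\ell|$ is small are what generate the logarithmic factors $\log(kT), \log(\ell T)$, and the hypothesis $\sigma>1/2$ enters exactly here to ensure absolute convergence of the resulting tail sums in $m$ and $n$.
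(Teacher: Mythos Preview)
Your approach is essentially the same as the paper's: approximate $\zeta(\sigma+it)$ by a Dirichlet polynomial, expand the square, separate the diagonal $nk=m\ell$ (parametrized exactly as you do, giving the main term and the first error in $H_1$), and bound the off-diagonal via $\big|\int \tau^{it}\,dt\big|\ll 1/|\log\tau|$ followed by an estimate of $\sum (nm)^{-\sigma}/|\log(km/\ell n)|$ in the style of Titchmarsh, Lemma~7.2.

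The one genuine difference is the truncation point. The paper truncates at the \emph{variable} level $x=|t|$, so that both the boundary term $x^{1-s}/(1-s)$ and the remainder are $O(|t|^{-\sigma})$; this lets one replace $|\zeta(\sigma+it)|^2$ by $\big|\sum_{n\le t} n^{-\sigma-it}\big|^2$ with a total error $O_\sigma(T^{2(1-\sigma)})$ in a single line, and the double sum becomes $\sum_{n,m\le T}\int_{n\vee m}^T(\cdots)\,dt$. Your fixed truncation $X=T$ is also workable but forces you to deal separately with the cross terms involving $X^{1-\sigma\mp it}/(\sigma\mp it-1)$, which you gloss over; the paper's choice simply sidesteps this. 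For the off-diagonal, the paper's concrete device is the dyadic split $\ell n<\tfrac12 km$ versus $\tfrac12 km\le \ell n<km$, writing $\ell n=km-r$ in the second range and summing over $m\le T$ and $1\le r<km/2$; your ``fix $r$ and sum over the arithmetic progression'' amounts to the same computation.

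One point to correct: your stated bound $|\rho(\sigma+it,X)|\ll_\sigma (1+|t|)X^{-\sigma}$ is too weak---plugged in at $X=T$ it would produce an error of order $T^{3-2\sigma}$, not $T^{2(1-\sigma)}$. The bound you actually need (and the one the paper uses, quoting Tenenbaum) is $|\rho|\ll_\sigma X^{-\sigma}$ uniformly for $|t|\le cX$, which is the standard approximate functional equation estimate.
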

\begin{proof}Recall   the   basic approximation result of   the Riemann
Zeta-function
(\cite{Te}, Theorem 3.5). 
Let   $\s_0>0$, $0<\d<1$. Then, uniformly for $\s\ge \s_0$, $x\ge 1$ , $0<|t| \le (1-\d)2\pi x$,
\begin{equation}\label{approx} \zeta(s) =\sum_{n\le x} {1\over n^s} -{x^{1-s}\over 1-s} +{\mathcal O}(
x^{-\s}).
\end{equation}

By taking $\d$ such that $2\pi(1-\d) =1$, $|t|=x$, and observing that $\big|\frac{t^{1-\s -it}}{1-\s- it}\big| \le |t|^{-\s}$, we deduce  
$$\sup_{|t|\ge 1, \s\ge \s_0 }|t|^{\s}\Big|\zeta(\s + it)-\sum_{k=1}^{|t|}\frac{1}{k^{\s + it} }\Big| <\infty.$$
And further, 
$$\sup_{|t|\ge 1, \s\ge \s_0 }|t|^{2\s-1}\Big||\zeta(\s + it)|^2-\big|\sum_{k=1}^{|t|}\frac{1}{k^{\s + it} }\big|^2\Big| <\infty.$$
 
\vskip 2 pt
We choose throughout $\s=\s_0>1/2$. We thus have
\begin{eqnarray}\label{dep}
\int_1^T |\zeta(\s + it)|^2 \Big(\frac{k}{\ell}\Big)^{it} \dd t&=&
\int_1^T \big|\sum_{k=1}^{|t|}\frac{1}{k^{\s + it} }\big|^2 \Big(\frac{k}{\ell}\Big)^{it} \dd t + H,
\end{eqnarray}
where $|H|\le C_{\s } T^{2(1-\s)}$. And
\begin{eqnarray*}
\int_1^T \big|\sum_{1\le n\le t}\frac{1}{n^{\s + it} }\big|^2 \Big(\frac{k}{\ell}\Big)^{it} \dd t &=&\sum_{1\le n,m\le T}\int_{n\vee m}^T \frac{1}{(nm)^\s}\Big(\frac{km}{\ell n}\Big)^{it} \dd t.
\end{eqnarray*}
$\bullet$  First assume that $(k,\ell) =1$. The solutions of the equation $km=\ell n$ are  $m= r\ell $, $n= r k$,  and the condition $n\vee m
\le T$ requires that $1\le r\le T/ (k\vee \ell)$. Hence,
\vskip 2 pt --- If $k,\ell $ are such that $(k\vee \ell)>T$, there is no solution  and thus, $$ \int_1^T \big|\sum_{1\le n\le t}\frac{1}{n^{\s + it}
}\big|^2
\Big(\frac{k}{\ell}\Big)^{it} \dd t =\sum_{1\le n,m\le T\atop km\not=\ell n}\int_{n\vee m}^T \frac{1}{(nm)^\s}\Big(\frac{km}{\ell n}\Big)^{it} \dd t.$$
\vskip 2 pt --- If $k,\ell $ are such that $(k\vee \ell)\le T$, then 
\begin{eqnarray*}
\sum_{1\le n,m\le T\atop km=\ell n}\int_{n\vee m}^T \frac{1}{(nm)^\s}\Big(\frac{km}{\ell n}\Big)^{it} \dd t&=& \frac{1}{(k\ell)^\s}\sum_{1\le r\le \frac{T}{k\vee \ell}}\frac{T-r(k\vee \ell)}{r^{2\s}}\ =\ \frac{T\zeta(2\s)}{(k\ell)^\s} + H,
 \end{eqnarray*}
where 
$$|H|\le C_\s \frac{1}{(k\ell)^\s}\frac{T^{2(1-\s)}}{(k\vee \ell)^{1-2\s}} .$$
$\bullet$ 
Now if $d:=(k,\ell) >1$, writing $k= k' d$, $\ell = \ell'd$, where $(k', \ell')= 1$, our initial integral reduces to 
 $$\int_{n\vee m}^T \frac{1}{(nm)^\s}\Big(\frac{k'm}{\ell' n}\Big)^{it} \dd t.$$
And by what preceeds, 
\vskip 2 pt (i) If $k,\ell $ are such that $\frac{k\vee \ell}{(k,\ell)}>T$, then 
$$ \int_1^T \big|\sum_{1\le n\le t}\frac{1}{n^{\s
+ it} }\big|^2 \Big(\frac{k}{\ell}\Big)^{it} \dd t =\sum_{1\le n,m\le T\atop km\not=\ell n}\int_{n\vee m}^T \frac{1}{(nm)^\s}\Big(\frac{km}{\ell
n}\Big)^{it}
\dd t.$$
\vskip 2 pt (ii) If $k,\ell $ are such that $\frac{k\vee \ell}{(k,\ell)}\le T$, then
\begin{eqnarray*}
\sum_{1\le n,m\le T\atop km=\ell n}\int_{n\vee m}^T \frac{1}{(nm)^\s}\Big(\frac{km}{\ell n}\Big)^{it} \dd t&=&\sum_{1\le n,m\le T\atop k'm=\ell' n}\int_{n\vee m}^T \frac{1}{(nm)^\s}\Big(\frac{k'm}{\ell' n}\Big)^{it} \dd t
\cr &=&   \zeta(2\s) T\,\frac{(k,\ell)^{2\s}}{(k\ell)^\s} + H,
\end{eqnarray*}
where 
$$|H|\le C_\s \frac{1}{(k'\ell')^\s}\frac{T^{2(1-\s)}}{(k'\vee\ell')^{1-2\s}}= C_\s T\,\frac{(k,\ell)^{2\s}}{(k\ell)^\s}
\Big( \frac{T}{\frac{k\vee \ell }{(k,\ell)} }\Big)^{1-2\s}
 .$$
\vskip 5pt 
Now consider the contribution provided by the indices $n,m$ such that $km\not= \ell n$, \begin{eqnarray*}
\Big|\sum_{1\le n,m\le T\atop km\not=\ell n}\int_{n\vee m}^T \frac{1}{(nm)^\s}\Big(\frac{km}{\ell n}\Big)^{it} \dd t\Big|&=&\Big| \sum_{1\le n,m\le T\atop km\not=\ell n}
\frac{1}{(nm)^\s}\, \frac{e^{iT \log \frac{km}{\ell n}}-e^{i(n\vee m) \log \frac{km}{\ell n}}}{i \log \frac{km}{\ell n}}\Big|
\cr &\le &\sum_{1\le n,m\le T\atop km\not=\ell n}
\frac{1}{(nm)^\s} \frac{1}{ |\log \frac{km}{\ell n}|}.\end{eqnarray*}
We operate as in the proof of Lemma 7.2 in \cite{Ti}. We have
\begin{eqnarray*}
\sum_{1\le n,m\le T\atop \ell n<km}
\frac{1}{(nm)^\s} \frac{1}{ |\log \frac{km}{\ell n}|}&=&\Big(\sum_{1\le n,m\le T\atop \ell n<\frac{1}{2}km}+\sum_{1\le n,m\le T\atop \frac{1}{2}km\le \ell n<km}\Big)
\frac{1}{(nm)^\s} \frac{1}{ |\log \frac{km}{\ell n}|} \cr &:= & S_1 + S_2.
\end{eqnarray*}
We have 
$$ S_1 \le \frac{1}{\log 2} \Big(\sum_{1\le n\le T}
\frac{1}{n^\s} \Big)^2\le C_\s T^{2(1-\s)}.$$
Now if $\frac{1}{2}km\le \ell n<km$, we can write $\ell n=km-r$ where $1\le r< \frac{1}{2}km$ and we note that $\log \frac{km}{\ell n}=-\log
\big(1-\frac{r}{km})>\frac{r}{km}$. Then
\begin{eqnarray*} S_2&\le& \sum_{m\le T}\sum_{1\le r< \frac{1}{2}km}\frac{1}{(km-r)^{\s}m^{\s}(\frac{r}{km})}\ \le \ 2^\s k^{1-\s}\sum_{m\le
T}m^{1-2\s}\sum_{1\le r< \frac{1}{2}km}\frac{1}{r}\cr &\le &C_\s  k^{1-\s}\sum_{m\le T}m^{1-2\s}\log (km) \ =\ k^{1-\s}(\log k) \sum_{m\le T}m^{1-2\s} +
k^{1-\s} \sum_{m\le T}m^{1-2\s}\log m  
\cr &\le & C_\s k^{1-\s}T^{2(1-\s)}\log (kT) .  
\end{eqnarray*}

Putting both estimates together and operating similarly with the sum corresponding to indices $n,m$ such that $ \ell n>km$ gives
 \begin{eqnarray}\label{rectangle}
\sum_{1\le n,m\le T\atop km\not=\ell n}
\frac{1}{(nm)^\s} \frac{1}{ |\log \frac{km}{\ell n}|}&\le & C_\s T^{2(1-\s)}\big\{1 + k^{1-\s}\log (kT)+ \ell^{1-\s}\log (\ell T)\big\}
 .\end{eqnarray}

 Therefore if $T\ge \frac{k\vee \ell}{(k,\ell})$,
\begin{eqnarray*}
\int_1^T \big|\sum_{1\le n\le t}\frac{1}{n^{\s + it} }\big|^2 \Big(\frac{k}{\ell}\Big)^{it} \dd t &=& T\zeta(2\s) \frac{(k,\ell)^{2\s}}{(k\ell)^\s} + H
\end{eqnarray*}
 with 
$$|H|\le C_\s \Big\{ T\,\frac{(k,\ell)^{2\s}}{(k\ell)^\s}\Big( \frac{T}{\frac{k\vee \ell }{(k,\ell)} }\Big)^{1-2\s}+ T^{2(1-\s)}\big\{1 + k^{1-\s}\log (kT)+ \ell^{1-\s}\log (\ell T)\big\}  \Big\}.$$
And by (\ref{dep}),
\begin{eqnarray*}
\int_1^T |\zeta(\s + it)|^2 \Big(\frac{k}{\ell}\Big)^{it} \dd t&=& T\zeta(2\s) \frac{(k,\ell)^{2\s}}{(k\ell)^\s} + H
\end{eqnarray*}
 with
\begin{eqnarray*}|H|& \le&  C_\s \Big\{T\,\frac{(k,\ell)^{2\s}}{(k\ell)^\s}\Big( \frac{T}{\frac{k\vee \ell }{(k,\ell)} }\Big)^{1-2\s} + T^{2(1-\s)}\big\{1 + k^{1-\s}\log (kT)+ \ell^{1-\s}\log (\ell T)\big\}  \Big\} 
.\end{eqnarray*}
Finally if  $T< \frac{k\vee \ell}{(k,\ell})$, by (\ref{rectangle})
\begin{eqnarray*} \Big|\int_1^T \big|\sum_{1\le n\le t}\frac{1}{n^{\s
+ it} }\big|^2 \Big(\frac{k}{\ell}\Big)^{it} \dd t\Big|& =&\Big|\sum_{1\le n,m\le T\atop km\not=\ell n}\int_{n\vee m}^T \frac{1}{(nm)^\s}\Big(\frac{km}{\ell
n}\Big)^{it}
\dd t\Big|
\cr &\le  &2\sum_{1\le n,m\le T\atop km\not=\ell n}
\frac{1}{(nm)^\s} \frac{1}{ |\log \frac{km}{\ell n}|}
\cr &\le & C_\s T^{2(1-\s)}\big\{1 + k^{1-\s}\log (kT)+ \ell^{1-\s}\log (\ell T)\big\} .
\end{eqnarray*}
 \end{proof}
 \begin{corollary} \label{zeta2}Let   $\s>1/2$. There exists a constant $C_{\s}$ depending on $\s$ only such that for any complex numbers 
$\{c_k,k\in K\}$   and any     real $T\ge 2$, 
we have  
\begin{eqnarray*}
\frac{1}{T}\int_1^T |\zeta(\s + it)|^2 \Big| \sum_{k\in K} c_k k^{it}\Big|^2\dd t&=& \zeta(2\s)
 \sum_{  k, \ell\in K\atop \frac{k\vee \ell}{(k,\ell)}\le T} c_k \overline{c}_\ell \frac{(k,\ell)^{2\s}}{(k\ell)^\s} + H
\end{eqnarray*}
 with
\begin{eqnarray*} |H|
   &\le &C_\s \Big\{  \sum_{k, \ell \in K\atop \frac{k\vee \ell}{(k,\ell)}\le T} |c_k|
|c_\ell|\frac{(k,\ell)^{2\s}}{(k\ell)^\s}\Big( 
\frac{T}{\frac{k\vee \ell }{(k,\ell)} }\Big)^{1-2\s}
 \cr & & \quad +  T^{1-2\s}\Big(\sum_{  \ell \in K } |c_\ell| \Big) \Big(\sum_{  k \in K }|c_k|     k ^{1-\s}\log ( kT)  \Big\}.
\end{eqnarray*}
 \end{corollary}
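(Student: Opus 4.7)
The plan is to expand the squared modulus, swap the order of summation and integration (valid since $K$ is finite), and apply Lemma \ref{zeta1} term by term. Writing
\begin{equation*}
\Big|\sum_{k\in K}c_k k^{it}\Big|^2 = \sum_{k,\ell\in K} c_k\overline{c}_\ell \Big(\frac{k}{\ell}\Big)^{it},
\end{equation*}
we obtain
\begin{equation*}
\int_1^T |\zeta(\sigma+it)|^2\Big|\sum_{k\in K}c_k k^{it}\Big|^2 \dd t = \sum_{k,\ell\in K} c_k\overline{c}_\ell \int_1^T |\zeta(\sigma+it)|^2 \Big(\frac{k}{\ell}\Big)^{it}\dd t,
\end{equation*}
and Lemma \ref{zeta1} is precisely tailored to evaluate each inner integral.

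Split the sum over $(k,\ell)\in K\times K$ into the two regimes dictated by Lemma \ref{zeta1}. For pairs with $\frac{k\vee \ell}{(k,\ell)}\le T$ the lemma contributes a main term $T\zeta(2\sigma)\frac{(k,\ell)^{2\sigma}}{(k\ell)^{\sigma}}$ together with the error $H_1$; for pairs with $\frac{k\vee \ell}{(k,\ell)}> T$ there is no main contribution, only the error $H_2$. Collecting the main terms and dividing by $T$ yields the announced leading expression $\zeta(2\sigma)\sum_{(k,\ell):\frac{k\vee \ell}{(k,\ell)}\le T} c_k\overline{c}_\ell \frac{(k,\ell)^{2\sigma}}{(k\ell)^{\sigma}}$.

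For the error, I use the triangle inequality and the two bounds on $H_1,H_2$ from Lemma \ref{zeta1}. The first piece of $H_1$, after division by $T$, produces
\begin{equation*}
\sum_{k,\ell\in K,\,\frac{k\vee\ell}{(k,\ell)}\le T} |c_k||c_\ell|\,\frac{(k,\ell)^{2\sigma}}{(k\ell)^{\sigma}}\Big(\frac{T}{\frac{k\vee\ell}{(k,\ell)}}\Big)^{1-2\sigma},
\end{equation*}
which is exactly the first contribution to $|H|$ stated in the corollary. The remaining pieces of $H_1$ and $H_2$ share the form $T^{2(1-\sigma)}\{1+k^{1-\sigma}\log(kT)+\ell^{1-\sigma}\log(\ell T)\}$, valid in both regimes. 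Dividing by $T$ turns $T^{2(1-\sigma)}$ into $T^{1-2\sigma}$; using symmetry in $k,\ell$ and factoring the double sum as a product of marginal sums gives
\begin{equation*}
T^{1-2\sigma}\Big(\sum_{\ell\in K}|c_\ell|\Big)\Big(\sum_{k\in K}|c_k|\,k^{1-\sigma}\log(kT)\Big),
\end{equation*}
after absorbing the constant $1$ into $k^{1-\sigma}\log(kT)$ (which is at least $\log T$ when $\sigma\le 1$, and otherwise can be absorbed at $k=1$). Adjusting the constant $C_\sigma$ to cover both contributions completes the proof.

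The only delicate point is bookkeeping: one must verify that the two error regimes of Lemma \ref{zeta1} share a common polynomial/logarithmic majorant so that the final bound can be written in the neat factored form above, rather than as two separate sums. Everything else is a routine expansion, an application of Fubini, and the triangle inequality.
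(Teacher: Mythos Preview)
Your proof is correct and follows exactly the paper's approach: expand the square, apply Lemma \ref{zeta1} to each term, and collect the errors. The paper's own proof is in fact terser than yours (it simply writes ``It follows from Lemma \ref{zeta1} that \ldots'' and states the bound), so your explicit bookkeeping of the two regimes and the absorption of the constant $1$ into $k^{1-\s}\log(kT)$ is if anything more careful than the original.
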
 

\begin{proof}It follows  from Lemma \ref{zeta1} that  
\begin{eqnarray*}
\frac{1}{T}\int_1^T |\zeta(\s + it)|^2 \Big| \sum_{k=1}^N c_k k^{it}\Big|^2\dd t&=&  \zeta(2\s)
 \sum_{ k, \ell \in K\atop \frac{k\vee \ell}{(k,\ell)}\le T} c_k \overline{c}_\ell \frac{(k,\ell)^{2\s}}{(k\ell)^\s} + H
\end{eqnarray*}
 with
\begin{eqnarray*} |H|
  &\le &C_\s \Big\{  \sum_{k, \ell \in K\atop \frac{k\vee \ell}{(k,\ell)}\le T} |c_k|
|c_\ell|\frac{(k,\ell)^{2\s}}{(k\ell)^\s}\Big( 
\frac{T}{\frac{k\vee \ell }{(k,\ell)} }\Big)^{1-2\s}
 \cr & & \quad +  T^{1-2\s}\Big(\sum_{  \ell \in K } |c_\ell| \Big) \Big(\sum_{  k \in K }|c_k|     k ^{1-\s}\log ( kT)  \Big\}.
\end{eqnarray*}

This  allows to conclude easily.

\end{proof}
 \rm  The following corollary is now straightforward.
\begin{corollary}\label{zeta3}Let   $\s>1/2$. There exists a constant $C_{\s}$ depending on $\s$ only such that for any complex numbers $\{c_k, k\in K\}$  
  and any real  $T $  such that $T\ge K^+=\max\{ k, k\in K\}$,    we have  
\begin{eqnarray*}
   (1+ C\frac{K^+ }{T} )\Big(\sum_{k\in K}
|c_k|^2  \Big)  \ \max_{1\le t\le T}  |\zeta(\s + it)|^2    &  \ge & \Big( \zeta(2\s)-\frac{C_\s}{\varpi (K)^{ 2\s-1}}  \Big)
 \sum_{k,\ell =1}^Nc_k \overline{c}_\ell \frac{(k,\ell)^{2\s}}{(k\ell)^\s}  
 \cr  &    &   -  T^{1-2\s}  \Big(\sum_{  k \in K }|c_k|     k ^{1-\s}\log ( kT)  \Big)\sum_{  \ell \in K } |c_\ell|,
\end{eqnarray*} where we have noted
$$\varpi(K)=\max_{ k,\ell \in K}\ \frac{k\vee \ell }{(k,\ell)}.$$
\end{corollary}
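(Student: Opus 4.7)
\smallskip

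\noindent\textbf{Proof proposal.} The plan is to derive the inequality directly from the identity in Corollary \ref{zeta2}, after one observation and two standard tools. The observation is that the hypothesis $T\ge K^+$ forces $T\ge K^+\ge \varpi(K)\ge \frac{k\vee\ell}{(k,\ell)}$ for every pair $k,\ell\in K$, so the restricted sum in Corollary \ref{zeta2} becomes the full GCD sum
$$S\ :=\ \sum_{k,\ell\in K} c_k\overline{c}_\ell\,\frac{(k,\ell)^{2\s}}{(k\ell)^\s}\ \ge\ 0.$$

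First I would bound the left-hand side of that identity by factoring out the pointwise supremum of $|\zeta(\s+it)|^2$ on $[1,T]$:
$$\frac{1}{T}\int_1^T |\zeta(\s+it)|^2\,\Big|\sum_{k\in K} c_k k^{it}\Big|^2 dt\ \le\ \Big(\max_{1\le t\le T}|\zeta(\s+it)|^2\Big)\cdot \frac{1}{T}\int_1^T \Big|\sum_{k\in K} c_k k^{it}\Big|^2 dt.$$
The inner mean square is handled by the Montgomery--Vaughan hybrid mean-value theorem,
$$\int_1^T \Big|\sum_{k\in K} c_k k^{it}\Big|^2 dt\ =\ \sum_{k\in K}|c_k|^2\bigl(T+\theta_k k\bigr),\qquad |\theta_k|\le C,$$
so that after dividing by $T$ and using $k\le K^+$, the left-hand side is dominated by $(1+CK^+/T)\bigl(\sum |c_k|^2\bigr)\max_{1\le t\le T}|\zeta(\s+it)|^2$.

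Second, by the identity from Corollary \ref{zeta2} the same left-hand side equals $\zeta(2\s)S+H$, so transposing $|H|$ gives
$$(1+CK^+/T)\Big(\sum|c_k|^2\Big)\max_{1\le t\le T}|\zeta(\s+it)|^2\ \ge\ \zeta(2\s)S-|H|.$$
I would now separate the two pieces of the error in Corollary \ref{zeta2}: the second, the off-diagonal log-polynomial term, is kept unchanged and becomes the final subtracted quantity $T^{1-2\s}\bigl(\sum|c_k|k^{1-\s}\log(kT)\bigr)\bigl(\sum|c_\ell|\bigr)$; the first, call it $|H_1|$, has to be absorbed into $\zeta(2\s)S$.

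The hard step will be to show $|H_1|\le \frac{C_\s}{\varpi(K)^{2\s-1}}\,S$. Starting from
$$|H_1|\ \le\ C_\s \sum_{k,\ell\in K}|c_k||c_\ell|\,\frac{(k,\ell)^{2\s}}{(k\ell)^\s}\Bigl(\tfrac{T}{\frac{k\vee\ell}{(k,\ell)}}\Bigr)^{1-2\s},$$
the strategy is to exploit $1-2\s<0$ together with $1\le \frac{k\vee\ell}{(k,\ell)}\le \varpi(K)\le T$ to pull out the extremal factor corresponding to the pair achieving $\varpi(K)$; the identity $(k,\ell)(k\vee\ell)=k\ell$ rewrites the remaining kernel as $\bigl(\tfrac{(k,\ell)}{k\vee\ell}\bigr)^{1-\s}$, which is suited to comparison with the quadratic form $S$. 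The delicate point is to pass from the positive sum $\sum|c_k||c_\ell|(\cdot)$ produced by $|H_1|$ to the possibly indefinite quadratic form $S$ that appears on the right-hand side; this is done by Cauchy--Schwarz combined with the Gershgorin-type bounds in Corollary \ref{cor01}, together with the fact that when $T\ge \varpi(K)$ the factor $(\varpi(K)/T)^{2\s-1}$ can be replaced by the simpler $1/\varpi(K)^{2\s-1}$ absorbed in $C_\s$. Once $|H_1|\le (C_\s/\varpi(K)^{2\s-1})S$ is established, a final rearrangement yields the stated inequality.
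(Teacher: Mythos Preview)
Your overall plan matches the paper's proof exactly: factor out $\max|\zeta|^2$, control the Dirichlet-polynomial mean square via the Montgomery--Vaughan form of Hilbert's inequality (the paper uses $\delta=\min\{\log(\ell/k)\}\ge 1/CK^+$ in the same way), and then feed in the identity from Corollary~\ref{zeta2}. Up to that point the paper is no more detailed than you are.

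The gap is in your ``hard step''. The detour through Cauchy--Schwarz and the Ger\v sgorin bounds of Corollary~\ref{cor01} is not what is needed, and one of your claims is simply false: you assert that when $T\ge\varpi(K)$ ``the factor $(\varpi(K)/T)^{2\s-1}$ can be replaced by the simpler $1/\varpi(K)^{2\s-1}$ absorbed in $C_\s$''. That would require $T\ge\varpi(K)^2$, which does not follow from $T\ge K^+$; in the extreme case $T=K^+=\varpi(K)$ the first quantity equals $1$ while the second can be arbitrarily small.

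What actually works is immediate and requires no spectral input. Since $\tfrac{k\vee\ell}{(k,\ell)}\le\varpi(K)$ for every pair and $1-2\s<0$, one has termwise
\[
\Big(\tfrac{T}{(k\vee\ell)/(k,\ell)}\Big)^{1-2\s}=\Big(\tfrac{(k\vee\ell)/(k,\ell)}{T}\Big)^{2\s-1}\le\Big(\tfrac{\varpi(K)}{T}\Big)^{2\s-1},
\]
so the first piece of $|H|$ is bounded by $C_\s\,(\varpi(K)/T)^{2\s-1}\sum_{k,\ell}|c_k||c_\ell|\,\tfrac{(k,\ell)^{2\s}}{(k\ell)^\s}$. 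For nonnegative coefficients---which is the only case used downstream (in Theorem~\ref{OFC1} one takes $c_d=d^{-\e}$)---this last sum \emph{is} $S$, and the comparison with $\zeta(2\s)S$ is immediate. The factor $1/\varpi(K)^{2\s-1}$ in the stated inequality appears to be a slip for $(\varpi(K)/T)^{2\s-1}$; the paper's own proof does not justify the stronger form either, and the applications only require the weaker one. Your concern about passing from $\sum|c_k||c_\ell|(\cdot)$ to the signed form $S$ is legitimate for genuinely complex $c_k$, but that issue is orthogonal to the $\varpi(K)$ exponent and is not resolved by the argument you sketch.
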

\begin{proof}A standard consequence of  Montgomery and Vaughan version of Hilbert's inequality is 
\begin{equation*}   \bigg|\int_0^T \Big|  \sum_{k\in K}   c_kk^{it}\Big|^2\dd t -  T \sum_{k\in K}
|c_k|^2
\bigg|\le \frac{4\pi }{\d}  \sum_{k\in K}|c_k|^2  
\end{equation*}
where $$\d= \min\{ \log \frac{\ell}{k}, \ell >k, \ell, k\in K\}.$$
Plainly $\d\ge 1/CK^+$ where $C$ is an absolute constant. Thus
\begin{equation*}  \frac{1}{T}\int_0^T \Big|  \sum_{k\in K}   c_kk^{it}\Big|^2\dd t \le    \sum_{k\in K}
|c_k|^2 (1+ C\frac{K^+ }{T} ) . 
\end{equation*}  
 This along with  Lemma \ref{zeta2} implies that
\begin{eqnarray*}
  (1+ C\frac{K^+ }{T} )\Big(\sum_{k\in K}
|c_k|^2  \Big)  \ \max_{1\le t\le T}  |\zeta(\s + it)|^2   &  \ge & \Big( \zeta(2\s)-\frac{C_\s}{\varpi (K)^{ 2\s-1}}  \Big)
 \sum_{k,\ell =1}^Nc_k \overline{c}_\ell \frac{(k,\ell)^{2\s}}{(k\ell)^\s}  
  \cr & & \quad -  T^{1-2\s}\Big(\sum_{  \ell \in K } |c_\ell| \Big) \Big(\sum_{  k \in K }|c_k|     k ^{1-\s}\log ( kT)  \Big\}. \end{eqnarray*}
\end{proof}
 
 \begin{proof}[Proof of Theorem \ref{OFC1}]Let $K$ be 	an FC set and choose $c_d= d^{-\e}$, $d\in K$ (that choice  is natural in regard of the form of $b_n$, with $a_d$ in place of $c_d$, see before (\ref{hi})). 
We deduce from (\ref{hi}) that
$$\sum_{n\in K }\frac{1}{n^{2\e}}\s_{-s+\e}( n)^2 \le \zeta(2\s)\sum_{   d,\d\in K}\frac{(d,\d)^{2\s}}{(d\d)^\s}
(d\d)^{-\e} $$
Consequently, if $ K^+ \le T $ and  
$ \varpi (K)\ge c_\s:=(\frac{2C_\s}{\zeta(2\s)  })^{   1/(2\s-1)}  $,
\begin{eqnarray*}
   (1+ C)\Big(\sum_{n\in K } \frac{1}{n^{ 2\e}}  \Big)  \ \max_{1\le t\le T}  |\zeta(\s + it)|^2
  &  \ge & \frac{\zeta(2\s)}{2}
 \sum_{n\in K }\frac{1}{n^{2\e}}\s_{-s+\e}( n)^2  
  \cr & &  - \frac{1}{T^{ 2\s-1}}  \Big(\sum_{  \ell \in K } \frac{1}{\ell^{ \e}} \Big) \Big(\sum_{  k \in K }      k ^{1-\s-\e }\log ( kT)  \Big\}.
\end{eqnarray*} \end{proof}

\begin{proof}[Proof of Theorem \ref{OFC2}]Let $\nu$ be some positive integer. Choose $K$ to be the set of all divisors of $\nu$, which is obviously an FC
set. Then
$$\sum_{n\in K }\frac{1}{n^{2\e}}\s_{-s+\e}( n)^2 =\sum_{n|\nu}\frac{\s_{-s+\e}( n)^2}{n^{2\e}} 
 , \qq \sum_{n\in K } \frac{1}{n^{ 2\e}} =\s_{-2\e} (\nu).$$

Thus for $\nu$ and $T$ such that $\nu \le T$ and $\max_{ [k,\ell ]|\nu}\ \frac{ (k\vee \ell) }{(k, \ell)}\ge c_\s $ 
\begin{eqnarray*}
   (1+ C)  \max_{1\le t\le T}  |\zeta(\s + it)|^2
  &  \ge & \frac{\zeta(2\s)}{2}
\frac{1}{ \s_{-2\e} (\nu)}\sum_{n|\nu}\frac{\s_{-s+\e}( n)^2}{n^{2\e}} 
   -\frac{\s_{-\e} (\nu)\s_{1-\s-\e} (\nu) \log ( \nu T)}{  \s_{-2\e} (\nu)T^{(2\s-1)}}  .
\end{eqnarray*}
 If $\nu$ and $T$ are such that
$$ \frac{\s_{-\e} (\nu)\s_{1-\s-\e} (\nu) \log ( \nu T)}{  \sum_{n|\nu}\frac{\s_{-s+\e}(
n)^2}{n^{2\e}}  }\le \frac{\zeta(2\s)}{4}  T^{(2\s-1)},   $$
we deduce that 
\begin{eqnarray*}
    \max_{1\le t\le T}  |\zeta(\s + it)| 
   &  \ge & c \Big(
\frac{1}{ \s_{-2\e} (\nu)}\sum_{n|\nu}\frac{\s_{-s+\e}( n)^2}{n^{2\e}} \Big)^{1/2}
    ,
\end{eqnarray*}
with $c^2=\frac{\zeta(2\s)}{4 (1+ C) }$ as claimed. 
\end{proof}


\section{\bf Proof of Theorem \ref{c1a}}\label{s4}
  Basically, the principle of the proof consists with showing that the studied case   belongs to
the "domain of attraction" of Carleson's theorem. First, recall for reader's convenience Lemma 8.3.4 from \cite{Wb}. 
\begin{lemma}\label{entro}
Let $\g>1$, $0<\beta\le 1$ and consider a finite
collection of random variables $E=\bigl( X_1,\dots , X_N \bigr)
\subset L^\g(\P)$,  and reals $0\le t_1\le t_2\le \dots \le t_N \le
1$ such that
$$
\|X_j-X_i\|_\g  \le (t_j -t_i)^{\beta }  \qq (\forall 1\le i\le j\le
N).
 $$
Then, there exists a constant $K_{\beta,\g}$
depending on $\beta,\g$ only, such that
 \begin{equation*}
\bigl\|  {\sup_{1\le i,j\le N}   |X_i -X_j |}  \bigr\|_\g
\,\le  \,
\begin{cases}
K_{\beta,\g}   &\text{if $\beta\g>1$,}
\\
K_{\beta,\g} \log  N&\text{if $\beta\g= 1$,}
\\
   K_{\beta,\g} N^{{1\over \g}-\beta}&\text{if $\beta\g<1$.}
\end{cases}
 \end{equation*}
\end{lemma}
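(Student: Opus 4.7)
\emph{Plan of proof.} This is a classical metric chaining estimate, and I would prove it by a dyadic chaining argument over the interval $[0,1]$ combined with the elementary bound $\|\max_{1\le k\le K}|Z_k|\|_\g \le K^{1/\g}\max_k\|Z_k\|_\g$ valid for any random variables in $L^\g(\P)$. It suffices to control $\sup_{1\le i\le N}\|X_i-X_1\|_\g$, because then the double supremum follows by the triangle inequality at the cost of a factor $2$. Set $J=\lceil\log_2 N\rceil$. For each scale $n=0,1,\ldots,J$, partition $[0,1]$ into the $2^n$ half-open dyadic intervals of length $2^{-n}$, and for each $i$ let $\pi_n(i)$ be a canonical representative, say the least $k$ such that $t_k$ lies in the same dyadic interval of scale $n$ as $t_i$. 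Then $\pi_0(i)\equiv 1$, $\pi_J(i)=i$, and since the scale-$n$ interval of $t_i$ is contained in its scale-$(n-1)$ interval, $|t_{\pi_n(i)}-t_{\pi_{n-1}(i)}|\le 2^{-(n-1)}$.

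The telescoping identity $X_i-X_1=\sum_{n=1}^{J}(X_{\pi_n(i)}-X_{\pi_{n-1}(i)})$ together with the triangle inequality in $L^\g$ gives
$$
\Bigl\|\sup_{1\le i\le N}|X_i-X_1|\Bigr\|_\g \le \sum_{n=1}^{J}\Bigl\|\max_{1\le i\le N}|X_{\pi_n(i)}-X_{\pi_{n-1}(i)}|\Bigr\|_\g.
$$
At level $n$ the pair $(\pi_n(i),\pi_{n-1}(i))$ takes at most $\min(N,2^n)$ distinct values, and by the hypothesis together with the estimate $|t_{\pi_n(i)}-t_{\pi_{n-1}(i)}|\le 2^{-(n-1)}$ each such increment has $L^\g$-norm at most $C_\beta\, 2^{-n\beta}$. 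Applying the elementary $L^\g$ maximal bound yields
$$
\Bigl\|\sup_i|X_i-X_1|\Bigr\|_\g \le C_\beta\sum_{n=1}^{J}\min(N,2^n)^{1/\g}\,2^{-n\beta}.
$$

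The three regimes then fall out by direct inspection of this sum. For levels with $2^n\le N$ the summand equals $2^{n(1/\g-\beta)}$; for levels with $2^n>N$ the summand is $N^{1/\g}2^{-n\beta}$, which forms a convergent geometric tail dominated by the terminal term of the first block. If $\beta\g>1$ the exponent $1/\g-\beta$ is negative, so the first block is a convergent geometric series and the total is $O(1)$. If $\beta\g=1$ the summand in the first block is constant, and the $J\asymp\log N$ terms produce the $\log N$ bound. If $\beta\g<1$ the sum is dominated by its terminal term, of order $N^{1/\g-\beta}$. The main obstacle is in fact purely bookkeeping: one must choose the chain $\pi_n(\cdot)$ and the transition level $2^n\asymp N$ with enough care that the metric increment bound $2^{-(n-1)}$ is manifestly respected and that the switch from the geometric estimate $(2^n)^{1/\g}$ to the trivial estimate $N^{1/\g}$ is seamless; the $\min(N,2^n)$ factor is precisely what absorbs this transition.
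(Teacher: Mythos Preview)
The paper does not actually prove this lemma; it is quoted verbatim from the author's monograph \cite{Wb} (Lemma~8.3.4) and then used as a black box in the proofs of Theorems~\ref{c1a}, \ref{t1a} and \ref{t1ab}. Your dyadic chaining argument is the standard route to such Kolmogorov-type maximal inequalities and is essentially correct.

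One small repair is needed. Your claim that $\pi_J(i)=i$ when $J=\lceil\log_2 N\rceil$ is not justified: nothing in the hypotheses prevents several of the $t_i$ from lying in the same dyadic interval of length $2^{-J}$ (indeed they may coincide). The fix is costless. Append one final link $X_i-X_{\pi_J(i)}$ to each chain; since $t_i$ and $t_{\pi_J(i)}$ belong to the same scale-$J$ interval, this increment has $L^\g$-norm at most $2^{-J\beta}$, and there are at most $N$ such increments. The resulting contribution
\[
\Bigl\|\max_{1\le i\le N}\bigl|X_i-X_{\pi_J(i)}\bigr|\Bigr\|_\g \le N^{1/\g}\,2^{-J\beta}\le N^{1/\g-\beta}
\]
is absorbed by the claimed bound in all three regimes (it is $O(1)$ when $\beta\g\ge 1$ and exactly of the right order when $\beta\g<1$). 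With this adjustment your proof is complete.
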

This standard Lemma  will be used repeatedly. Let $\{N_j, j\ge 1\}$ be an increasing sequence of integers to be
specified later on.    Let $S_n=
\sum_{k=1}^n c_kf_k$, $n\ge 1$. Put 
\begin{eqnarray*} & & R^J=\sum_{m=1}^J a_me_m,  \qq
\quad r^J=\sum_{m=J+1}^\infty a_me_m 
 .
    \end{eqnarray*}
     We
decompose 
$S_n 
 $ as follows: if $N_j\le n<N_{J+1}$,  then for some $J=J(j)$ depending on  $j$,  the value of which being specified in the course of
the proof, we write that
$$S_n=
\sum_{k=1}^n c_kf_k=\sum_{k=1}^n c_kR^{J }_k+ \sum_{k=1}^n c_kr^J_k  .$$
This way to proceed is not new; we refer for instance to Theorem 2.6 in \cite{BW} where it is used already in the proof. By
Carleson-Hunt's inequality, 
 \begin{eqnarray*}\Big\|\sup_{N_{j  }\le u\le v\le N_{j+1}}\big|\sum_{u\le k\le v}c_kR^J _{ k}
\big|\Big\|_2&\le &\sum_{m=1}^J
|a_{m }|\,\Big\|\sup_{N_{j  }\le u\le v\le
N_{j+1}}\big|\sum_{u\le k\le v}c_ke_{km}\big|\Big\|_2  
 \cr &\le & C\Big(\sum_{m=1}^J |a_{m }|\Big) \Big(\sum_{N_{j  }\le k \le
N_{j+1}} c_k^2\Big)^{1/2} . 
\end{eqnarray*}

 We will show in (\ref{t}), (\ref{iii}) by using Abel summation that the series $\sum_{m\ge 1}^\infty a_{m }^2 d(m)$ is convergent in each
of the considered cases (i)-(iii), and
  we will estimate the
tail
$\sum_{m>L}^\infty a_{m }^2 d(m)$. By Theorem
\ref{c1},  
 \begin{eqnarray*}  \big\|\sum_{u\le k\le v}c_kr^J _{ k}
\big\|_2^2   &\le & \Big( \sum_{m=J+1}^\infty a_{m }^2 d(m)
\Big)    
\sum_{u\le k\le v}  c_{k}^2d(k^2)
 .
    \end{eqnarray*}
By   Lemma \ref{entro} we deduce
\begin{eqnarray*} & &\Big\|\sup_{N_{j  }\le u\le v\le N_{j+1}}\big|\sum_{u\le k\le v}c_kr^J _{ k}
\big|\Big\|_2^2 \cr &\le &   C  (\log N_{j+1})^2\Big( \sum_{m=J+1}^\infty a_{m }^2 d(m)
\Big)  \sum_{N_{j  }\le k\le N_{j+1}}  c_{k}^2d(k^2).
    \end{eqnarray*}
By combining both estimates we arrive to
\begin{eqnarray}\label{comb}& &  \Big\|\sup_{N_{j  }\le u\le v\le N_{j+1}}\big|\sum_{u\le k\le v}c_kf_{ k}
\big|\Big\|_2^2     \le     C\Big(\sum_{m=1}^J |a_{m }|\Big)^2 \Big(\sum_{N_{j  }\le k \le
N_{j+1}} c_k^2\Big) 
\cr  & & \qq\qq\qq\qq + C  (\log N_{j+1})^2\Big( \sum_{m=J+1}^\infty a_{m }^2 d(m)
\Big)  \sum_{N_{j  }\le k\le N_{j+1}}  c_{k}^2d(k^2).
    \end{eqnarray}
Notice that if
 $a_m= o(m^{-\a} )$, $\a>1/2$, 
 we have  
by applying   Abel summation and using the well-known estimate  $ \sum_{m\le \ell}d(m) \le C \ell\log \ell$, 
\begin{eqnarray}\label{t}\sum_{m=L+1}^\infty a_{m }^2 d(m)&\le&\sum_{m=L+1}^\infty \frac{d(m)}{m^{2\a}}\le C\Big\{\sum_{m=L+1}^\infty
\frac{\log m}{m^{2\a}}
  + \sup_{m>L}  m^{1-2\a }\log m\Big\}\cr &\le& C_\a L^{1-2\a}(\log L). 
\end{eqnarray}

Now we give the proof of assertion (i).  Let $1/2<\a<1$. We then deduce from (\ref{comb}), (\ref{t})
\begin{eqnarray}\label{comb1} & &  \Big\|\sup_{N_{j  }\le u\le v\le N_{j+1}}\big|\sum_{u\le k\le v}c_kf_{ k}
\big|\Big\|_2^2  \cr   &\le   &  C\Big(\sum_{N_{j  }\le k \le
N_{j+1}} c_k^2d(k^2)\Big) \Big\{\Big(\sum_{m=1}^J |a_{m }|\Big)^2  
       + C  (\log N_{j+1})^2\Big( \sum_{m=J+1}^\infty a_{m }^2 d(m)
\Big) \Big\} 
\cr &\le   &  C_\a \Big(\sum_{N_{j  }\le k \le
N_{j+1}} c_k^2d(k^2)\Big) \Big\{J^{2(1-\a)}  
   +    (\log N_{j+1})^2 J^{1-2\a }\log J   \Big\}  .
    \end{eqnarray}
We choose $N_j$ so that
$$  (\log N_{j })^{4(1-\a)}  ( \log\log N_{j } )^{2(1-\a)}\sim j^4. $$
 Next choose $J $ so that 
$$\frac{J}{\log J}  
   \sim    (\log N_{j+1})^2   . $$
Then
 $J^{2(1-\a)} \sim   (\log N_{j+1})^{4(1-\a)}(\log\log N_{j+1})^{2(1-\a)} $.  And we obtain
\begin{eqnarray*}  & &  \Big\|\sup_{N_{j  }\le u\le v\le N_{j+1}}\big|\sum_{u\le k\le v}c_kf_{ k}
\big|\Big\|_2^2  
\cr &\le   &  C_\a \Big(\sum_{N_{j  }\le k \le
N_{j+1}} c_k^2d(k^2)\Big)(\log N_{j+1})^{4(1-\a)}(\log\log N_{j+1})}^{2(1-\a)  
 \cr &\le   &  C_\a \Big(\sum_{N_{j  }\le k \le
N_{j+1}} c_k^2{(\log k)^{ 4(1-\a)  }}({\log\log k})^{2(1-\a)} d(k^2)\Big)    
       .
    \end{eqnarray*}
In view of the assumption made, 
we deduce
\begin{eqnarray} \label{acv} \sum_{j} \Big\|\sup_{N_{j  }\le u\le v\le N_{j+1}}\big|\sum_{u\le k\le v}c_kf_{ k}
\big|\Big\|_2^2  
   <\infty     .
    \end{eqnarray}
 By Tchebycheff's inequality and by using Theorem \ref{c1} and (\ref{t}) with $L=2$,
   \begin{eqnarray*}& &\l\Big\{\big|\sum_{N_{j  }< k\le N_{j+1}}  c_kf_k\big|> j^{-2}\Big\}  \le   
 {C_\a}{j^4}  
    \sum_{N_{j  }\le k \le
N_{j+1}} c_k^2   d(k^2) 
\cr &\le & C_\a   \frac{j^4 }{   (\log N_{j })^{4(1-\a)}  ({\log\log N_{j }})^{2(1-\a)} }
     \sum_{N_{j  }\le k \le
N_{j+1}} c_k^2 d(k^2)  (\log N_{j })^{4(1-\a)}  ({\log\log N_{j }})^{2(1-\a)}  \cr &\le &  
  C_\a    
     \sum_{N_{j  }\le k \le
N_{j+1}} c_k^2 d(k^2) (\log k)^{4(1-\a)}  ({\log\log k})^{2(1-\a)}
     . \end{eqnarray*} 
The assumption made implies that 
$$\sum_j\l\Big\{\big|\sum_{N_{j  }< k\le N_{j+1}}  c_kf_k\big|> j^{-2}\Big\}<\infty. $$
 By the Borel-Cantelli lemma,     the series $\sum _j|  \sum_{N_{j  }< u\le 
N_{j+1}}    c_kf _{ k}
 | $    converges almost everywhere. As by (\ref{acv}), 
the oscillation of partial sums around this subsequence is almost surely asymptotically tending to
$0$, this   allows  to conclude.
 
 \vskip 3 pt We continue with giving the  proof of assertion (ii). If $\a=1$, (\ref{comb1}) is slightly modified as follows
\begin{eqnarray}\label{comb11} & &  \Big\|\sup_{N_{j  }\le u\le v\le N_{j+1}}\big|\sum_{u\le k\le v}c_kf_{ k}
\big|\Big\|_2^2  
\cr &\le   &  C  \Big(\sum_{N_{j  }\le k \le
N_{j+1}} c_k^2d(k^2)\Big) \Big\{(\log J)^2  
   +    (\log N_{j+1})^2 \frac{\log J}{J}    \Big\}  .
    \end{eqnarray}
 We choose $N_j$ so that
$$   \log\log N_{j } \sim j^2 .$$
And we choose $J$ so that 
$$ \frac{J} {\log J}  \sim (\log N_{j+1})^2. $$
We deduce
\begin{eqnarray*}   \Big\|\sup_{N_{j }\le u\le v\le N_{j+1}}\big|\sum_{u\le k\le v}c_kf_{ k}
\big|\Big\|_2^2  
 &\le   &  C  \Big(\sum_{N_{j  }\le k \le
N_{j+1}} c_k^2d(k^2)\Big)  (\log\log N_{j+1})^2  
  \cr &\le   &  C  \sum_{N_{j  }\le k \le
N_{j+1}} c_k^2d(k^2)(\log\log k)^2   .
    \end{eqnarray*}
According to the assumption made,
\begin{eqnarray}\label{comb11}\sum_j \Big\|\sup_{N_{j  }\le u\le v\le N_{j+1}}\big|\sum_{u\le k\le v}c_kf_{ k}  \big|\Big\|_2^2<\infty. 
\end{eqnarray}
  By Tchebycheff's inequality and by using Theorem \ref{t1},
   \begin{eqnarray*}\l\Big\{\big|\sum_{N_{j  }< k\le N_{j+1}}  c_kf_k\big|> j^{-2}\Big\} &\le &  
 {C}{j^4}  
    \sum_{N_{j  }\le k \le
N_{J+1}} c_k^2   d(k^2) 
\cr &\le & C\frac{j^4}{(\log\log N_{j+1})^2 }  
     \sum_{N_{j  }\le k \le
N_{j+1}} c_k^2 d(k^2)  (\log\log k)^2    \cr &\le &  
  C     \sum_{N_{j  }\le k \le
N_{j+1}} c_k^2 d(k^2)  (\log\log k)^2
     . \end{eqnarray*} 
 By the Borel-Cantelli lemma,     the series $\sum _j|  \sum_{N_{j  }< u\le  N_{j+1}}    c_kf _{ k}
 | $    converges almost everywhere. This along with (\ref{comb11}) allows  to conclude. 

 \vskip 3 pt Finally we give  the proof of assertion (iii). 
 By using again Abel summation
\begin{eqnarray} \label{iii}\sum_{m>L}  a_{m }^2 d(m)\le C\Big\{\sum_{m>L}  \frac{1}{m(\log m)^{  h}}  +
\sup_{m>L}
\frac{1}{ (\log m)^{ h} } \Big\}\le   \frac{C_h}{ (\log L)^{  h-1} }. 
\end{eqnarray} 
And $\sum_{m=1}^J |a_{m }|\le C_h\sqrt{ J/(\log J)^{1+h}}$. Then by (\ref{comb}), 
\begin{eqnarray}\label{comb12} & &  \Big\|\sup_{N_{j }\le u\le v\le N_{j+1}}\big|\sum_{u\le k\le v}c_kf_{ k}
\big|\Big\|_2^2  \cr   &\le   &  C_h\Big(\sum_{N_{j  }\le k \le
N_{j+1}} c_k^2d(k^2)\Big) \Big\{  \frac{ J}{(\log J)^{1+h}}   
       + C_h   \frac{(\log N_{j+1})^2}{ (\log J)^{  h-1} }
  \Big\} 
  .
    \end{eqnarray}
 We choose $N_j$ so that
  $$       \frac { (\log N_{j+1})^2}{    (\log\log N_{j+1})^{h-1}  } \sim j^4  .$$
  We choose this time $J$ so that $\frac{ J}{(\log J)^{1+h}}\sim \frac{(\log N_{j+1})^2}{ (\log J)^{  h-1} }$, namely 
$$    \frac{ J}{(\log J)^{2}}   
     \sim  (\log N_{j+1})^2   .$$
  Thus $\log J \sim \log\log N_{j+1}$  and 
$$\frac{ J}{(\log J)^{1+h}}\sim \frac{(\log N_{j+1})^2}{ (\log J)^{  h-1} }\sim  \frac{ (\log
N_{j+1})^2}{    (\log\log N_{j+1})^{h-1}  }      .$$

We deduce
\begin{eqnarray*}  & &  \Big\|\sup_{N_{j  }\le u\le v\le N_{j+1}}\big|\sum_{u\le k\le v}c_kf_{ k}
\big|\Big\|_2^2  \cr    &\le   &   \Big(\sum_{N_{j  }\le k \le
N_{j+1}} c_k^2d(k^2)\Big) \Big\{C_h  \frac{ J}{(\log J)^{1+h}}   
       + C'_h   \frac{(\log N_{j })^2}{ (\log J)^{  h-1} }
  \Big\} 
\cr &\le   &  C_h\Big(\sum_{N_{j  }\le k \le
N_{j+1}} c_k^2d(k^2)\Big)   \frac{ (\log N_{j+1})^2}{    (\log\log N_{j+1})^{h-1}  }     
  \cr   &\le   &  C_h \sum_{N_{j  }\le k \le
N_{j+1}} c_k^2d(k^2)    \frac{ (\log k)^2}{    (\log\log k)^{h-1}  }    .
    \end{eqnarray*}
Using the assumption made, it follows that     
$$\sum_{j\ge 1}\ \Big\|\sup_{N_{j  }\le u\le v\le N_{j+1}}\big|\sum_{u\le k\le v}c_kf_{ k}
\big|\Big\|_2^2 <\infty.$$
We conclude by proceeding exactly as before.    Noticing first from estimate (\ref{iii})
 that $\sum_{m\ge 1}  a_{m }^2 d(m)\le      C_h  $, by Tchebycheff's inequality and   Theorem \ref{c1}, we get 
   \begin{eqnarray*}& & 
\l\Big\{\big|\sum_{N_{j  }< k\le N_{j+1}}  c_kf_k\big|>  \frac{1}{j^2} \Big\}\le C
j^4  \sum_{N_{j  }\le k \le N_{j+1}} c_k^2   d(k^2)
\cr &\le &  
 {C_h}j^4  \frac {    (\log\log N_{j+1})^{h-1}  }{ (\log N_{j+1})^2}
    \sum_{N_{j  }\le k \le
N_{j+1}} c_k^2   d(k^2)    \frac{ (\log k)^2}{    (\log\log k)^{h-1}  }
\cr &\le & C_h\sum_{N_{j  }\le k \le
N_{j+1}} c_k^2   d(k^2)   \frac{ (\log k)^2}{    (\log\log k)^{h-1}  } 
     . \end{eqnarray*} 
 By the Borel-Cantelli lemma,     the series $\sum _j|  \sum_{N_{j  }< u\le  N_{j+1}}    c_kf _{ k}
 | $    converges almost everywhere.  We conclude as before.

 
\section{\bf Proof of Theorems
\ref{t1a}, \ref{t1ab}.}\label{s5}
 Let $\{N_j, j\ge 1\}$ be an increasing unbounded sequence of positive reals. 
We write
  $$\sum_{N_j\le k<N_{j+1}}  c_k f_k=\sum_{N_j\le k<N_{j+1}}  c_k R^J_k+ \sum_{N_j\le k<N_{j+1}}  c_k r^J_k$$ where  
$$R^J (x) = \sum_{1\le \ell<J}  \frac{\sin 2\pi \ell x}{\ell }, \qq r^J (x) = \sum_{\ell>J}  \frac{\sin 2\pi \ell x}{j },$$
 and     $J  $ is a real number greater than $1$ and   defined
later on with respect to $j$.
\vskip 2 pt \noi  {\it Proof of Theorem \ref{t1ab}.} Let $b>0$.  We choose 
 $ N_j $ so that
$\log\log N_{j}= j^{\b/b}$ for some $\b>2$.      As
$f\in {\rm BV}(\T)$,
$a_j=
\mathcal O (j^{-1})$, and so
    $$ \sup_{N_{j  }\le u\le v\le N_{j+1}}\big|\sum_{u\le k\le v}c_kR^J _{ k}(x)\big| \le C\sum_{\ell=1}^J\frac{1}{\ell }\sup_{N_{j  }\le
u\le v\le N_{j+1}}\big|\sum_{u\le k\le v}c_k\sin 2\pi k\ell   x\big|. 
$$ 
    By using   Carleson-Hunt's maximal inequality   
\begin{eqnarray}\label{E1}\Big\|\sup_{N_{j  }\le u\le v\le N_{j+1}}\big|\sum_{u\le k\le v}c_kR^J_k \big|\Big\|_2&\le &\sum_{\ell=1}^J
\frac{1}{\ell }\Big\|\sup_{N_{j  }\le u\le v\le
N_{j+1}}\big|\sum_{u\le k\le v}c_k\sin 2\pi k\ell   x\big|\Big\|_2  
 \cr &\le & C(\log J)\, \Big(\sum_{N_{j  }\le u \le
N_{j+1}} c_k^2\Big)^{1/2} . 
\end{eqnarray}

We now combine our Theorem \ref{p1} with the $(\e,1-\e)$ argument introduced in \cite{ABS}. Let $0<\e<1/2$. 
From the bound  
\begin{eqnarray*}   \sum_{i,j>J\atop jk=i\ell} \frac{1}{ij}&\le& C\min \Big(\frac{(k,\ell)}{(k\vee\ell )J}, \frac{(k,\ell)^2}{k\ell}\Big)\le C
\Big(\frac{(k,\ell)}{(k\vee\ell )J} \Big)^\e \Big(  \frac{(k,\ell)^2}{k\ell}\Big)^{1-\e}\cr 
&\le &\frac{C}{ J^\e}
  \Big(  \frac{(k,\ell)^2}{ k\ell  }\Big)^{1-\e/2}=\frac{C}{ J^\e}\langle f^{1-\e/2}_{ k}, f^{1-\e/2}_{\ell}\rangle , 
\end{eqnarray*}
  we get by applying Theorem \ref{p1}-(i),
\begin{eqnarray}\label{rest}\Big\| \sum_{u\le k\le v}c_kr^J_{ k} \Big\|_2^2  &\le &    
 \frac{C}{J^\e}\Big\| \sum_{u\le k\le v}|c_k|f^{1-\e/2}_{ k} \Big\|_2^2  
\cr   &\le&  \frac{C}{J^\e}\Big(\sum_{w \in F([u,v])} \frac{1}{\s_\tau(w) } \Big)\Big(\sum_{{u\le k \le v}}
  c_k^2  \s_{ \tau-2+\e}(k) \Big) . 
\end{eqnarray}
By taking $\tau =1+\e$ and using Corollary \ref{r1} this becomes, 
\begin{eqnarray*}\Big\| \sum_{u\le k\le v}c_kr^J_{ k} \Big\|_2^2 
&\le&  \frac{C}{\e J^\e} \sum_{{u\le k \le v}}
  c_k^2  \s_{ -1+2\e}(k)  . 
\end{eqnarray*}
  By using  Lemma  \ref{entro}, we obtain
\begin{eqnarray}\label{E2}   \Big\|\sup_{N_{j  }\le u\le v\le N_{j+1}} \big| \sum_{u\le k\le v}c_kr^J_{ k} \big|\Big\|_2^2    &\le&
\frac{C(\log N_{j+1})^{2  } }{\e J^\e}    
 \Big(\sum_{N_{j  }\le k \le N_{j+1}}
  c_k^2 \s_{ -1+2\e}(k)  \Big)   . 
\end{eqnarray} 
Combining (\ref{E1}), (\ref{E2}) gives 
\begin{eqnarray}\label{E3}
& & \Big\|\sup_{N_{j  }\le u\le v\le N_{j+1}}\big|\sum_{u\le k\le v}c_kf _{
k}
\big|\Big\|_2^2\cr &\le &  C(\log J)^2\, \Big(\sum_{N_{j  }\le u \le
N_{j+1}} c_k^2\Big) 
  + \frac{C  }{\e J^\e}  (\log N_{j+1})^{2  }  
 \Big(\sum_{N_{j  }\le k \le N_{j+1}}
  c_k^2 \s_{ -1+2\e}(k)  \Big)
\cr &\le &  C\Big(\sum_{N_{j  }\le u \le
N_{j+1}} c_k^2\s_{ -1+2\e}(k)\Big)  \Big((\log J)^2 +\frac{C(\log N_{j+1})^{2  } }{\e J^\e}   \Big)  . 
\end{eqnarray}
  Choose $\e,J$ as follows: 
$$    \log J  =  (\log\log N_{j+1} )^{1+b} , \qq \quad    \e =\frac{  2 }{  (\log\log N_{j+1} )^b} .$$
Then
 $J^\e= \exp \{\e \log J \}=\exp \{2  \log\log N_{j+1} \}= (\log N_{j+1})^{2  }  $. So that
$$\frac{ (\log N_{j+1})^{2  } }{\e J^\e} = \frac {  (\log\log N_{j+1} )^b}{  2 }$$
 We deduce that
\begin{eqnarray*}\Big\|\sup_{N_{j  }\le u\le v\le N_{j+1}} \big| \sum_{u\le k\le v}c_kf_{ k} \big|\Big\|_2^2  
    &\le & C  (\log\log N_{j+1})^{2(1+b)} \sum_{N_{j  }\le k \le N_{j+1}}
  c_k^2   \s_{ -1+2\e}(k)
\cr  &\le&  C_b   \sum_{N_{j  }\le k \le N_{j+1}}
  c_k^2   (\log\log k)^{2(1+b)} \s_{ -1+\frac{4}{(\log\log k)^b}  }(k) . 
\end{eqnarray*}
 By Tchebycheff's inequality,
   \begin{eqnarray*}& &\l\Big\{\sup_{N_{j  }\le u\le v\le N_{j+1}}\big|\sum_{u\le k\le v} c_kf_k \big|> j^{-\b/2}\Big\} 
\cr & \le  &  
  C_b j^{ \b}  \sum_{N_{j  }\le k \le N_{j+1}}
  c_k^2   (\log\log k)^{2(1+b)} \s_{ -1+\frac{4}{(\log\log k)^b}  }(k)  
\cr &\le &   
 \frac{ C_b j^{ \b} (\log\log N_{j+1})^{b}}{(\log\log N_{j+1})^{b}}  \sum_{N_{j  }\le k \le N_{j+1}}
  c_k^2  \s_{ -1+\frac{4}{(\log\log k)^b}  }(k) (\log\log N_{j+1})^{2(1+b)}
     \cr &\le &  
    C_b  \sum_{N_{j  }\le k \le N_{j+1}}
  c_k^2   \s_{ -1+\frac{4}{(\log\log k)^b}  }(k)(\log\log k)^{2 +3b}   
         . \end{eqnarray*} 
By the Borel-Cantelli lemma and the assumption made,     the series 
$$\sum _j  \sup_{N_{j  }\le u\le v\le N_{j+1}}\big|\sum_{u\le k\le v} c_kf_k \big| $$     converges almost everywhere. This allows to
conclude. 
\vskip 10 pt

\vskip 2 pt \noi  {\it Proof of Theorem \ref{t1a}.} The  main change will be in the treatment of the contribution due to the sum $R^J$. Let
$\b>1$. Choose $N_j
=e^{e^{j^B}}$ with
$B=2\b/\d$ and $\d$ is a (small) positive real.   From  estimate
\ref{gron}  follows that,
  $$\s_{ -1+2\e}(k)   \le  \exp\big\{\frac{ \varrho}{  2\e}\frac{(\log k)^{2\e}}{\log\log k}\big\} .$$
 where $\varrho$ is some positive number. 
Thus  (\ref{E2}) with Corollary \ref{r1} gives
\begin{eqnarray*}\Big\| \sum_{u\le k\le v}c_kr^J_{ k} \Big\|_2^2   &\le&    \frac{C}{J^\e} \exp\Big\{\frac{\varrho}{  2\e}\frac{(\log
N_{j+1})^{2\e }}{\log\log N_{j+1}}\Big\}\Big(\sum_{{u\le k \le v}}
  c_k^2   \Big) . 
\end{eqnarray*}
By using again Lemma  \ref{entro} we obtain
\begin{eqnarray*}  & &\Big\|\sup_{N_{j  }\le u\le v\le N_{j+1}} \Big| \sum_{u\le k\le v}c_kr^J_{ k} \Big|\Big\|_2^2  \cr  &\le& \frac{C}{\e J^\e}  
(\log N_{j+1})^{2  }
\exp\Big\{ \frac{\varrho}{  2\e}\frac{(\log N_{j+1})^{2\e }}{\log\log N_{j+1}}\Big\}\Big(\sum_{N_{j  }\le k \le N_{j+1}}
  c_k^2   \Big)   . 
\end{eqnarray*} Choose $\e, J$ so that
$$\e J^\e=    (\log
N_{j+1})^{2  }\exp\Big\{\frac{\varrho}{   \e}\frac{(\log
N_{j+1})^{2\e }}{\log\log N_{j+1}}\Big\} ,\qq \quad  \e =\frac{ \log\log\log N_{j+1} }{2\log\log N_{j+1} } .$$
  We get 
\begin{eqnarray*}\Big\|\sup_{N_{j  }\le u\le v\le N_{j+1}} \Big| \sum_{u\le k\le v}c_kr^J_{ k} \Big|\Big\|_2^2  
  &\le&     \sum_{N_{j  }\le k \le N_{j+1}}
  c_k^2    . 
\end{eqnarray*}
We have 
\begin{eqnarray*}   \log J &=&\frac{1}{\e}\log\frac{  1}{\e}+ (\frac{2}{\e})\log\log N_{j+1}+  \frac{\varrho}{   \e^2}\frac{(\log
N_{j+1})^{2\e }}{\log\log N_{j+1}} 
  \end{eqnarray*}
Further
$$\frac{1}{\e}\log\frac{  1}{\e}=\frac{2\log\log N_{j+1} }{ \log\log\log N_{j+1} }\log \Big(\frac{2\log\log N_{j+1} }{ \log\log\log
N_{j+1} }\Big)\sim 2\log\log N_{j+1},$$ and $$(\log
N_{j+1})^{2\e }=e^{  (\log\log N_{j+1} )(\log\log\log N_{j+1})/ (\log\log N_{j+1} ) }= \log\log N_{j+1} . $$
Thus
  \begin{eqnarray*} \log J &\sim& 2(\log\log N_{j+1})  +  \frac{4 (\log\log N_{j+1})^2}{(\log\log\log N_{j+1})}   +  \frac{\varrho}{  
\e^2}\frac{(\log N_{j+1})^{2\e }}{\log\log N_{j+1}} 
\cr &=&2(\log\log N_{j+1})  +  \frac{4 (\log\log N_{j+1})^2}{(\log\log\log N_{j+1})}   +   4\varrho  \frac{(\log\log N_{j+1})^{2 
}}{(\log\log\log N_{j+1})^2}  
  \cr &\le & C \frac{  (\log\log N_{j+1})^2}{(\log\log\log N_{j+1})}   , \end{eqnarray*}
   
 Now by (\ref{E1}),
\begin{eqnarray*}\Big\|\sup_{N_{j  }\le u\le v\le N_{j+1}}\big|\sum_{u\le k\le v}c_kR^J_{ k} \big|\Big\|_2^2 
 &\le & C  
 (\log J)^2 \Big(\sum_{N_{j  }\le u \le
N_{j+1}} c_k^2\Big) 
\cr  &\le & C  
 \frac{  (\log\log N_{j+1})^4}{(\log\log\log N_{j+1})^2}  \Big(\sum_{N_{j  }\le u \le
N_{j+1}} c_k^2\Big) 
\cr  &\le & C  
    \sum_{N_{j  }\le k \le
N_{j+1}} c_k^2\frac{  (\log\log k)^4}{(\log\log\log k)^2}    . 
\end{eqnarray*}
By combining 
\begin{eqnarray}\label{rmi}\Big\|\sup_{N_{j  }\le u\le v\le N_{j+1}}\big|\sum_{u\le k\le v} c_kf_k \big|\Big\|_2^2 
    &\le & C  
    \sum_{N_{j  }\le u \le
N_{j+1}} c_k^2\frac{  (\log\log k)^4}{(\log\log\log k)^2}       . 
\end{eqnarray}
  By the assumption made, this immediately implies that the series $$\sum_j \sup_{N_{j  }\le u\le v\le N_{j+1}}
 \big|\sum_{u\le k\le v} c_kf_k \big|^2
 $$  converges almost everywhere. And so 
 the oscillation of the partial sum sequence $\{\sum_{k= 1}^{N }c_kf_k, N\ge 1\}$ around the subsequence $\{\sum_{k= 1}^{N_j }c_kf_k, j\ge
 1\}$ tends to zero almost everywhere.

 By Tchebycheff's inequality,
   \begin{eqnarray*} \l\Big\{\sup_{N_{j  }\le u\le v\le N_{j+1}}\big|\sum_{u\le k\le v} c_kr^J_{ k} \big|> j^{-\b}\Big\} 
  &\le &  
  C j^{2\b}  
     \sum_{N_{j  }\le k \le
N_{j+1}} c_k^2   
  \cr&\le &  C   \frac{j^{2\b} }{  (\log\log N_{j+1}) }       \sum_{N_{j  }\le u
\le N_{j+1}} c_k^2\log\log k
 \cr&\le &  {C}
    \sum_{N_{j  }\le u \le
N_{j+1}} c_k^2     (\log\log k) 
     . \end{eqnarray*} 
By the Borel-Cantelli lemma,     the series 
$$\sum _j|  \sum_{N_{j  }< u\le  N_{j+1}}    c_kr^J_{ k}
 | $$
     converges almost everywhere. 
 We shall prove that the series $$\sum _j|  \sum_{N_{j  }< u\le  N_{j+1}}    c_kR^J_{ k}
 | $$ also converges almost everywhere. This part is more tricky.
We begin with a remark concerning the sum  related to the  component $R^J$. Recall   that 
$$ \langle R^J_k, R^J_\ell \rangle =
\sum_{i,h=1\atop  hk= i\ell}^J\frac{1}{ih}= \Big(
\sum_{1\le u\le J(\frac{ (k,\ell)}{k}\wedge
\frac{ (k,\ell))}{\ell}}^J\frac{1}{u^2}\Big)\frac{ (k,\ell)^2 }{k \ell}.$$ 
   The existence of a solution in $h$ and $i$  (automatically of the form $i=u\frac{k}{(k,\ell)},
h=u\frac{\ell  }{(k,\ell)}
$, $u\ge 1$)  imposes constraints on the integers 
$k,\ell$. These  must satisfy the following conditions 
$$\ell\le u\ell= (k,\ell)h\le J(k,\ell) ,\qq k\le uk=(k,\ell)i\le J(k,\ell),$$
that is    $ (k,\ell)\ge  \frac{(k\vee\ell)}{J} $. 
 Consequently, as obviously $ (k,\ell)\le (k\wedge \ell)$, it is necessary  to have
$$  \frac{1}{J}(k\vee\ell)\le (k\wedge \ell) .  $$
In this case  $0\le \langle R^J_k, R^J_\ell \rangle \le 
\zeta(2)\frac{ (k,\ell)^2 }{k \ell}$. Observe before continuing that in our situation $J\ll N_{j+1}$ while $N_j<k,\ell \le N_{j+1}$. 

Let $ h$ and $H$ be such that $J^{h} < N_j\le
J^{h+1}\le\ldots \le  J^{h+H-1} \le N_{j+1}< J^{h+H}$. It follows from the remark previously made and estimate (\ref{61}) that,  
\begin{eqnarray*}\big\| \sum_{N_j<k\le N_{j+1}}c_kR^J_k\big\|_2^2
&\le& \zeta(2 )\sum_{N_j<k,\ell \le N_{j+1}\atop   (k\vee\ell)\le {J} (k\wedge \ell)}|c_k||c_\ell | \frac{ (k,\ell)^2 }{k \ell}
\cr &= & \zeta(2 )\sum_{\m=h}^H\sum_{k\in  ]J^{\m} ,J^{\m+1}] \atop  \ell: (k\vee\ell)\le {J} (k\wedge \ell)}|c_k||c_\ell | \frac{ (k,\ell)^2 }{k \ell}
\cr &\le  & \zeta(2 )\sum_{\m=h}^H\ \sum_{ J^{\m}<k\le  J^{\m+1} \atop  \frac{1}{J}.J^{\m } \le \ell\le   J.J^{\m+1}   }|c_k||c_\ell | \frac{ (k,\ell)^2 }{k \ell}
\cr &\le  & \zeta(2 )\sum_{\m=h}^H\ \sum_{  J^{\m-1} \le k, \ell\le   J^{\m+2}   }|c_k||c_\ell | \frac{ (k,\ell)^2 }{k \ell}
\cr &\le  & \big(4\zeta(2 )  \log J \big) \sum_{\m=h}^H\sum_{   J^{\m-1} \le k \le   J^{\m+2}     } c_k^2 \s_{-1}(k)
\cr &\le   & C  \sum_{\m=h}^H\sum_{   J^{\m-1} \le k \le   J^{\m+2}     }    c_k^2 \frac{  (\log\log
k)^2}{ \log\log\log k }\s_{-1}(k)
\end{eqnarray*} 
 since  $N_j<k  \le N_{j+1}$ and
 \begin{eqnarray*} \log J &\sim&  4(1+o(1)) \frac{  (\log\log N_{j+1})^2}{(\log\log\log N_{j+1})} \sim 4(1+o(1)) \frac{  (\log\log
k)^2}{ \log\log\log k }
  \end{eqnarray*}
Therefore\begin{eqnarray}\label{Rest}\big\| \sum_{N_j<k\le N_{j+1}}c_kR^J_k\big\|_2^2
  &\le&   C  \sum_{ J^{-1}N_j <k\le  N_{j+1}J^2  }    c_k^2 \frac{  (\log\log
k)^2}{ \log\log\log k }\s_{-1}(k).
\end{eqnarray} 
 By Tchebycheff's inequality,
   \begin{eqnarray*}& &\l\Big\{ \big|\sum_{N_{j  }< k\le N_{j+1}} c_kR^J_k \big|> j^{-\b}\Big\} 
\cr &\le &  
 {C}j^{2\b}     \sum_{ J^{-1}N_j <k\le  N_{j+1}J^2  }    c_k^2 \frac{  (\log\log
k)^2}{ \log\log\log k }\s_{-1}(k)
    \cr &\le &  
 {C}     {  j^{2\b-B\d}(\log\log N_{j+1}})^\d   \sum_{ J^{-1}N_j <k\le  N_{j+1}J^2  }    c_k^2 \frac{  (\log\log
k)^2}{ \log\log\log k }\s_{-1}(k)  
 \cr &\le &  
 {C}     \sum_{ J^{-1}N_j <k\le  N_{j+1}J^2  }    c_k^2 \frac{  (\log\log
k)^{2+\d}}{ \log\log\log k }\s_{-1}(k)   \end{eqnarray*} 
Recall that $J=J(j)$ is associated with the interval $]N_j ,  N_{j+1}]$. Now observe that 
$$   \frac{N_{j+2}}{J(j+2)} > N_{j+1}J(j)^2.$$
Indeed, 
$$  J(j)^2J(j+2)\sim e^{\frac{8+o(1)}{B}\frac{j^B}{\log j}+\frac{8+o(1)}{B}\frac{(j+2)^B}{\log (j+2)}} .$$
Thus $J(j)^2J(j+2)\le e^{C_B j^B}$,  whereas for $j$ large
$$     \frac{N_{j+2}}{N_{j+1} }=e^{e^{(j+2)^B}-e^{(j+1)^B}}\ge  e^{e^{(j+2)^B}/2}
\gg J(j)^2J(j+2).$$
This means that the intervals $]J(j)^{-1}N_j,  N_{j+1}J(j)^2]$ $j= 2,4,6\ldots$ are disjoint. The same holds for the sequence of  intervals with odd indices.
Consequently, 
$$\sum _j\l\Big\{\big|\sum_{N_{j  }< k\le N_{j+1}} c_kR^J_k \big|> j^{-\b}\Big\}\le C \sum_{k  }    c_k^2 \frac{  (\log\log
k)^{2+\d}}{ \log\log\log k }\s_{-1}(k)<\infty, $$
by assumption. Hence by the Borel-Cantelli lemma, the series  
$$\sum_{j}\big|\sum_{N_{j  }< k\le N_{j+1}} c_kR^J_k \big|$$
converges almost everywhere. This allows to conclude. 
\begin{remark} It is interesting to notice that from estimates (\ref{E1}) and (\ref{Rest}) and Gronwall's estimate (\ref{gron1}),
\begin{eqnarray*}\Big\|\sup_{N_{j  }\le u\le v\le N_{j+1}}\big|\sum_{u\le k\le v}c_kR^J_k \big|\Big\|_2^2&\le & C  
    \sum_{N_{j  }\le k \le
N_{j+1}} c_k^2\frac{  (\log\log k)^4}{(\log\log\log k)^2} . 
\end{eqnarray*}
while \begin{eqnarray*}\big\| \sum_{N_j<k\le N_{j+1}}c_kR^J_k\big\|_2^2
  &\le&   C  \sum_{ J(j)^{-1}N_j <k\le  N_{j+1}J(j)^2  }    c_k^2 \frac{  (\log\log
k)^3}{ \log\log\log k }.
\end{eqnarray*}\end{remark}
\begin{remark} The following set  $B = \big\{k,\ell ;  N<k,\ell \le M:J(k,\ell)\ge    (k\vee\ell)\big\}$ appeared in the last step of the proof.
Concerning the size of such sets, one can show the general bound
 $$\#(B )   \le   CJM (\log JM)^3,$$
where $C$ is absolute. \end{remark}

\section{\bf Proof of Theorem \ref{sw}.}\label{s6} 
Changing
$f$ for
$f/ c$ if necessary, we may   assume for our purpose that
$c=1$ in (\ref{qos}).  Let $G_n=(\g_{k,\ell}  )$, where $\g_{k,\ell}=\int _Ef_k  f_\ell\, \dd x $ denotes the Gram matrix of the
system $f_1, \ldots, f_n$.  As 
$H_n=I-G_n$  is nonnegative definite, there exist in $\R^n$   vectors $u_1,\ldots, u_n$ with Gram matrix $H_n$, for instance the rows of
$H_n^{1/2}$. Given any bounded interval
$Y$, it follows that there exist in $L^2(Y)$ (in fact in any separable Hilbert space),  vectors $v_1,\ldots, v_n$ with Gram matrix
$I-G_n$. By induction (using isometry), it is plain that if   $v_1,\ldots, v_n$ are already chosen with Gram matrix $H_n$, a vector
$v_{n+1}$ can be added so that the new system 
 $v_1, \ldots, v_{n+1}$  will have    Gram matrix $H_{n+1}$.  
    Consequently  there exist $(g_k)$  supported on $Y$ such that $(f_k+ g_k)  $ is an orthonormal system
on
$\T\times Y$. Thus for any
$(c_k)$ universal, the series
$\sum_k c_k (f_k+ g_k)  $ converges a.e. on $\T\times Y$, and thereby converges a.e. on $\T$.  
Since
$g_k\equiv 0$ on $\T$, it follows that
$\sum_k c_k  f_k   $ converges a.e.
 \begin{remark} The construction of  $(g_k)$ is exactly as in the proof of Schur's Lemma (\cite{O}, p.\,56).
\end{remark}
\section{ A strenghtened form of Theorem \ref{p1}}\label{s7}
Our goal in this section is to show that the form of the upper bound provided in Theorem \ref{p1} in fact strongly depends on how the considered GCD quadratic form can be bounded from below. This is a quite  intriguing property, expressed in Corollary \ref{c1a1}, and  which we will study more thoroughly elsewhere. We first establish the following  stronger estimate. 
\begin{theorem} \label{t1a1}Let  $s>1/2$ and  $0\le \tau \le 2s$.  For any finite coefficient sequence $\{c_k, k\in K\}$,
\begin{eqnarray*} 
  & & \Big|\sum_{k,\ell\in K}c_k   c_\ell
 \frac{(k,\ell)^{2s}}{k^s\ell^s} 
-\sum_{ k \in K}  c_k^2 \Big| \cr   &\le&   \Big(\sum_{k  \in K}\frac{|c_k | }{k^s }  \Big)^2 +2 \big(\zeta(2s)\sum_{ k \in K^*}  c_k^2
 \big)^{1/2}\Big( \sum_{k,\ell\in K}|c_k  || c_\ell|
 \frac{(k,\ell)^{2s}}{k^s\ell^s} 
\Big)^{1/2}+ \cr & & \Big(\sum_{u \in F'(K)} \frac{1}{\s_\tau(u) } \Big)\Big(\sum_{\nu \in K}
  c_\nu^2  \s_{ \tau-2s}(\nu) \Big), 
 \end{eqnarray*}
 where $F'(K)=F(K)\backslash \{1\}$ and $K^*=\{\ell \in K; \exists k\in K, k< \ell : k|\ell \}$. \end{theorem}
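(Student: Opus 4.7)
The strategy is to estimate
$L - D := \sum_{k,\ell\in K}c_kc_\ell(k,\ell)^{2s}/(k^s\ell^s) - \sum_{k\in K}c_k^2 = \sum_{k\ne\ell} c_kc_\ell(k,\ell)^{2s}/(k^s\ell^s)$,
the off-diagonal of the GCD quadratic form, by splitting it according to the divisibility structure of the pair $(k,\ell)$ and matching each piece to one of the three terms on the right-hand side. Write $L_{|c|}:=\sum_{k,\ell\in K}|c_k||c_\ell|(k,\ell)^{2s}/(k^s\ell^s)$ for the absolute-value version.

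I would first decompose $L-D = T_1 + T_2$, where $T_1 = 2\sum_{k,\ell\in K,\, k|\ell,\, k<\ell} c_kc_\ell(k/\ell)^s$ collects the divisibility pairs and $T_2$ the non-divisibility pairs. For $T_1$, only $\ell\in K^*$ contributes; pulling $c_\ell$ outside and applying Cauchy--Schwarz yields
\begin{equation*}
|T_1|^2 \le 4\Big(\sum_{\ell\in K^*}c_\ell^2\Big)\sum_{\ell\in K^*}\Big(\sum_{k|\ell,\, k<\ell,\,k\in K}|c_k|(k/\ell)^s\Big)^2.
\end{equation*}
Expanding the inner square, swapping so that $\ell$ runs last, and bounding the $\ell$-sum by the full tail $\sum_{[k,k']|\ell}\ell^{-2s} = \zeta(2s)/[k,k']^{2s}$, the identity $[k,k']=kk'/(k,k')$ converts the right factor into $\zeta(2s)L_{|c|}$. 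This identifies $|T_1|$ with the middle term on the right-hand side.

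For $T_2$ I would use the identity $(k,\ell)^{2s} = 1 + \sum_{d>1,\,d|(k,\ell)}J_{2s}(d)$ to split $T_2 = T_{2,1} + T_{2,>1}$. The $d=1$ piece satisfies $|T_{2,1}| \le \sum_{k,\ell}|c_k||c_\ell|/(k^s\ell^s) = (\sum_k |c_k|/k^s)^2$, which is exactly the first term. For $T_{2,>1}$ I would replicate the Cauchy--Schwarz machinery in the proof of Theorem \ref{p1}-(i) almost verbatim, with one crucial new input: for any non-divisibility pair one has $(k,\ell)<\min(k,\ell)$, so after the substitution $k=ud$, $\ell=vd$ with $d|(k,\ell)$ and $d>1$, both $u$ and $v$ are automatically at least $2$. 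This lets me restrict the outer Cauchy--Schwarz to $u\in F'(K)$, producing the normalising factor $\sum_{u\in F'(K)}1/\s_\tau(u)$ with $\psi_1\equiv 1$; then the Dirichlet convolution identity $J_{2s}*\s_\tau(\nu) = \nu^\tau\s_{2s-\tau}(\nu)$ from the proof of Theorem \ref{p1} delivers the second factor $\sum_\nu c_\nu^2\s_{\tau-2s}(\nu)$. The surplus correction $-J_{2s}(1)\s_\tau(\nu)-J_{2s}(\nu)\s_\tau(1)$ arising from excluding $u=1$ and $u=\nu$ from the convolution is non-negative and can be harmlessly discarded in the upper bound.

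The main obstacle is the $T_{2,>1}$ estimate: one must carefully verify that the non-divisibility constraint $k\nmid\ell$, $\ell\nmid k$ really propagates through the Cauchy--Schwarz step to yield $F'(K)$ rather than $F(K)$ in the outer normalisation. Everything rests on the simple but essential remark that this constraint forces $k/d,\ell/d\ge 2$ for every common divisor $d$; once this is in place, the convolution computation in the proof of Theorem \ref{p1} carries through with only cosmetic modifications.
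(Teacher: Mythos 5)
Your proof is correct and follows essentially the same route as the paper's: the $d=1$ term of the Jordan-totient expansion yields the first bound, the divisor pairs are handled by the Hilberdink-type evaluation of $\sum_{\ell\in K^*}b_\ell^2$ followed by Cauchy--Schwarz to give the middle term, and the remaining contribution is pushed through the double Cauchy--Schwarz and Dirichlet-convolution machinery of Theorem \ref{p1} with the outer index restricted to $F'(K)$. The only difference is bookkeeping: the paper partitions the off-diagonal sum by the totient index $d\in\{1\}\cup\{k,\ell\}\cup(\text{rest})$ uniformly over all pairs, whereas you first separate divisor pairs (assigning them their full weight to the middle term, which is harmless since $J_{2s}(k)\le k^{2s}$) from non-divisor pairs and expand only the latter; your observation that $k\nmid\ell$ and $\ell\nmid k$ force $k/d,\ell/d\ge 2$ for every common divisor $d$ plays exactly the role of the paper's exclusion $d\notin\{1,k,\ell\}$, so the two partitions deliver identical bounds.
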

\begin{remark}In general, $K^*  $ is a significantly smaller set than $K$.  If $K=[a,b]$ with $b> a> 1$, then 
  $K^*\not\subseteq K$, then $a+1\notin K^*$ simply because $a\!\not| a+1$.    The extremal case corresponds to Rudin sets,  namely
sets    of integers  
  none of
which divides the least common multiple   of the
others, in which case $K^* =\emptyset $.
\end{remark}
\begin{corollary}\label{c1a1} Let  $s>1/2$ and  $0\le \tau \le 2s$. Let $\rho =\sum_{ \l \in K}\frac{1}{\l^{2s}}$. Assume $c_k\ge 0$, $k\in
K\}$ and $\rho \le 1/16$.  We have the following alternative. Either
\begin{eqnarray*} 
 \sum_{k,\ell\in K}c_k   c_\ell
 \frac{(k,\ell)^{2s}}{k^s\ell^s}   &\le&\frac{1}{\sqrt \rho}\Big(\sum_{k  \in K}\frac{c_k  }{k^s } +\sum_{u \in F'(K)} \frac{1}{\s_\tau(u) } +\sum_{\nu \in K}
  c_\nu^2  \s_{ \tau-2s}(\nu) \Big)^2, 
 \end{eqnarray*}
or
\begin{eqnarray*} 
 \sum_{k,\ell\in K}c_k   c_\ell
 \frac{(k,\ell)^{2s}}{k^s\ell^s}   &\le& \frac{1}{1-3\sqrt \rho}\sum_{ \ell \in K}  c_\ell^2. 
 \end{eqnarray*} 
\end{corollary}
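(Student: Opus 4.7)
The plan is to start from Theorem \ref{t1a1}, which bounds $|S-A|$ (where I write $S = \sum_{k,\ell\in K} c_k c_\ell (k,\ell)^{2s}/(k^s\ell^s)$ for the GCD form on the left and $A=\sum_{k\in K} c_k^2$ for the diagonal) by $X^2 + 2(\zeta(2s)B)^{1/2}S^{1/2} + YZ$, with the shorthands $X = \sum_k c_k/k^s$, $Y = \sum_{u\in F'(K)} 1/\s_\tau(u)$, $Z = \sum_\nu c_\nu^2 \s_{\tau-2s}(\nu)$, and $B = \sum_{k\in K^*} c_k^2 \le A$. Two preliminary remarks: since $c_k\ge 0$, the diagonal terms of $S$ alone give $A\le S$, hence $B\le S$; and Cauchy--Schwarz applied to $X$ yields the useful bound $X^2\le \rho A$.

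The core idea is a dichotomy on the magnitude of the combined arithmetic quantity $W := X + Y + Z$. Either $W^2 \ge \sqrt\rho\, S$, in which case the first alternative will follow by observing that $A\le S\le W^2/\sqrt\rho$ and $X^2+YZ\le 2W^2$ together dominate everything on the right-hand side of Theorem \ref{t1a1}, yielding $S\le W^2/\sqrt\rho$ once the cross term is absorbed; or $W^2 < \sqrt\rho\, S$, in which case $X^2+YZ<2\sqrt\rho\, S$ can be moved to the left, leaving $(1-2\sqrt\rho) S \le A + 2(\zeta(2s) B)^{1/2} S^{1/2}$. Applying Young's inequality $2(\zeta(2s)BS)^{1/2} \le \sqrt\rho\, S + \zeta(2s)B/\sqrt\rho$ and then the crude estimate $B\le A$ converts this into an inequality of the form $(1-3\sqrt\rho) S \le A(1 + O(\sqrt\rho))$, which rearranges to $S\le A/(1-3\sqrt\rho)$ by the elementary check $(1+2\sqrt\rho)(1-3\sqrt\rho)\le 1-\sqrt\rho$, valid because $\rho\le 1/16$ forces $\sqrt\rho\le 1/4$.

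The main obstacle is the $\zeta(2s)$-factor in the cross term from Theorem \ref{t1a1}: since $\zeta(2s) > 1$ is not small while the target constants involve only $\sqrt\rho$, a naive Young step introduces an unwanted factor $\zeta(2s)/\sqrt\rho$. Overcoming this requires a secondary dichotomy comparing $\zeta(2s)B$ with $\rho S$: when $\zeta(2s)B\le\rho S$ the cross term drops below $\sqrt\rho\, S$ at no cost, while in the reverse regime $\zeta(2s)B>\rho S$ the inequality itself forces $S$ to be of order $\zeta(2s)B/\rho$, and via $B\le A$ one must re-route the estimate into the first alternative by combining with $X^2\le\rho A$ and $YZ\le(Y+Z)^2/4$. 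The hypothesis $\rho\le 1/16$ is precisely what keeps every denominator $1-c\sqrt\rho$ for $c\le 3$ bounded away from zero, and what ultimately pins down the numerical constants $3$ and $1/\sqrt\rho$ in the two alternatives.
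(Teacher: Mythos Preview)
Your overall architecture is right, and your first dichotomy even matches the paper's in spirit, but the argument has a genuine gap exactly where you flag the obstacle: the $\zeta(2s)$ factor in the cross term of Theorem~\ref{t1a1} as stated cannot be eliminated by the secondary dichotomy you sketch. Concretely, in your sub-case $\zeta(2s)B>\rho S$ you obtain only $S<\zeta(2s)A/\rho$; to ``re-route into the first alternative'' you would then need $\zeta(2s)A/\rho\le W^2/\sqrt\rho$, i.e.\ $\zeta(2s)A\le W^2\sqrt\rho$. But $W\ge Z\ge A$ gives at best $W^2\ge A^2$, so you would need $\zeta(2s)\le A\sqrt\rho$, which fails whenever $A$ is small. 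Nor can you re-route into the second alternative, since $\zeta(2s)A/\rho\le A/(1-3\sqrt\rho)$ would force $\zeta(2s)(1-3\sqrt\rho)\le\rho$, which is false for $\rho\le1/16$. So sub-case (b) is genuinely open.

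The paper avoids this entirely by \emph{not} invoking Theorem~\ref{t1a1} in its final form. It goes back to the intermediate inequality~(\ref{ineq}) from the proof, where the cross term still carries the factor $\sum_{\l\in K^*_{[u,v]}}\l^{-2s}$ rather than the crude bound $\zeta(2s)$. Bounding this inner sum by $\rho$ instead yields
\[
|L-\e|\ \le\ X^2+YZ+2(\e L\rho)^{1/2},
\]
with your notation $L=S$, $\e=A$. Now the dichotomy is a single clean cut: fix $h=3\sqrt\rho$ and compare $(1-h)L$ with $\e$. If $(1-h)L<\e$ you get the second alternative immediately. If $(1-h)L\ge\e$, substitute $\e\le(1-h)L$ into the cross term to make it linear in $L$, rearrange, and use $h-2\sqrt{(1-h)\rho}\ge\sqrt\rho$ together with $X^2+YZ\le(X+Y+Z)^2$ to get the first alternative. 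No $\zeta(2s)$ ever appears.
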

   \begin{proof}[Proof of Theorem \ref{t1a1}] From (\ref{HS1a})  and M\"obius inversion formula (\ref{m}),
   \begin{eqnarray*}  
  \sum_{k,\ell\in K}c_k   c_\ell
 \frac{(k,\ell)^{2s}}{k^s\ell^s} &=& \sum_{k,\ell \in K}  \frac{c_k   c_\ell }{k^s\ell^s}\Big\{\sum_{d\in F(K)} 
J_{2s} (d)  {\bf 1}_{d|k} {\bf 1}_{d|\ell}\Big\}  
\cr &=& \sum_{ \ell \in K}  \frac{  c_\ell^2 }{ \ell^{2s}}\Big\{\sum_{d\in F(K)} 
J_{2s} (d)    {\bf 1}_{d|\ell}\Big\}  +\sum_{k,\ell \in K\atop k\not= \ell}  \frac{c_k   c_\ell }{k^s\ell^s}\Big\{\sum_{d\in F(K)} 
J_{2s} (d)  {\bf 1}_{d|k} {\bf 1}_{d|\ell}\Big\}
\cr &=& \sum_{ \ell \in K}    c_\ell^2   +\sum_{k,\ell \in K\atop k\not= \ell}  \frac{c_k   c_\ell }{k^s\ell^s}\Big\{\sum_{d\in F(K)} 
J_{2s} (d)  {\bf 1}_{d|k} {\bf 1}_{d|\ell}\Big\}    .
 \end{eqnarray*}
  We begin with isolating particular values of $d$ in the sum in brackets. 
Observe that the case 
$d=1$  contributes at most for 
\begin{eqnarray}  \label{c2}
 \Big(\sum_{k  \in K}\frac{|c_k | }{k^s }  \Big)^2 &\le & \zeta(2s) \sum_{k  \in K} |c_k|^2   , 
 \end{eqnarray}
 by using Cauchy-Schwarz's inequality. Now if $d=k$ or $d=\ell$, then $J_{2s} (d)  {\bf 1}_{d|k} {\bf 1}_{d|\ell}$ is either
equal to
$$  J_{2s} (k)    {\bf 1}_{k|\ell}\qq {\rm or} \qq  J_{2s} (\ell)    {\bf 1}_{\ell|k } $$
These cases contribute for
\begin{eqnarray*}  2\sum_{ \ell \in K }\sum_{k  \in K\atop k<\ell, k|\ell}  \frac{c_k   c_\ell }{k^s\ell^s} J_{2s} (k)
  .
     \end{eqnarray*}
We have\begin{eqnarray} 
\label{c3} \Big|\sum_{
\ell
\in K }\sum_{k 
\in K\atop k|\ell} 
\frac{c_k   c_\ell }{k^s\ell^s} J_{2s} (k)\Big|&  =  &  \Big|\sum_{ \ell \in K^* }\sum_{k  \in K\atop k|\ell}  c_k   c_\ell\Big(\frac{ k  }{ \ell
}\Big)^s\frac{J_{2s} (k)   }{k^{2s}} \Big|
\cr &\le &  \sum_{ \ell \in K^* }|c_\ell|\sum_{k  \in K\atop k<\ell, k|\ell} |c_k |   \Big(\frac{ k  }{ \ell }\Big)^s\ =\  \sum_{ \ell \in K^*
}|c_\ell| b_\ell ,     
\end{eqnarray}
with 
$$ b_\ell=\sum_{k  \in K\atop k|\ell} |c_k |   \Big(\frac{ k  }{ \ell }\Big)^s.$$  
Operating as in \cite{H}  (proof of Proposition 3.1), 
 \begin{eqnarray*} \sum_{ \ell \in K^*}  b_\ell^2&=& \sum_{ \ell \in K^*}  \sum_{u,v  \in K\atop [u,v]|\ell} | c_u || c_v | 
\frac{ u^s  v^s  }{ \ell^{2s} }   = \sum_{u,v  \in K } |c_u || c_v |\frac{ u^s  v^s  }{ [u,v]^{2s} }\sum_{ \l \in K^*_{[u,v]} } 
 \frac{ 1 }{ \l^{2s} }   .    \end{eqnarray*}
  By the Cauchy-Schwarz inequality,
\begin{eqnarray}  \label{c4}\Big|\sum_{ \ell \in K^*
}c_\ell b_\ell\Big|&\le &\big(\sum_{ \ell \in K^*}  c_\ell^2
 \big)^{1/2}\big(\sum_{ \ell \in K^*}  b_\ell^2
 \big)^{1/2}\cr
& \le &    \Big(\sum_{ \ell \in K^*}  c_\ell^2
 \Big)^{1/2}\Big(\sum_{u,v  \in K }| c_u||  c_v | \frac{ (u,v)^{2s} }{ u^s  v^s  }\sum_{ \l \in K^*_{[u,v]} } 
 \frac{ 1 }{ \l^{2s} } \Big)^{1/2}. \end{eqnarray}
 \vskip 2 pt We next concentrate on the sum 
\begin{eqnarray*}  
 \Sigma:= \sum_{k,\ell \in K}  \frac{c_k   c_\ell }{k^s\ell^s}\Big\{\sum_{d\in F(K)\atop d\notin\{1,k,\ell\}} 
J_{2s} (d)  {\bf 1}_{d|k} {\bf 1}_{d|\ell}\Big\} =\sum_{d\in F'(K) } 
J_{2s} (d) \sum_{k,\ell \in K\atop
k\not=d, \ell\not = d}  \frac{c_k   c_\ell }{k^s\ell^s} {\bf 1}_{d|k} {\bf 1}_{d|\ell}
  . 
 \end{eqnarray*}
  Writing $k=ud$, $\ell=vd$ and noticing that
$u,v\in F'(K)$, we have 
\begin{eqnarray*} 
\Big|\sum_{d\in F'(K) } 
J_{2s} (d) \sum_{k,\ell \in K\atop
k\not=d, \ell\not = d}  \frac{c_k   c_\ell }{k^s\ell^s} {\bf 1}_{d|k} {\bf 1}_{d|\ell}\Big|&\le &\sum_{d\in F'(K) } 
\frac{J_{2s} (d)}{d^{2s}}  \sum_{u,v\in F'(K)} \frac{|c_{ud}||c_{vd} |}{u^sv^s}  
\cr  &=& \sum_{u,v\in F'(K)} \frac{1}{u^sv^s} \Big(\sum_{d\in F'(K)}
\frac{J_{2s} (d)}{d^{2s}}|c_{ud}||c_{vd} |  \Big) . 
 \end{eqnarray*}
By the Cauchy-Schwarz inequality,
$$ \sum_{d\in F'(K)} \frac{J_{2s}
(d)}{d^{2s}}|c_{ud}||c_{vd} |   \le \Big(\sum_{d\in F'(K)} \frac{J_{2s}
(d)}{d^{2s}}c_{ud}^2  \Big)^{1/2}\Big(\sum_{d\in F'(K)} \frac{J_{2s}
(d)}{d^{2s}} c_{vd}^2   \Big)^{1/2}.$$
Hence,
\begin{eqnarray*} 
\Sigma  &\le& \Big[\sum_{u \in F'(K)} \frac{1}{u^s } \Big(\sum_{d\in F'(K)} \frac{J_{2s}
(d)}{d^{2s}}c_{ud}^2  \Big)^{1/2}\Big]^2 . 
 \end{eqnarray*}
Now we continue as in the proof of Theorem \ref{p1}. Let   $\psi $ be a positive arithmetic function. Writing
$$ \frac{1}{u^s }= \frac{1}{ u^{s/2}\psi(u) ^{1/2} }.\,\frac{\psi(u) ^{1/2}}{u^{s/2} }$$
and applying  Cauchy-Schwarz's inequality again gives,
\begin{eqnarray*} 
\Sigma  &\le& \Big(\sum_{u \in F'(K)} \frac{1}{u^s\psi(u) } \Big)\Big(\sum_{u \in F'(K)}
\frac{\psi(u)}{u^s  } \sum_{d\in F'(K)}
\frac{J_{2s} (d)}{d^{2s}}c_{ud}^2  \Big)
\cr &\le& \Big(\sum_{u \in F'(K)} \frac{1}{u^s\psi(u) } \Big)\Big(\sum_{u \in F(K)}
\frac{\psi(u)}{u^s  } \sum_{d\in F(K)}
\frac{J_{2s} (d)}{d^{2s}}c_{ud}^2  \Big)  . 
 \end{eqnarray*}
We note that the first sum is indexed on $F'(K)$ this time. The second sum was already estimated.  We recall that
\begin{eqnarray*} 
\sum_{u \in F(K)}
\frac{\psi(u)}{u^s  } \sum_{d\in F(K)}
\frac{J_{2s} (d)}{d^{2s}}c_{ud}^2    &\le&  \sum_{\nu \in K} \frac{ c_\nu^2}{  \nu^{2s}   }   \sum_{u \in
F(K)\atop u|\nu }
  J_{2s}\big( \frac{\nu}{u   }\big) u^{ s} \psi(u)    , 
 \end{eqnarray*}
and for   $\psi(u) = u^{-s} \s_\tau(u)$ with $\tau\le 2s$ 
 \begin{eqnarray*} 
     \sum_{u \in
F(K)\atop u|\nu }
  J_{2s}\big( \frac{\nu}{u   }\big) u^{ s} \psi(u) &=& \nu^\tau \s_{2s-\tau}(\nu). 
 \end{eqnarray*}  
 So that
\begin{eqnarray*} 
\Sigma  &\le& \Big(\sum_{u \in F'(K)} \frac{1}{\s_\tau(u) } \Big)\Big(\sum_{\nu \in K}
\frac{ c_\nu^2}{  \nu^{2s}   }\nu^\tau \s_{2s-\tau}(\nu) \Big) . 
 \end{eqnarray*}
Finally,\begin{eqnarray} \label{c5}
\Sigma 
&\le& \Big(\sum_{u \in F'(K)} \frac{1}{\s_\tau(u) } \Big)\Big(\sum_{\nu \in K}
  c_\nu^2  \s_{ \tau-2s}(\nu) \Big), 
 \end{eqnarray}
By combining (\ref{c2}), (\ref{c4}), (\ref{c5}) we get
\begin{eqnarray}\label{ineq}  
& &  \Big|\sum_{k,\ell\in K}c_k   c_\ell
 \frac{(k,\ell)^{2s}}{k^s\ell^s} -\sum_{ \ell \in K}    c_\ell^2  \Big|  
    \le   \Big(\sum_{k  \in K}\frac{c_k  }{k^s }  \Big)^2 \cr & & +2 \Big(\sum_{ \ell \in K^*}  c_\ell^2
 \Big)^{1/2}\Big(\sum_{u,v  \in K } c_u  c_v \frac{ (u,v)^{2s} }{ u^s  v^s  } \sum_{ \l \in K^*_{[u,v]} } 
 \frac{ 1 }{ \l^{2s} } \Big)^{1/2}
  \cr & & + \Big(\sum_{u \in F'(K)} \frac{1}{\s_\tau(u) } \Big)\Big(\sum_{\nu \in K}
  c_\nu^2  \s_{ \tau-2s}(\nu) \Big). 
 \end{eqnarray}
 \end{proof}
 \begin{proof}[Proof of Corollary \ref{c1a1}]    Put
 \begin{eqnarray*}\qq \qq \begin{cases}A= \big(\sum_{u \in F'(K)} \frac{1}{\s_\tau(u) } \big)\big(\sum_{\nu \in K}
  c_\nu^2  \s_{ \tau-2s}(\nu) \big)
\cr 
L=\sum_{k,\ell\in K}c_k   c_\ell
 \frac{(k,\ell)^{2s}}{k^s\ell^s}\cr 
 \e =\sum_{ \ell \in K}  c_\ell^2. \end{cases}\end{eqnarray*}
By (\ref{ineq}),
\begin{eqnarray*}
  |L -\e  |  
   & \le &  \Big(\sum_{k  \in K}\frac{c_k  }{k^s }  \Big)^2 
   \cr &\le & +2 \Big(\sum_{ \ell \in K^*}  c_\ell^2
 \Big)^{1/2}\Big(\sum_{u,v  \in K } c_u  c_v \frac{ u^s  v^s  }{ [u,v]^{2s} }\sum_{ \l \in K^*_{[u,v]} } 
 \frac{ 1 }{ \l^{2s} } \Big)^{1/2} + A
  \cr &\le &\Big(\sum_{k  \in K}\frac{c_k  }{k^s }  \Big)^2 +A+\big(L\e\rho)^{1/2}. 
 \end{eqnarray*}
 Let $0<h<1$. Either 
 $(1-h)L< \e$. Thus
$$L\le \frac{1}{1-h}\sum_{ \ell \in K}  c_\ell^2.$$
Or $(1-h)L\ge \e$, and so 
 $$L\le \big(\sum_{k  \in K}\frac{c_k  }{k^s }  \big)^2+(1-h)L+2\big((1-h)\rho)^{1/2}L+A. $$Thus
$$L\big(h-2\big((1-h)\rho)^{1/2}\big)\le  \big(\sum_{k  \in K}\frac{c_k  }{k^s }  \big)^2+A,
$$
Let $h= 3\sqrt \rho$. Then $h-2\big((1-h)\rho)^{1/2}\ge \sqrt \rho $ and  $(1-h)\ge 1/4\ge \sqrt \rho $.  Now
\begin{eqnarray*}
 \big(\sum_{k  \in K}\frac{c_k  }{k^s }  \big)^2+A&= &\big(\sum_{k  \in K}\frac{c_k  }{k^s }  \big)^2+\Big(\sum_{u \in F'(K)} \frac{1}{\s_\tau(u) } \Big)\Big(\sum_{\nu \in K}
  c_\nu^2  \s_{ \tau-2s}(\nu) \Big)\cr
  &\le &  \big(\sum_{k  \in K}\frac{c_k  }{k^s }  \big)^2+\frac{1}{2}\Big(\sum_{u \in F'(K)} \frac{1}{\s_\tau(u) } +\sum_{\nu \in K}
  c_\nu^2  \s_{ \tau-2s}(\nu) \Big)^2
\cr
  &\le &\Big(\sum_{k  \in K}\frac{c_k  }{k^s } +\sum_{u \in F'(K)} \frac{1}{\s_\tau(u) } +\sum_{\nu \in K}
  c_\nu^2  \s_{ \tau-2s}(\nu) \Big)^2.
\end{eqnarray*} 
Hence
$$L\le \frac{1}{\sqrt \rho}\Big(\sum_{k  \in K}\frac{c_k  }{k^s } +\sum_{u \in F'(K)} \frac{1}{\s_\tau(u) } +\sum_{\nu \in K}
  c_\nu^2  \s_{ \tau-2s}(\nu) \Big)^2.$$
\end{proof}
 
\vskip 5 pt \noi {\bf Acknowlegments:} I   thank the Center of Advanced Study from  Oslo and Kristian Seip for  inviting me in April 2013, where this project was initiated. I wish  to thank
  Kristian Seip for discussions around the new approach using analysis on the polydisc implemented in \cite{{ABS}} (and quite recently in  \cite{{BoS}}).
This  motivated me in the project to propose an alternative approach using convolution calculus, and developing somehow the works \cite{{We}, {BW1}}. 
 I  am  pleased to thank  Christophe Cuny for reading a preliminary version of the paper
and for useful comments. I also thank Julien Br\'emont for useful comments.
\vskip 5 pt
{\it Final Note.}   In a very recent work, Lewko and Radziwill (arXiv:1408.2334v1) proposed a new, simple (with no combinatorial argument) and very
informative approach to G\'al's theorem and the recent extensions obtained in
\cite{ABS}. They further applied their results to the convergence almost everywhere of series of dilates of functions in $ {\rm BV}(\T)$, and were able to
reduce the condition $\g>4$ in (\ref{abs4}) to $\g>2$. This naturally include our Theorem \ref{t1a}, but not our Theorem \ref{c1a}  with arithmetical
multipliers. Further, the new argument we introduced in the  proof of Theorem \ref{t1a} suggests a possibility to improve Lewko and Radziwill's
convergence condition by  requiring only that $\sum_{k}  c_k^2\frac{(\log\log k)^2}{(\log\log \log k)^2}  <\infty$. This will be investigated
elsewhere.

We also mention Aistleitner's recent result  (arXiv:1409.6035v1)
deriving from Hilberdink's approach,  a lower bound for the maximum of $| \zeta(\s + it)|$, $0\le t\le T$, $T$  large,   of the same kind as Montgomery well-known result, with slightly better constant. 


\end{document}